\let\frak\mathfrak
\let\Bbb\mathbb
\def\>{\relax\ifmmode\mskip.666667\thinmuskip\relax\else\kern.111111em\fi}
\def\<{\relax\ifmmode\mskip-.333333\thinmuskip\relax\else\kern-.0555556em\fi}
\def\vsk#1>{\vskip#1\baselineskip}
\def\vv#1>{\vadjust{\vsk#1>}\ignorespaces}
\def\vvn#1>{\vadjust{\nobreak\vsk#1>\nobreak}\ignorespaces}
  \let\ssize\scriptstyle
\let\sssize\scriptscriptstyle
\let\Medskip\medskip
\def\medskip{\par\Medskip}
\let\Bigskip\bigskip
\def\bigskip{\par\Bigskip}
\let\Maketitle\maketitle
\def\maketitle{\Maketitle\thispagestyle{empty}\let\maketitle\empty}
\newtheorem{thm}{Theorem}[section]
\newtheorem{cor}[thm]{Corollary}
\newtheorem{lem}[thm]{Lemma}
\numberwithin{equation}{section}
\theoremstyle{definition}
\newtheorem*{rem}{Remark}
\newtheorem*{example}{Example}
\let\mc\mathcal
\let\nc\newcommand
\let\al\alpha
\let\ka\kappa
\let\la\lambda
\let\phi\varphi
\let\Si\Sigma
\let\om\omega
\let\der\partial
\let\geq\geqslant
\let\leq\leqslant
\let\on\operatorname
\let\bi\bibitem
\let\bs\boldsymbol
\def\C{{\mathbb C}}
\def\Z{{\mathbb Z}}
\def\F{{\mc F}}
\def\+#1{^{\{#1\}}}
\def\End{\on{End}}
\def\Res{{\on{Res}}}
\def\sln{\mathfrak{sl}_N}
\def\beq{\begin{equation}}
\def\eeq{\end{equation}}
\def\be{\begin{equation*}}
\def\ee{\end{equation*}}
\nc{\bea}{\begin{eqnarray*}}
\nc{\eea}{\end{eqnarray*}}
\nc{\bean}{\begin{eqnarray}}
\nc{\eean}{\end{eqnarray}}
\nc{\Ref}[1]{{\rm(\ref{#1})}}
\let\ga\gamma
\nc{\Il}{{\mc I_{\bs\la}}}
\nc{\bla}{{\bs\la}}
\nc{\Fla}{\F_\bla}
\nc{\tfl}{{T^*\Fla}}
\nc{\GL}{{GL_n(\C)}}
\nc{\GLC}{{GL_n(\C)\times\C^*}}
\let\sd s 
\def\ddk_#1{\kk_{#1}\<\>\frac\der{\der\<\>\kk_{#1}}}
\def\bul{\mathbin{\raise.2ex\hbox{$\sssize\bullet$}}}
\def\intt{\mathchoice
{\mathop{\raise.2ex\rlap{$\,\,\ssize\backslash$}{\intop}}\nolimits}
{\mathop{\raise.3ex\rlap{$\,\sssize\backslash$}{\intop}}\nolimits}
{\mathop{\raise.1ex\rlap{$\sssize\>\backslash$}{\intop}}\nolimits}
{\mathop{\rlap{$\sssize\<\>\backslash$}{\intop}}\nolimits}}
\let\kk q 
\let\cc c
\let\Ko K
\def\GZ/{Gelfand-Zetlin}
\def\KZ/{{\slshape KZ\/}}
\def\qKZ/{{\slshape qKZ\/}}
\def\XXX/{{\slshape XXX\/}}
\nc{\slnl}{{\sln (\lambda)}}
\nc{\PCN}{{   (\C[x])^N   }}
\nc{\di}{\on{Diag}}
\nc{\dio}{\on{Diag}_0}
\nc{\Mm}{{\mc M}}
\nc{\Nn}{{\mc N}}
\nc{\A}{{\mc C}}
\nc{\PCr}{{  P  (\C[x])^n   }}
\nc{\Pk}{{(\bs{P}^1)^k}}
\nc{\N}{{\Bbb N}}
\nc{\Ll}{{\mc L}}
\nc{\ord}{{\on{ord}\,}}
\newcommand{\OS}{\mathcal {A}}
\def\FF{{\mathcal F}}
\nc{\Sing}{{\on{Sing}\,}}
\nc{\sing}{{\on{Sing}\,}}
\nc{\Hess}{{\on{Hess}}}
\nc{\R}{{\Bbb R}}
\newcommand{\Pee}{{\mathbb P}}
\let\on\operatorname
\nc{\Kk}{{\bs K}}
\nc{\Ap}{{A_\Phi(z)}}
\nc{\ap}{{A_\Phi(z)}}
\nc{\sv}{{\on{Sing}_a V}}
\nc{\cd}{{\C^n-\Delta}}
\nc{\UT}{{U^0}}   
\nc{\Spect}{\on{Spec}\nolimits}
\nc\z{{\bs z}}
\nc\p{{\bs p}}
\nc\q{{\bs q}}
\nc\ttt{{\bs t}}
\nc\OC{{\mc O(C_{\A,a})}}
\nc\Ia{{\mc I}}
\nc\OCx {{\mc O(C_{\A(x),a})}}
\nc\Cs{{(\C^k)^*}}
\nc\OLx{{ \mc O(L_{Y,a}(x))}}
\begin{document}

\hrule width0pt
\vsk->

\title[Potentials of a family of arrangements of hyperplanes ]
{Potentials of a family of arrangements of hyperplanes and elementary subarrangements }

\author
[Andrew Prudhom and Alexander Varchenko]
{Andrew Prudhom and  Alexander  Varchenko$\>^\diamond$}

\maketitle

\begin{center}

\vsk.5>
{\it Department of Mathematics, University
of North Carolina at Chapel Hill\\ Chapel Hill, NC 27599-3250, USA\/}

\end{center}

{\let\thefootnote\relax
\footnotetext{\vsk-.8>\noindent
AMS Subject Classification: 32S22, 53D45, 14N20
\\
$^\diamond\<${\sl E\>-mail}: \enspace anv@email.unc.edu\>,
supported in part by  NSF grant DMS-1362924
          and Simons Foundation grant \#336826.}}

\medskip

\begin{abstract}
We consider  the Frobenius algebra of functions on the critical set of the master function of a weighted arrangement of hyperplanes in $\C^k$ with normal crossings.  We construct two {\it potential} functions 
(of first and second kind) of variables labeled by hyperplanes of the arrangement and prove that the matrix coefficients of the Grothendieck residue bilinear form on the algebra are given by the $2k$-th derivatives of the potential function of first kind and the matrix coefficients of the multiplication operators on the algebra are given by the $(2k+1)$-st derivatives of the potential function of second kind.
Thus the two potentials completely determine the Frobenius algebra. The presence of these potentials is a manifestation of a Frobenius like structure similar to the Frobenius manifold structure.

We introduce the notion of an elementary subarrangement of an arrangement with normal crossings. 
It turns out that our potential functions are local in the sense that the potential functions are sums of contributions from elementary subarrangements of the given arrangement. This is a new phenomenon of  locality of the Grothendieck residue bilinear form and multiplication on the algebra.

It is known that this Frobenius algebra of functions on the critical set is isomorphic
to the Bethe algebra of this arrangement. (That Bethe algebra is an analog of the Bethe algebras in the theory of quantum integrable models.) Thus our potential functions describe that Bethe algebra too. 

\end{abstract}


\setcounter{footnote}{0}
\renewcommand{\thefootnote}{\arabic{footnote}}

\section{Introduction}
It is well known that the algebra of functions on the set of solutions of the Bethe ansatz equations plays an important role in the study of quantum integrable systems since in many cases the algebra of functions is isomorphic to the Bethe algebra of Hamiltonians of the system,
see for example \cite{NS, MTV1, GRTV, R}.  
 An interesting problem is to describe the algebra.
In this paper we consider the model case of the algebra of functions on the critical set of the master function associated with a family of arrangements with normal crossings. 
Such algebras appear in the KZ-Gaudin type integrable systems, see for example \cite{SV,RV}.
We describe the algebra of functions on the critical set together with the Grothendieck residue
bilinear form in terms of derivatives of two potential functions in the spirit of Frobenius structures.

\subsection{Statement of results}
\label{Sr}
Denote $J=\{1,\dots,n\}$. 
Consider $\C^n\times\C^k$ with coordinates $(z,t)=(z_1,\dots,z_n,t_1,\dots,t_k)$ and  the projection
$\tau :\C^n\times\C^k \to \C^n$.
Fix $n$ nonzero linear functions on $\C^k$,
$g_j=b_j^1t_1+\dots + b_j^kt_k,$\ $ j\in J$.  Assume that  $\{g_j\}_{ j\in J}$
span the dual space $(\C^k)^*$.
Define the functions $f_j = g_j + z_j$ on $\C^n\times\C^k$. We obtain on $\C^n\times\C^k$ an arrangement $\A=\{H_j\}_{j=1}^n$, where $H_j$ is the zero set of $f_j$.  
Let $U(\A): =\C^n\times \C^k-\cup_{j\in J}H_j$ be the complement.
For every $x\in\C^n$, the arrangement $\A$ restricts to an 
arrangement $\A(x)$ on $\tau^{-1}(x)\cong\C^k$
with the complement $U(\A(x)):=\tau^{-1}(x) \cap U(\A)$.
For almost all $x\in\C^k$ the arrangement $\A(x)$ is 
with normal crossings. The subset $\Delta\subset\C^n$, where
this does not hold, is a hypersurface and is called the discriminant.

A set $I=\{i_1,...,i_k\}\subset J$ is called independent if
 $g_{i_1},...,g_{i_k}$ are linearly independent. Denote
$J^{\on{ind}}$ the set of all independent $k$-element subsets of $J$.

Let $a=(a_1,...,a_n)\in(\C^*)^n$ be a system of { weights}
such that for any $x\in \C^n-\Delta$ the weighted arrangement
$(\A(x),a)$ is {unbalanced}, see Section \ref{IsCrPt}, e.g. $a\in\R_{>0}^n$ is unbalanced,
also a generic system of weights is     unbalanced.
The {\it master function} of the weighted arrangement $(\A,a)$
is
\begin{eqnarray}
\label{4.12}
\Phi_{\A,a}(z,t):={\sum}_{i\in J}a_i\log f_i.
\end{eqnarray}
For $x\in\C^n-\Delta$ all critical points of $\Phi_{\A, a}|_{z=x}$ 
with respect to the variables $t$, are isolated,
and the sum $\mu$ of their Milnor numbers is independent of 
the unbalanced weight $a$ and the parameter $x\in\C^n-\Delta$. 
The main object of this paper is the $\mu$-dimensional algebra
\bean
\label{mo}
\mc O(C_{\A(x),a}): = \mc O(U(\A(x)))\Big/\Big(\frac{\der\Phi_{\A,a}}{\der t_j}\, \Big|\,
j=1,...,k\Big)
\eean
of functions on the critical set of the master function $\Phi_{\A,a}\vert_{z=x}$, 
see Section \ref{Cr set}. Define 
\bean
\label{def p}
p_j:=\Big[\frac{a_j}{f_j}\Big]= \Big[\frac{\der \Phi_{\A,a}}{\der z_j}\Big] \in \mc O(C_{\A(x),a}),
\qquad j\in J.
\eean
The elements $\{p_j\}_{ j\in J}$ generate $\mc O(C_{\A(x),a})$ as an algebra. 
The elements  $\{p_{i_1}\cdots p_{i_k}\}_{\{i_1,\dots,i_k\}\in J^{\on{ind}}}$ generate
$\mc O(C_{\A(x),a})$ as a vector space. 
 The Grothendieck residue 
defines a nondegenerate  bilinear form $(\,,\,)_{C_{\A(x),a}}$ on 
$\mc O(C_{\A(x),a})$. 
 The algebra $(\mc O(C_{\A(x),a}),(\,,\,)_{C_{\A(x),a}})$
is a Frobenius algebra. 

The main result of this paper is a construction of
two functions $P$, $Q$ on $\C^n-\Delta$ called the 
{\it potentials of first and second kind}, respectively. The potentials  have the following properties.
 
\begin{thm}
\label{introT}
 Let $x\in\C^n-\Delta$. Then for 
 any two  independent subsets $\{i_1,\dots,i_k\}, \{l_1,\dots,l_k\}$ $ \subset J$ and any $i_0\in J$, we have
\bean
\label{Q11F}
(p_{i_1}\cdots p_{i_k}, p_{l_1}\cdots p_{l_k})_{C_{\A(x),a}}
=
(-1)^k
 \frac{\der^{2k}P}{\der z_{i_1}\dots\der z_{i_k}\der z_{l_1}\dots\der z_{l_k}}(x),
\eean
\bean
\label{Q12F}
(p_{i_0}p_{i_1}\cdots p_{i_k}, p_{l_1}\cdots p_{l_k})_{C_{\A(x),a}}
=
(-1)^k
 \frac{\der^{2k+1}Q}{\der z_{i_0}\der z_{i_1}\dots\der z_{i_k}\der z_{l_1}\dots\der z_{l_k}}(x).
\eean
\end{thm}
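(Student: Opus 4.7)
The strategy is to construct the potentials $P$ and $Q$ via explicit formulas in terms of the arrangement data, then verify the derivative relations by direct computation exploiting the key identity $p_j = \partial \Phi_{\A,a}/\partial z_j$ built into the definition of $p_j$. Because every partial derivative $\partial/\partial z_j$ applied to $\Phi_{\A,a}$ (evaluated at a critical point) produces a factor of $p_j$ by the chain rule, iterating $2k$ differentiations with respect to $z_{i_1},\ldots,z_{i_k},z_{l_1},\ldots,z_{l_k}$ naturally produces an expression built from the product $p_{i_1}\cdots p_{l_k}$, matching the shape of the Grothendieck pairing $(p_I,p_L)_{C_{\A(x),a}}=\sum_\alpha p_I(t^{(\alpha)})\>p_L(t^{(\alpha)})/\det H^{(\alpha)}$ on the Morse critical locus $\{t^{(\alpha)}(x)\}$.

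First I would rewrite the pairing via the Euler-Jacobi (Grothendieck) residue formula and observe that $(p_I,p_L)$ depends only on the combined multiset $I\sqcup L$, hence is automatically symmetric under all permutations of the $2k$ indices, i.e.\ satisfies the integrability condition necessary for a potential $P$ to exist. The task is then to make the construction of $P$ explicit. I would start with an ansatz of the form $P(x)=\sum_\alpha\Psi(\Phi^{(\alpha)}(x),H^{(\alpha)}(x),\ldots)$, where $\Phi^{(\alpha)}$ is the critical value and $\Psi$ is chosen so that the Hessian weighting $1/\det H^{(\alpha)}$ emerges after $2k$ differentiations. The locality statement in the abstract strongly suggests that a successful candidate in fact has the form of a sum indexed by independent $k$-subsets $I\in J^{\on{ind}}$, with each summand an elementary, explicitly computable contribution associated to the corresponding elementary subarrangement; verifying the theorem then reduces to a computation on each elementary piece.

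The principal obstacle is bridging the gap between the naive trace $\sum_\alpha p_j(t^{(\alpha)})$, which is what appears when one simply differentiates $\sum_\alpha\Phi^{(\alpha)}$ in $z$, and the Hessian-weighted trace $\sum_\alpha p_j(t^{(\alpha)})/\det H^{(\alpha)}$ appearing in the Grothendieck pairing. The $1/\det H$ factor must arise organically from the derivative calculus, presumably through implicit differentiation $\partial_{z_l}t^{(\alpha)}=-H^{-1}\partial_t p_l|_{t^{(\alpha)}}$, which injects factors of $H^{-1}$, combined with careful bookkeeping of how the $2k$ derivatives can distribute across the various pieces of the ansatz. Once $P$ is chosen correctly, the argument for $Q$ is parallel: since $(p_{i_0}f,g)_{C_{\A(x),a}}=(f,p_{i_0}g)_{C_{\A(x),a}}$ is again a symmetric bilinear form on the Frobenius algebra, the same generating-function argument yields $Q$ with one additional $\partial/\partial z_{i_0}$ derivative encoding multiplication by $p_{i_0}$. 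Throughout, the relations $\sum_j b_j^m p_j=0$ arising from $\partial\Phi_{\A,a}/\partial t_m=0$ in the Jacobian algebra are essential for reducing products of $p_j$'s over dependent $(k+1)$-subsets to combinations over independent $k$-subsets, so that the formulas on independent subsets suffice to determine the entire Frobenius algebra structure on $\mc O(C_{\A(x),a})$.
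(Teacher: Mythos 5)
Your proposal is a strategy outline rather than a proof, and the two essential ingredients of the theorem are missing from it: the potentials $P$ and $Q$ are never actually constructed, and the derivative identities are never verified. You propose an ansatz $P(x)=\sum_\alpha\Psi(\Phi^{(\alpha)}(x),H^{(\alpha)}(x),\dots)$ built from critical values and Hessians, and you yourself name the ``principal obstacle'' --- producing the $1/\det H^{(\alpha)}$ weight of the Grothendieck pairing from $z$-derivatives of such an ansatz --- without resolving it. That obstacle is real, and the paper avoids it entirely by never computing the residue pairing on the critical locus at all: Theorem \ref{1Mn thm} transports the problem to the flag space, identifying $(\,,\,)_{C_{\A(x),a}}$ with $(-1)^k S^{(a)}$ on $\Sing_a\FF^k(\A(x))$, after which everything is finite-dimensional linear algebra with no Hessians in sight. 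The heart of the paper's argument is then Theorem \ref{thmORT}, the explicit formula $\pi(F_I)=\sum_{\A_{J_\la}(x)\in E_I}\frac{1}{a(J_\la,J)}\,s(\A_{J_\la}(x))$ for the orthogonal projection as a weighted sum of singular elements of elementary subarrangements; combined with the completely explicit combinatorial prepotentials \Ref{1aux}, \Ref{2ell} (polynomials, resp.\ polynomial times logarithm, in the linear forms $f_{\A_{J_\la},h}(z)$ --- not functions of critical values), the matching of both sides of \Ref{Q11F} and \Ref{Q12F} becomes a term-by-term computation (Lemmas \ref{lemH}, \ref{ldec}, \ref{lem0}). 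None of this machinery, nor any substitute for it, appears in your proposal.

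Two further points. First, your claim that symmetry of $(p_I,p_L)$ under permutations of the $2k$ indices is ``the integrability condition necessary for a potential $P$ to exist'' is only a necessary condition; existence of a single function whose mixed partials realize all these values simultaneously (including for overlapping index sets, and compatibly with the relations \Ref{pREL}) is a genuinely nontrivial statement --- the paper notes that even local existence was only established in \cite{HV} via a separate combinatorial study. Second, for \Ref{Q12F} the left-hand side involves a product of $k+1$ generators, which is \emph{not} constant in $x$; reducing it to the spanning set $\{w_{i_1,\dots,i_k}\}$ requires the identity of Corollary \ref{cor prod} involving the linear form $f_{i_0,\dots,i_k}(z)$, which is exactly where the logarithm in $Q$ comes from. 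Your remark that the Frobenius property ``yields $Q$ with one additional derivative'' does not engage with this. As it stands the proposal correctly anticipates the locality phenomenon and the role of the relations $\sum_j d_{j,s_1,\dots,s_{k-1}}p_j=0$, but it does not contain a proof.
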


Formula \Ref{Q11F} determines the Grothendieck residue bilinear form $(\,,\,)_{C_{\A(x),a}}$ in terms of the potential of  first kind. Formula
\Ref{Q12F} determines the operators of multiplication by generators $\{p_j\}_{j\in J}$ in terms of the potential of second kind.

\begin{example} For the arrangement of four lines shown in Figure 1 and given by equations
$t_2+        z_1=0$,
$  t_2+        z_2=0$,
$   t_1+z_3=0$,  $  t_1+t_2+z_4=0$ we have
\begin{figure}
  \begin{tikzpicture}
 
\draw[thick] (.5,2) -- (.5,0) node[anchor=west,yshift=5pt] {$H_{3}$};
\draw[thick] (0,2) -- (2,0) node[anchor=west,yshift=5pt] {$H_{4}$};
\draw[thick] (-.5,0.75) -- (2.5,0.75) node[anchor=west] {$H_{2}$ };
\draw[thick] (-.5,1.25) -- (2.5,1.25) node[anchor=west] {$H_{1}$};

\end{tikzpicture}
\caption{}
\end{figure}
\bea
&&
P = \frac1{a_1+a_2+a_3+a_4}
\Big(a_1a_3a_4 \frac{(z_1+z_3-z_4)^4}{4!}
+ a_2a_3a_4 \frac{(z_2+z_3-z_4)^4}{4!}
\\
\notag
&&
\phantom{aaaaaaaaaaaaaaaaaaa}
+ \frac{a_1a_2a_3a_4}{a_3+a_4} 
 \frac {(z_1-z_2)^2}{2!}\frac{(z_1+z_3-z_4)^2}{2!} \Big),
\eea
\bea
&&
Q =
a_1a_3a_4 \ln(z_1+z_3-z_4)\frac{(z_1+z_3-z_4)^4}{4!}
+ a_2a_3a_4\ln(z_2+z_3-z_4) \frac{(z_2+z_3-z_4)^4}{4!}
\\
\notag
&&
\phantom{aaaaaaaaaaaaaa}
+ \frac{a_1a_2a_3a_4}{a_3+a_4} 
 \ln(z_1-z_2)\frac {(z_1-z_2)^2}{2!}\frac{(z_1+z_3-z_4)^2}{2!} .
\eea
Theorem \ref{introT}  in particular says that $(p_1p_3, p_2p_4)_{C_{\A(x),a}} = \frac 
{a_1a_2a_3a_4}
{(a_1+a_2+a_3+a_4)(a_3+a_4)}$ and it does not depend on $x\in\C^n-\Delta$, and
$(p_4p_1p_3,p_3p_4)_{C_{\A(x),a}}$ 
$= 
\frac{a_1a_3a_4}{z_1+z_3-z_4}$.
\end{example}

In this example the potentials are sums of terms corresponding to subarrangements 
consisting of three or four lines. It turns out that this is the general case. In  Section \ref{sdef}
we introduce the notion of an elementary arrangement in $\C^k$ of type $\la=(\la_1,\dots,\la_m)$, $\la_h\in\Z_{>0}$, $\la_1+\dots+\la_m=k$.  In particular, such an elementary arrangement consists of 
$k+m$ hyperplanes, and  an elementary arrangement in $\C^k$
has at most $2k$ hyperplanes. 
We show that the potentials are sums, over all elementary subarrangements, of the prepotentials of the subarrangements taken with suitable weights,  see Corollary \ref{corP1} and Theorem   \ref{thmQ1}.
The fact that the potentials are sums of contributions from elementary subarrangements indicates a new phenomenon of {\it locality} of the Grothendieck residue bilinear form and multiplication on $\mc O(C_{\A(x),a})$.

The existence of the potentials of first and second kind locally on $\C^n-\Delta$ 
was established in \cite{HV}.

\subsection{Frobenius like structure of order $(n,k,m)$ }  
\label{s1.2}
The potential of the second kind is an analog of the potential in the theory of Frobenius manifolds.
A Frobenius manifold is a manifold with a flat metric and a Frobenius algebra structure on  tangent spaces 
at points of the manifold such that the structure constants of multiplication are given by third derivatives
of a potential function on the manifold with respect to  flat coordinates, see \cite{D, M}.
As an analogy of that, for  our family of arrangements the structure constants of multiplication are given by $2k+1$-st derivatives of the potential of second kind, see Theorem \ref{introT}.

The notion of potentials of a family of arrangements was introduced and studied in \cite{V6, V9, HV}. In \cite{V6} the potentials were constructed for the families of {\it generic arrangements}, that is,  
such that the linear functions $g_{i_1},\dots,g_{i_k}$ are linearly independent for any distinct $i_1,\dots,i_k\in J$.
In  \cite{V6, V9, HV}  different axiomatizations of the structure leading to the existence of the potentials 
 were given. In particular in \cite{HV} Frobenius like structures of order $(n,k,m)$ were introduced.
Our case of a family of arrangements corresponds to the case of order $(n,k,2)$.   Under the axioms of \cite{HV} the existence of
the potential of second kind was deduced in \cite{HV} from
a surprising  elementary study of finite sets of vectors    in a finite-dimensional vector space $W$. Given a natural number $m$  and a finite set  $\{w_i\}$ of vectors, a necessary and sufficient condition was  given
to find in the set $\{w_i\}$ $m$ bases of  $W$.  If $m$ bases in the set $\{w_i\}$  are selected,  then  some elementary transformations of such  a selection are defined. It was shown in \cite{HV} that any two selections are connected by a sequence of elementary transformations. These structures are  fundamental and one may expect a matroid version of them.

\subsection{Bethe algebra} Given a family of weighted arrangements in  $\C^k$ as in Section \ref{Sr}, one considers the Gauss-Manin differential equations for associated $k$-dimensional hypergeometric integrals,
$\ka\frac{\der I}{\der z_j}(z)= K_j(z)I(z)$, $j\in J$, $z\in\C^n-\Delta$,  where $K_i(z)$ are suitable linear operators on the space of singular vectors $\Sing_aV$,  see Section \ref{Constrn}. For every $x\in\C^n-\Delta$, the operators $K_j(x), j\in J$, commmute and are symmetric with respect to the contravariant bilinear  form $S^{(a)}$
on  $\Sing_aV$.  The   unital subalgebra of $\End(\Sing_aV)$ generated by the operators
 $K_j(x), j\in J$,  is called the Bethe algebra of the weighted arrangement $(\A(x),a)$. This algebra is the analog of the Bethe algebra in the theory of quantum integrable systems, see \cite{V5}.
It is known that the Bethe algebra together with  the bilinear form
$S^{(a)}$ is isomorphic to the pair consisting of the algebra of multiplication operators on $\mc O(C_{\A(x),a})$ and
the Grothendieck residue bilinear form $(\,,\,)_{C_{\A(x),a}}$.  Thus Theorem \ref {introT} gives us a description of the Bethe algebra in terms of the derivatives of the potential functions, see Theorem \ref{thmP1}  and Corollary \ref{corQ}.

Our construction of potential functions is based on the isomorphism of the Bethe algebra and the algebra of functions on the critical set.

The Bethe algebra of our family of arrangements is a toy example of the Bethe algebra 
of a quantum integrable system. One may expect to determine glimpses of Frobenius 
like structures in the Bethe algebras of standard quantum integrable systems. 

\subsection{Exposition of material} In Section \ref{ARR} we remind general facts about arrangements. In Section \ref{secT Tr}
we consider families of arrangements. In Section \ref{EA} we introduce elementary arrangements and define  
potential functions. In Section \ref{OPR} we prove an important formula for the orthogonal projection $\pi : V\to \Sing_aV$
with respect to the bilinear form $S^{(a)}$.  Based on that formula we prove the first part of Theorem \ref{introT} in Section \ref{PT1} and the second part of Theorem \ref{introT} in Section \ref{PT2}.

\medskip
The second author thanks the MPI in Bonn for hospitality during his visit in 2015-2016,
C.\,Hertling and B.\,Dubrovin for useful discussions. We also thank C.\,Hertling for indicating a mistake in the initial draft of the paper.

\section{Arrangements}
\label{ARR}

\subsection{Affine arrangement}
\label{AFF}

Let $k,n$ be positive integers, $k<n$. Denote $J=\{1,\dots,n\}$.

Consider the complex affine space $\C^k$ with coordinates $t_1,\dots,t_k$.
Let $\A =(H_j)_{j\in J}$,  be an arrangement of $n$ affine hyperplanes in
$\C^k$. Denote $U(\A) = \C^k - \cup_{j\in J} H_j$,
the complement.
An {\it edge} $X_\al \subset \C^k$ of $\A$ is a nonempty intersection of some
hyperplanes  of $\A$. Denote by
 $J_\al \subset J$ the subset of indices of all hyperplanes containing $X_\al$.
Denote $l_\al = \mathrm{codim}_{\C^k} X_\al$.
We assume that $\A$ is {\it essential}, that is, $\A$ has a vertex, an edge which is a point.

An edge is called {\it dense} if the subarrangement of all hyperplanes containing
it is irreducible: the hyperplanes cannot be partitioned into nonempty
sets so that, after a change of coordinates, hyperplanes in different
sets are in different coordinates. In particular, each hyperplane of
$\A$ is a dense edge.

\subsection{Orlik-Solomon algebra}
Define complex vector spaces $\OS^p(\A)$, $p = 0,  \dots, k$.
 For $p=0$, we set $\OS^p(\A)=\C$. For  $p \geq 1$,\
 $\OS^p(\A)$   is generated by symbols
$(H_{j_1},...,H_{j_p})$ with ${j_i}\in J$, such that
\ (i) $(H_{j_1},...,H_{j_p})=0$
if $H_{j_1}$,...,$H_{j_p}$ are not in general position, that is, if the
intersection $H_{j_1}\cap ... \cap H_{j_p}$ is empty or
 has codimension
 less than $p$;\
(ii)
$ (H_{j_{\sigma(1)}},...,H_{j_{\sigma(p)}})=(-1)^{|\sigma|}
(H_{j_1},...,H_{j_p})
$
for any element $\sigma$ of the
symmetric group $ \Sigma_p$;
\ (iii)
$\sum_{i=1}^{p+1}(-1)^i (H_{j_1},...,\widehat{H}_{j_i},...,H_{j_{p+1}}) = 0
$
for any $(p+1)$-tuple $H_{j_1},...,H_{j_{p+1}}$ of hyperplanes
in $\A$ which are
not in general position and such that $H_{j_1}\cap...\cap H_{j_{p+1}}\not = \emptyset$.

The direct sum $\OS(\A) = \oplus_{p=1}^{N}\OS^p(\A)$ is the {\it Orlik-Solomon}
 algebra with respect to  multiplication
 $ (H_{j_1},...,H_{j_p})\cdot(H_{j_{p+1}},...,H_{j_{p+q}}) =
 (H_{j_1},...,H_{j_p},H_{j_{p+1}},...,H_{j_{p+q}})$.

\subsection{Aomoto complex }
\label{sec wts}
 Fix a point $a=(a_1,\dots,a_n)\in (\C^\times)^n$ called the {\it weight}.
Then the arrangement $\A$ is {\it weighted}:\  for $j\in J$, we assign weight $a_j$ to hyperplane $H_j$.
For an edge $X_\al$, define its weight
$a_\al = \sum_{j\in J_\al}a_j$.  We denote $a_J = {\sum}_{j\in J} a_j$ and 
  $\om^{(a)} = \sum_{j\in J} a_j\cdot (H_j)\ \in  \OS^1(\A)$.
 Multiplication by $\om^{(a)}$ defines the differential
 $ d^{(a)} :   \OS^p(\A) \to \OS^{p+1}(\A)$,  $x  \mapsto \om^{(a)}\cdot x$,
   on  $\OS(\A)$.

\subsection{Flag complex, see \cite{SV} }

 For an edge $X_\alpha$, \ $l_\alpha=p$, a {\it flag} starting at $X_\alpha$ is a sequence
$ X_{\alpha_0}\supset
X_{\alpha_1} \supset \dots \supset X_{\alpha_p} = X_\alpha $
of edges such that  $ l_{\alpha_j} = j$ for $j = 0, \dots , p$.
 For an edge $X_\alpha$,
 we define $(\overline{\FF}_{\alpha})_\Z$  as  the free $\Z$-module generated by the elements
$\overline{F}_{{\alpha_0},\dots,{\alpha_p}=\alpha}$
 la\-bel\-ed by the elements of
the set of all flags  starting at $X_\alpha$.
We define  $(\FF_{\alpha})_\Z$ as the quotient of
$(\overline{\FF}_{\alpha})_\Z$ by the submodule generated by all
the elements of the form
\bean
\label{rlF}
{\sum}_{X_\beta,\
X_{\alpha_{j-1}}\supset X_\beta\supset X_{\alpha_{j+1}}}\!\!\!
\overline {F}_{{\alpha_0},\dots,
{\alpha_{j-1}},{\beta},{\alpha_{j+1}},\dots,{\alpha_p}=\alpha}\, .
\eean
Such an element is determined  by  $j \in \{ 1, \dots , p-1\}$ and
an incomplete flag $X_{\alpha_0}\supset...\supset
X_{\alpha_{j-1}} \supset X_{\alpha_{j+1}}\supset...\supset
X_{\alpha_p} = X_\alpha$ with $l_{\alpha_i}$ $=$ $i$.

We denote by ${F}_{{\alpha_0},\dots,{\alpha_p}}$ the image in $(\FF_\alpha)_\Z$ of the element
$\overline{F}_{{\alpha_0},\dots,{\alpha_p}}$.  For $p=0,\dots,k$, we set
$({\FF}^p(\A))_\Z = \oplus_{X_\alpha,\, l_\alpha=p}\, ({\FF}_{\alpha})_\Z$,\
${\FF}^p(\A) = ({\FF}^p(\A))_\Z\otimes\C$,\
$\FF(\A) = \oplus_{p=1}^{N}\FF^p(\A)$.
We define the differential $d_\Z : (\FF^p(\A))_\Z \to (\FF^{p+1}(\A))_\Z$ by
\bean
\label{d in F}
d_\Z\  :\   {F}_{{\alpha_0},\dots,{\alpha_p}} \ \mapsto \
{\sum}_{X_{\beta},\
X_{\alpha_{p}}\supset X_{\beta}}
 {F}_{{\alpha_0},\dots,
{\alpha_{p}},{\beta}} ,
\eean
$d^2_\Z=0$. Tensoring $d_\Z$  with $\C$, we obtain the differential $d : \FF^p(\A) \to \FF^{p+1}(\A)$.
In particular, we have
$H^p(\FF(\A),d) = H^p((\FF(\A))_\Z,d_\Z)\otimes \C$.

We have  $H^p(\FF(\A),d)=0$ for $p\ne k$ and
$\dim H^k(\FF(\A),d)=|\chi(U(\A))|$, where $\chi(U(\A))$ is the Euler characteristic  of the complement $U(\A)$, see \cite[Corollary 2.8]{SV}

\subsection{Duality}
The vector spaces $\OS^p(\A)$ and $\FF^p(\A)$ are dual,  see \cite{SV}.
The pairing $ \OS^p(\A)\otimes\FF^p(\A) \to \C$ is defined as follows.
{}For $H_{j_1},...,H_{j_p}$ in general position, set
$F(H_{j_1},...,H_{j_p})=F_{{\alpha_0},\dots,{\alpha_p}}$, where
$X_{\alpha_0}=\C^k$, $X_{\alpha_1}=H_{j_1}$, \dots ,
$X_{\alpha_p}=H_{j_1} \cap \dots \cap H_{j_p}$.
Then we define $\langle (H_{j_1},...,H_{j_p}), F_{{\alpha_0},\dots,{\alpha_p}}
 \rangle = (-1)^{|\sigma|},$
if $F_{{\alpha_0},\dots,{\alpha_p}}
= F(H_{j_{\sigma(1)}},...,H_{j_{\sigma(p)}})$ for some $\sigma \in S_p$,
and $\langle (H_{j_1},...,H_{j_p}), F_{{\alpha_0},\dots,{\alpha_p}} \rangle = 0$ otherwise.

An element $F \in \FF^k(\A)$ is called  {\it singular}  if
$F$ annihilates the image of the map
$d^{(a)} :   \OS^{k-1}(\A) \to \OS^{k}(\A)$,  see \cite{V4}.
Denote by
$\on{Sing}_a\FF^k(\A) \subset \FF^k(\A)$  the subspace of all singular vectors.

\subsection{Contravariant map and form, see \cite{SV}}
 The weights  $a$ determines the {\it contravariant  map}
 \bean
 \label{S map}
  \mathcal S^{(a)} : \FF^p(\A) \to \OS^p(\A),
  \quad
  {F}_{{\alpha_0},\dots,{\alpha_p}} \mapsto
\sum  a_{j_1} \cdots a_{j_p} (H_{j_1}, \dots , H_{j_p})\,,
\eean
 where the sum is taken over all $p$-tuples $(H_{j_1},...,H_{j_p})$ such that
$H_{j_1} \supset X_{\al_1}$,\dots , $ H_{j_p}\supset X_{\alpha_p}$.
Identifying $\OS^p(\A)$ with $\FF^p(\A)^*$, we consider
the map  as a bilinear form,
$S^{(a)} : \FF^p(\A) \otimes \FF^p(\A) \to \C$.
The bilinear form is
called the {\it contravariant form}.
The contravariant form  is symmetric.
The contravariant map \Ref{S map} defines a homomorphism of complexes
$\mc S^{(a)} :(\FF(\A),d)\to (\OS(\A),d^{(a)})$, see {\cite[Lemma 3.2.5]{SV}}.

\subsection{Generic weights}
\label{secGnW}

\begin{thm}[{\cite[Theorem 3.7]{SV}}]
\label{thmShap}
If the weight $a$ is such that none of the dense edges has  weight zero, then
the contravariant form is nondegenerate. In particular, we have  an isomorphism
of complexes 
$\mc S :(\FF(\A),d)\to (\OS(\A),d^{(a)})$. 
\qed
\end{thm}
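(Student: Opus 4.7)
The plan is to establish nondegeneracy of the contravariant form $S^{(a)}$ on each graded piece $\FF^p(\A)$ by proving a product formula for its determinant in a natural basis. Concretely, I would show that in a no-broken-circuit (nbc) basis,
\begin{equation*}
\det S^{(a)}\big|_{\FF^p(\A)} \;=\; \prod_{X_\alpha \text{ dense}} a_\alpha^{m_\alpha^p}
\end{equation*}
for certain strictly positive integer exponents $m_\alpha^p$. Given this, the hypothesis that no dense edge has weight zero forces the determinant to be nonzero, hence $\mc S^{(a)} : \FF^p(\A) \to \OS^p(\A)$ is an isomorphism for every $p$. Combined with the chain-map property of $\mc S^{(a)}$ already recorded in \cite[Lemma 3.2.5]{SV}, this upgrades to the desired isomorphism of complexes $\mc S : (\FF(\A), d) \to (\OS(\A), d^{(a)})$.

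To prove the determinant identity, I would proceed by induction on $n = |\A|$ via deletion-restriction. Fix a hyperplane $H_n$ whose removal preserves essentiality and set $\A' = \A \setminus \{H_n\}$ and $\A'' = \{H \cap H_n : H \in \A'\}$, regarded as an arrangement in $H_n \cong \C^{k-1}$. There is a short exact sequence of flag complexes
\begin{equation*}
0 \to \FF(\A') \to \FF(\A) \to \FF(\A'')[-1] \to 0,
\end{equation*}
and its dual on the $\OS$ side, both compatible with the contravariant maps. Choose the nbc basis so that the first two subspaces correspond respectively to nbc flags not involving $H_n$ and those whose top hyperplane is $H_n$. In this basis the Gram matrix of $S^{(a)}$ becomes block triangular up to controlled corrections, so that $\det S^{(a)}_\A$ factors into $\det S^{(a)}_{\A'}$, $\det S^{(a)}_{\A''}$ (with weights induced from $a$), and an explicit factor that accounts for the weight $a_n$ and for dense edges of $\A$ newly appearing on $H_n$.

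The main obstacle is the combinatorial accounting in that last step: a dense edge of $\A$ can arise as a dense edge of $\A'$ disjoint from $H_n$, as a dense edge of $\A''$ lying on $H_n$, or as an edge that becomes dense in $\A$ precisely because $H_n$ couples previously decoupled subarrangements. One must verify that each such edge contributes its weight with a strictly positive exponent and that no cancellations occur. This is where the density hypothesis is essential: non-dense edges factor into tensor products of smaller contravariant forms, by the very definition of density, and therefore do not produce new factors in the determinant. Once this bookkeeping is carried out and the base case (an essential arrangement of $k$ coordinate hyperplanes, for which $\det S^{(a)} = a_1 \cdots a_k$) is checked directly, the induction closes and Theorem \ref{thmShap} follows.
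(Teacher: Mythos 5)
First, note that the paper does not actually prove this statement: it is quoted from \cite[Theorem 3.7]{SV} with a \textit{qed} mark, and the argument there proceeds quite differently from yours --- by decomposing $\FF^p(\A)$ into an orthogonal direct sum over codimension-$p$ edges (localization), factoring the top form of a reducible central arrangement as a tensor product of the forms of its irreducible pieces, and inducting on irreducible central arrangements using the nonvanishing of the weight of the center. That route only ever needs nondegeneracy and never computes a determinant. Your plan --- prove the full determinant formula $\det S^{(a)}|_{\FF^p(\A)}=\prod_{X_\alpha\ \mathrm{dense}}a_\alpha^{m_\alpha^p}$ and read off nondegeneracy --- aims at a genuinely stronger statement, which is indeed a theorem in the literature, but as written your argument has real gaps rather than just omitted routine details.

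Concretely: (i) The decisive step is deferred, not proved. You say ``one must verify that each such edge contributes its weight with a strictly positive exponent and that no cancellations occur'' and ``once this bookkeeping is carried out \dots the induction closes'' --- but that bookkeeping \emph{is} the theorem; nothing in the sketch shows the block-triangular corrections are ``controlled.'' (ii) The induction does not close as set up, because the restricted arrangement $\A''$ on $H_n$ carries \emph{induced} weights: several hyperplanes $H_i$ may restrict to the same hyperplane of $\A''$, whose natural weight is $\sum a_i$ over those $H_i$, and more generally a dense edge $X\subset H_n$ of $\A''$ has induced weight $a_X-a_n$. These induced weights can vanish even when every dense edge of $\A$ has nonzero weight, so you cannot invoke the inductive hypothesis for $(\A'',a'')$, and a naive factorization $\det S_\A=\det S_{\A'}\cdot\det S_{\A''}\cdot(\text{explicit factor})$ cannot hold literally --- the factors $a_X-a_n$ would have to cancel against the ``explicit factor,'' which is exactly the delicate point. (iii) The claimed strict positivity of all exponents $m_\alpha^p$ is false: already for $p=1$ one has $\det S^{(a)}|_{\FF^1(\A)}=\prod_{j\in J}a_j$, so every dense edge of codimension greater than one appears with exponent zero; only nonnegativity (together with the product running over dense edges) is needed, but the error signals that the exponent bookkeeping has not actually been done. (iv) A smaller point: the inclusion $\FF(\A')\to\FF(\A)$ is not a map of complexes for $d$, since the differential on $\A$ sums over more edges; you only need the graded-vector-space statement, but the ``short exact sequence of flag complexes'' should not be asserted. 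If you want to pursue the determinant route, the place to look is the localization decomposition (which immediately reduces everything to top-degree forms of central arrangements) combined with a careful deletion--contraction in the matroid setting; otherwise the localization-plus-factorization argument of \cite{SV} gives nondegeneracy with far less work.
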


Notice that none of the dense edges has  weight zero if all weights are positive.

\smallskip
If the weight $a$ is such that none of the dense edges has  weight zero, then the isomorphism
of Theorem \ref{thmShap}
and the  graded algebra structure on $\OS(\A)$ induce a graded algebra structure on $\FF(\A)$.

\subsection{Differential forms}
\label{DifForms}

For  $j\in J$,  fix defining equations $f_j=0$ for the hyperplanes $H_j$,
where $f_j = b^1_jt_1+\dots+b^k_jt_k + z_j$
with $b^i_j, z_j\in \C$.
Consider the logarithmic differential 1-forms
$\omega_j = df_j/f_j$ on $\C^k$.
Let $\bar{\OS}(\A)$ be the exterior $\C$-algebra of differential forms
generated by 1 and $\omega_j$, $j\in J$.
The map ${\OS}(\A) \to \bar{\OS}(\A), \ (H_j) \mapsto \omega_j$,
is an isomorphism. We identify ${\OS}(\A)$ and $\bar{\OS}(\A)$.

For $I=\{i_1,\dots,i_k\}\subset J$, denote $d_I=d_{i_1,\dots,i_k}=\det_{i,l=1}^k(b^i_{i_l})$.
Then
$\om_{i_1}\wedge\dots\wedge \om_{i_k}=\frac{ d_{i_1,\dots,i_k}}{f_{i_1}\dots f_{i_k}}\,dt_{1}\wedge\dots\wedge dt_{k}$.

\subsection{Master function}
\label{function}

The {\it master function} of the weighted arrangement $(\A, a)$ is
\bean
\label{Mast k}
\Phi_{\A,a} = {\sum}_{j\in J}\,a_j \log f_j,
\eean
a multivalued function on $U(\A)$.
Let $C_{\A,a} =\{ u\in U(\A)\ |\ \frac{\der\Phi_{\A,a}}{\der t_i}(u)=0\ \on{for}\ i=1,\dots,k\}$ be the critical set of  $\Phi_{\A,a}$.

\subsection{Isolated critical points}
\label{IsCrPt}

For generic weight $a\in(\C^\times)^n$, all critical points of
$\Phi_{\A,a}$ are nondegenerate and the number of critical points equals $|\chi(U(\A))|$,
see \cite{V2, OT1, Si}.

  Consider the projective space $\Pee^k$ compactifying $\C^k$. Assign
the weight $a_\infty=-\sum_{j\in J} a_j$ to the hyperplane
$H_\infty=\Pee^k-\C^k$. Denote by  $\bar{\A}$
the arrangement $(H_j)_{j\in J\cup \infty}$ in $\Pee^k$.
The weighted arrangement $(\A, a)$ is called {\it unbalanced}
if the weight of any dense edge of $\bar{\A}$ is nonzero, see \cite{V5}.
For example, $(\A, a)$ is unbalanced if all weights $(a_j)_{j\in J}$ are positive.

 If $(\A, a)$ is unbalanced, then
all critical points of $\Phi_{\A,a}$  are isolated and the sum of their Milnor numbers equals
$|\chi(U(\A))|$, see {\cite[Section 4]{V5}}.

\subsection {Residue}

Let  $\mc O(U(\A))$ be the algebra of regular functions on $U(\A)$  and
\linebreak
$I_{\A,a} =\langle \frac{\partial \Phi_{\A,a}} {\partial t_i }
\ |\ i=1,\dots,k\ \rangle \subset \mc O(U(\A))$
the ideal generated by  first  derivatives of $\Phi_{\A,a}$.
Let $ \OC = \mc O(U(\A))/ I_{\A,a} $ be
the algebra of functions on the critical set  and
$[\,]: \mc O(U(\A)) \to  \OC$, $f\mapsto [f]$, the projection.
We assume that all critical points are isolated. In that case the  algebra
$ \OC$ is finite-dimensional and
 the elements  $[1/f_j]$, $j\in J$, generate $  \OC$ as an algebra, see {\cite[Lemma 2.5]{V5}}.

Let  $\mc R :  \OC \to \C$ be the Grothendieck residue,
\bea
[f] \ \mapsto \ \frac 1{(2\pi i)^k}\,\Res
\ \frac{ f}{\prod_{j=1}^k\, \frac{\der \Phi_{\A,a}}{\der t_j}}
=\frac{1}{(2\pi i)^k}\int_{\Gamma}
\frac{f\ dt_1\wedge\dots\wedge dt_k}{\prod_{j=1}^k \frac{\der \Phi_{\A,a}}{\der t_j}}\ .
\eea
Here  $\Gamma$ is the real $k$-cycle defined by the equations
$|\frac{\der \Phi_{\A,a}}{\der t_j}|=\epsilon_j,\ j=1,\dots,k$,
where  $\epsilon_j$ are  small positive numbers,
see \cite{GH}.
Define the   {\it residue bilinear form}  $(\,,\,)_{C_{\A,a}}$ on $\OC$ by
$([f],[g])_{C_{\A,a}} = \mc R([f][g])$.
This form is nondegenerate, see \cite{AGV}, and $([f][g],[h])_{C_{\A,a}}=([f],[g][h])_{C_{\A,a}}$ for all $[f],[g],[h]\in \OC$, thus  $(\OC, (\,,\,)_{C_{\A,a}})$ is a {\it Frobenius algebra}.

\subsection{Orthogonal projection}
\label{OProj}

Let
$\pi^\perp : \FF^k(\A)\to \on{Sing}_a\FF^k(\A)$
be the orthogonal projection with respect to $S^{(a)}$.

If the weight $a\in(\C^\times)^n$ is unbalanced, 
then $d \FF^{k-1}(\A)= \on{Sing}_a\FF^k(\A)^\perp$,
where $d \FF^{k-1}(\A)\subset \FF^k(\A)$ is the image of the differential defined by \Ref{d in F} and
$\on{Sing}_a\FF^k(\A)^\perp \subset \FF^k(\A)$ is the orthogonal complement to
 $\on{Sing}_a\FF^k(\A)$  with respect to $S^{(a)}$, see \cite[Lemma 2.14]{V8}.

Define the map
\bean
\label{psi map}
\nu_\A \  :\  \FF^k(\A) \to \OC,
\qquad
F \mapsto [f]\,,
\eean
where $f$ is defined by the formula
$\mc S^{(a)}(F) = f dt_1\wedge\dots\wedge dt_k$.
Clearly, $ \nu_\A(\on{Sing}_a\FF^k(\A)^\perp) = \nu_\A (d \FF^{k-1}(\A))=0$,
since $\om^{(a)}=0$ on $C_{\A,a}$.

\begin{thm} [\cite{V8}]
\label{1Mn thm}
If the weight $a\in(\C^\times)^n$ is unbalanced, then the  map
$\nu_\A\big|_{\on{Sing}_a\FF^k(\A)}  :  \on{Sing}_a\FF^k(\A) \to  \OC$
is an isomorphism of vector spaces.
The isomorphism $\nu_\A$ identifies the residue form on $\OC$ and
the contravariant form on $\sing \FF^k(\A)$ multiplied by $(-1)^k$,
$S^{(a)}(f,g) = (-1)^k
(\nu_\A(f),\nu_\A(g))_{C_{\A,a}}$
for  $f,g\in \on{Sing}_a\FF^k(\A).$
\qed
\end{thm}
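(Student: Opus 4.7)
The plan proceeds in three stages: first, identify the kernel of $\nu_\A$ and descend via orthogonal projection; second, verify the form identity $(-1)^k(\nu_\A F,\nu_\A G)_{C_{\A,a}} = S^{(a)}(F,G)$ for $F,G\in\on{Sing}_a\FF^k(\A)$, which is the principal obstacle; and third, close with a dimension count and surjectivity.

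For the first stage, take $F = dG \in d\FF^{k-1}(\A)$. The chain-map property of $\mc S^{(a)}$ gives $\mc S^{(a)}(F) = \om^{(a)}\wedge \mc S^{(a)}(G)$. Under the identification of $\OS(\A)$ with logarithmic forms from Section \ref{DifForms}, $\om^{(a)} = \sum_j a_j\, df_j/f_j = d\Phi_{\A,a} = \sum_{i=1}^k \der_{t_i}\Phi_{\A,a}\, dt_i$. Writing $\mc S^{(a)}(G) = \sum_{i=1}^k \eta_i\, dt_1\wedge\cdots\wedge\widehat{dt_i}\wedge\cdots\wedge dt_k$ and expanding the wedge product, the resulting top form has coefficient $\sum_i \pm\der_{t_i}\Phi_{\A,a}\cdot \eta_i \in I_{\A,a}$, so $\nu_\A(F) = 0$. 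Under the unbalanced hypothesis, $d\FF^{k-1}(\A) = \on{Sing}_a\FF^k(\A)^\perp$ (Section \ref{OProj}), so $\nu_\A$ factors through $\FF^k(\A)/\on{Sing}_a\FF^k(\A)^\perp$, which $\pi^\perp$ canonically identifies with $\on{Sing}_a\FF^k(\A)$.

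The central step is the bilinear form identity. Writing $\mc S^{(a)}(F) = f\om$ and $\mc S^{(a)}(G) = g\om$ with $\om = dt_1\wedge\cdots\wedge dt_k$, the right-hand side is
\begin{equation*}
\mc R([fg]) = \frac{1}{(2\pi i)^k}\int_\Gamma \frac{fg\,\om}{\prod_{i=1}^k \der_{t_i}\Phi_{\A,a}}.
\end{equation*}
Using the expansion $\om_{i_1}\wedge\cdots\wedge\om_{i_k} = d_{i_1,\dots,i_k}\,(f_{i_1}\cdots f_{i_k})^{-1}\om$ from Section \ref{DifForms}, both $f$ and $g$ are explicit rational sums indexed by independent $k$-subsets weighted by $a_{i_1}\cdots a_{i_k}\,d_{i_1,\dots,i_k}$. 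The left-hand side $\langle \mc S^{(a)}(F), G\rangle$ is the combinatorial $\OS$-$\FF$ duality pairing, also a weighted sum over independent $k$-subsets. My approach is to match the two expansions by transforming the Grothendieck integral via the global residue theorem: using $\der_{t_i}\Phi_{\A,a} = \sum_j a_j b_j^i/f_j$, the contour $\Gamma$ surrounding critical points of $\Phi_{\A,a}$ can be deformed to a sum of local contours around transverse intersections $H_{i_1}\cap\dots\cap H_{i_k}$, and each local residue reduces, via the Jacobian identity $\det(b_{i_j}^l)_{l,j}=d_{i_1,\dots,i_k}$, to a single term of the combinatorial pairing, with the overall sign $(-1)^k$ arising from the orientation of the deformed cycle. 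Carrying out this deformation and sign matching term-by-term is the technical core and the chief obstacle.

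To conclude, dimensions match. Under the unbalanced hypothesis, every dense edge of $\A$ remains dense in $\bar\A$ with unchanged weight, so Theorem \ref{thmShap} applies: $\mc S^{(a)}$ is an isomorphism of complexes and
\begin{equation*}
\dim \on{Sing}_a\FF^k(\A) = \dim \FF^k(\A) - \dim d\FF^{k-1}(\A) = \dim H^k(\FF(\A),d) = |\chi(U(\A))|.
\end{equation*}
By Section \ref{IsCrPt}, this equals the sum of Milnor numbers, hence $\dim \OC$. The elements $[(f_{i_1}\cdots f_{i_k})^{-1}]$ for independent $k$-subsets span $\OC$, and each is the $\nu_\A$-image of a nonzero scalar multiple of a flag class (using $d_{i_1,\dots,i_k}\neq 0$ and $a_{i_j}\neq 0$), so $\nu_\A\colon\FF^k(\A)\to\OC$ is surjective. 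Combined with the dimension count, $\nu_\A|_{\on{Sing}_a\FF^k(\A)}$ is an isomorphism of vector spaces, and the form identity of the previous stage yields the claimed equality of bilinear forms.
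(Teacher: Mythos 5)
A preliminary remark: the paper does not prove Theorem \ref{1Mn thm} at all; it is imported from \cite{V8} with a \,\qedsymbol, so there is no in-paper argument to compare against and your proposal must stand on its own. Your first stage (the chain-map computation showing $\nu_\A(d\FF^{k-1}(\A))=0$, hence that $\nu_\A$ descends) is correct and matches the remark the paper itself makes in Section \ref{OProj}. Your third stage (dimension count via $\dim\on{Sing}_a\FF^k(\A)=\dim H^k(\FF(\A),d)=|\chi(U(\A))|=\dim\OC$, plus surjectivity) is essentially sound, though the assertion that the classes $[(f_{i_1}\cdots f_{i_k})^{-1}]$ over independent $k$-subsets span $\OC$ needs an argument reducing algebra generation by the $[1/f_j]$ to linear spanning by degree-$k$ independent products (via relations of the type \Ref{pREL} and \Ref{propF}); this is a minor but real omission.

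The genuine gap is your second stage, which is the entire content of the theorem and which you explicitly defer. Worse, the outline you give for it cannot close as stated, because it never uses the hypothesis that $F$ and $G$ are singular. The identity $S^{(a)}(F,G)=(-1)^k\mc R([fg])$ is \emph{false} on all of $\FF^k(\A)$: take $G\in\on{Sing}_a\FF^k(\A)^\perp$ nonzero; then $[g]=0$, so the residue side vanishes for every $F$, while $S^{(a)}(F,G)\neq 0$ for some $F$ since the contravariant form is nondegenerate under the unbalanced hypothesis. Hence any ``term-by-term matching of the two expansions over independent $k$-subsets,'' carried out uniformly in $F$ and $G$ as you describe, must be wrong. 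Where singularity actually enters is precisely in the cycle deformation you gloss over: the polar locus of $fg\,dt_1\wedge\dots\wedge dt_k\big/\prod_i\der_{t_i}\Phi_{\A,a}$ on $\Pee^k$ includes not only the critical points and the normal-crossing vertices but also the dense edges of codimension $<k$, the non-transverse strata, and the hyperplane at infinity, along which the closures of the hypersurfaces $\{\der_{t_i}\Phi_{\A,a}=0\}$ intersect in positive dimension — so the several-variable global residue theorem does not apply directly. (Already for $k=1$ one checks that the residue at infinity is $-AB/a_J$ with $A,B$ the pairings of $F,G$ against $\om^{(a)}$, which vanishes exactly when one of $F,G$ is singular.) Showing that the singular-vector condition and the unbalanced condition kill all contributions except those at the vertices is the theorem; the published proofs in \cite{V4,V5,V8} avoid this route entirely, first establishing the identity for generic weights with nondegenerate critical points via the Hessian formula $S^{(a)}(F,G)=(-1)^k\sum_p f(p)g(p)/\det\on{Hess}\Phi_{\A,a}(p)$ and then extending by a deformation/continuity argument in $a$.
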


\begin{rem}
If the weight $a\in(\C^\times)^n$ is unbalanced, then the isomorphism $\mu_\A$
 induces a commutative associative algebra structure
on $\sing_a \FF^k(\A)$. Together with the contravariant form  $S^{(a)}|_{\on{Sing}_a\FF^k}$  it is a Frobenius algebra.
The algebra of multiplication operators on $\sing_a \FF^k(\A)$ is 
called the {\it Bethe algebra}
 of the weighted arrangement $(\A,a)$.   This Bethe algebra is
an analog of the { Bethe algebra} in the theory of quantum integrable models,
see, for example, \cite{MTV1, MTV2, V4, V5}.

\end{rem}

\subsection{Integral structure on $\OC$ and $\Sing_a\FF^k(\A)$}
\label{IS}
If the weight $a$ is unbalanced, the formula $H^p(\FF(\A),d) = H^p((\FF(\A))_\Z,d_\Z)\otimes \C$
 and the isomorphism  $\nu_\A\big|_{\on{Sing}_a\FF^k(\A)} : H^k(\FF(\A),d)$
$ \cong \Sing_a\FF(\A) \to \OC$
define an integral structure on $\OC$.
More precisely, for   a $k$-flag of edges $X_{\al_0}\supset X_{\al_1}\supset\dots\supset X_{\al_k}$, let
$\mc S^{(a)}(F_{\al_0,\dots,\al_k}) = f_{\al_0,\dots,\al_k} dt_1\wedge\dots\wedge dt_k$.
Denote by $w_{\al_0,\dots,\al_k}$ the element $[f_{\al_0,\dots,\al_k}]\in \OC$.

\begin{cor} [\cite{V8}]
\label{m-span}
If the weight $a$ is unbalanced, then the
set of all elements $\{w_{\al_0,\dots,\al_k}\}$, labeled by all  $k$-flag of edges  of $\A$, spans
the vector space  $\OC$.
All linear relations between the elements of the set are corollaries of the relations
\bean
\label{rls w}
&&
{\sum}_{X_\beta,
X_{\alpha_{j-1}}\supset X_\beta\supset X_{\alpha_{j+1}}}\!\!\!
w_{{\alpha_0},\dots,
{\alpha_{j-1}},{\beta},{\alpha_{j+1}},\dots,{\alpha_p}=\alpha}=0\,,
\\
&&
{\sum}_{X_{\beta},
X_{\alpha_{p}}\supset X_{\beta}}
 w_{{\alpha_0},\dots,
{\alpha_{p}},{\beta}} = 0\,,
\notag
\eean
cf. formulas \Ref{rlF}, \Ref{d in F}.
\qed
\end{cor}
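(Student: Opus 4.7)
The plan is to deduce the corollary from Theorem \ref{1Mn thm} together with the identification $\Sing_a\FF^k(\A)^\perp = d\FF^{k-1}(\A)$ stated in Section \ref{OProj}.

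First, I would recall that the vector space $\FF^k(\A)$ is by construction spanned by the flag generators $F_{\alpha_0,\dots,\alpha_k}$ labelled by complete $k$-flags, subject precisely to the defining flag relations \Ref{rlF}. By the definition \Ref{psi map} of $\nu_\A$ and the notation introduced just before the statement, $\nu_\A(F_{\alpha_0,\dots,\alpha_k}) = w_{\alpha_0,\dots,\alpha_k}$. So the $w$'s are exactly the images of the flag generators of $\FF^k(\A)$ under $\nu_\A$.

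Next I would argue spanning. Since $d\FF^{k-1}(\A) = \Sing_a\FF^k(\A)^\perp$ by the unbalanced hypothesis, we have the orthogonal decomposition
\begin{equation*}
\FF^k(\A) = \Sing_a\FF^k(\A) \oplus d\FF^{k-1}(\A).
\end{equation*}
The remark after \Ref{psi map} states that $\nu_\A$ vanishes on $d\FF^{k-1}(\A)$, while Theorem \ref{1Mn thm} says that $\nu_\A$ restricts to an isomorphism $\Sing_a\FF^k(\A) \xrightarrow{\sim} \OC$. Hence $\nu_\A : \FF^k(\A) \to \OC$ is surjective with kernel exactly $d\FF^{k-1}(\A)$. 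Surjectivity together with the fact that the $F_{\alpha_0,\dots,\alpha_k}$ span $\FF^k(\A)$ yields that the $w_{\alpha_0,\dots,\alpha_k}$ span $\OC$.

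For the completeness of the relations, suppose $\sum_i c_i\, w_{\alpha_0^{(i)},\dots,\alpha_k^{(i)}} = 0$ in $\OC$. Lifting to $\FF^k(\A)$, the element $\eta = \sum_i c_i\, F_{\alpha_0^{(i)},\dots,\alpha_k^{(i)}}$ lies in $\ker\nu_\A = d\FF^{k-1}(\A)$. Since $\FF^{k-1}(\A)$ is spanned by $(k-1)$-flag generators $F_{\alpha_0,\dots,\alpha_{k-1}}$, and by \Ref{d in F}
\begin{equation*}
d\,F_{\alpha_0,\dots,\alpha_{k-1}} = \sum_{X_\beta,\, X_{\alpha_{k-1}} \supset X_\beta} F_{\alpha_0,\dots,\alpha_{k-1},\beta},
\end{equation*}
one can write $\eta$ in $\FF^k(\A)$ as a linear combination of such $d$-images; this accounts for the second family of relations in \Ref{rls w}. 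The only ambiguity in lifting $\eta$ from formal sums of flag symbols to $\FF^k(\A)$ comes from the defining relations \Ref{rlF} of the flag complex, which are exactly the first family of relations in \Ref{rls w}. Hence every linear relation among the $w$'s is a consequence of the two listed families, proving the corollary.

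The step that needs the most care is the last one: one must verify that the two families in \Ref{rls w} indeed suffice, i.e.\ that no further relation is hidden in the interaction between the flag relations \Ref{rlF} and the differential relations \Ref{d in F}. This is handled cleanly by the direct sum decomposition $\FF^k(\A) = \Sing_a\FF^k(\A) \oplus d\FF^{k-1}(\A)$ coupled with the isomorphism from Theorem \ref{1Mn thm}, which identifies the kernel of $\nu_\A$ with $d\FF^{k-1}(\A)$ exactly.
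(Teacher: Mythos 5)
Your proof is correct and follows exactly the route the paper intends: the corollary is stated with a citation to [V8] and is meant to be read off from Theorem \ref{1Mn thm} together with the facts recalled in Section \ref{OProj}, namely that $\nu_\A$ kills $d\FF^{k-1}(\A)=\Sing_a\FF^k(\A)^\perp$ and restricts to an isomorphism on $\Sing_a\FF^k(\A)$, so that $\OC\cong\FF^k(\A)/d\FF^{k-1}(\A)$ and the presentation by the relations \Ref{rls w} is immediate. No gaps; your closing remark correctly identifies the direct-sum decomposition as the point that makes the list of relations complete.
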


Similarly,  for   a $k$-flag of edges $X_{\al_0}\supset X_{\al_1}\supset\dots\supset X_{\al_k}$, let
$v_{\al_0,\dots,\al_k}$ be the orthogonal projection of $F_{\al_0,\dots,\al_k}$ to $\Sing_a\FF^k(\A)$.

\begin{cor} [\cite{V8}]
If the weight $a$ is unbalanced, then the
set of all elements $\{v_{\al_0,\dots,\al_k}\}$, labeled by all  $k$-flag of edges  of $\A$, spans
the vector space  $\Sing_a\FF^k(\A)$.
All linear relations between the elements of the set are corollaries of the relations
\bean
\label{relsV}
&&
{\sum}_{X_\beta,
X_{\alpha_{j-1}}\supset X_\beta\supset X_{\alpha_{j+1}}}\!\!\!
v_{{\alpha_0},\dots,
{\alpha_{j-1}},{\beta},{\alpha_{j+1}},\dots,{\alpha_p}=\alpha}=0\,,
\\
&&
{\sum}_{X_{\beta},
X_{\alpha_{p}}\supset X_{\beta}}
 v_{{\alpha_0},\dots,
{\alpha_{p}},{\beta}} = 0\,,
\notag
\eean
cf. formulas \Ref{rlF}, \Ref{d in F}.
\qed
\end{cor}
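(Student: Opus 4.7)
My plan is to deduce this corollary directly from the preceding Corollary~\ref{m-span} through the isomorphism $\nu_\A|_{\Sing_a\FF^k(\A)} : \Sing_a\FF^k(\A) \to \OC$ supplied by Theorem~\ref{1Mn thm}.

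The key point is that $\nu_\A$ sends each $v_{\al_0,\dots,\al_k}$ to the corresponding $w_{\al_0,\dots,\al_k}$. Indeed, $\nu_\A$ is defined on all of $\FF^k(\A)$ but vanishes on $\Sing_a\FF^k(\A)^\perp = d\FF^{k-1}(\A)$, as remarked immediately after its definition \Ref{psi map}; since $F_{\al_0,\dots,\al_k} - v_{\al_0,\dots,\al_k} \in \ker\pi^\perp = d\FF^{k-1}(\A)$, one gets
$$\nu_\A(v_{\al_0,\dots,\al_k}) \;=\; \nu_\A(F_{\al_0,\dots,\al_k}) \;=\; w_{\al_0,\dots,\al_k}.$$
Because $\nu_\A|_{\Sing_a\FF^k(\A)}$ is a linear isomorphism, spanning and the complete list of linear relations transfer verbatim from Corollary~\ref{m-span} to the present statement: the two relations in \Ref{relsV} are exactly the $\nu_\A$-preimages of the two relations in \Ref{rls w}.

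It still needs to be verified that the relations \Ref{relsV} themselves hold in $\Sing_a\FF^k(\A)$, but this is immediate: the first is already the defining relation \Ref{rlF} of $\FF^k(\A)$, and therefore valid in the subspace $\Sing_a\FF^k(\A)$; the second is the image under $\pi^\perp$ of $dF_{\al_0,\dots,\al_{p-1}}$ computed via \Ref{d in F}, which vanishes because $d\FF^{k-1}(\A) \subseteq \ker\pi^\perp$. I do not anticipate any substantial obstacle: the whole reduction rests on the identity $\ker\pi^\perp = d\FF^{k-1}(\A)$ recalled in Section~\ref{OProj} from \cite[Lemma 2.14]{V8}, together with the already established spanning and relation structure of the $w$'s in Corollary~\ref{m-span}. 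If one preferred an argument internal to $\FF^k(\A)$, one would instead verify directly that any vanishing combination $\sum c_{\al_0,\dots,\al_k} v_{\al_0,\dots,\al_k} = 0$ lifts to $\sum c_{\al_0,\dots,\al_k} F_{\al_0,\dots,\al_k} \in d\FF^{k-1}(\A)$, and then express this lift as a linear combination of the second type of relations in \Ref{relsV}.
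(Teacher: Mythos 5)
Your proposal is correct: the paper itself states this corollary without proof (citing \cite{V8}), but your derivation — transferring the spanning set and the complete list of relations from Corollary \ref{m-span} through the isomorphism $\nu_\A|_{\Sing_a\FF^k(\A)}$ of Theorem \ref{1Mn thm}, using $\ker\pi^\perp=\Sing_a\FF^k(\A)^\perp=d\FF^{k-1}(\A)$ to see that $\nu_\A(v_{\al_0,\dots,\al_k})=w_{\al_0,\dots,\al_k}$ — is exactly the route the paper implicitly intends, as confirmed by its remark immediately afterwards that $\nu_\A : v_{\al_0,\dots,\al_k}\mapsto w_{\al_0,\dots,\al_k}$. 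Your direct verification that the relations \Ref{relsV} hold (the first by applying $\pi^\perp$ to \Ref{rlF}, the second because $d\FF^{k-1}(\A)\subseteq\ker\pi^\perp$) correctly closes the only point that needs checking.
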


We have  $ \nu_\A : v_{\al_0,\dots,\al_k} \mapsto w_{\al_0,\dots,\al_k}$.
The elements $\{w_{\al_0,\dots,\al_k}\}\subset \OC$ and
$\{v_{\al_0,\dots,\al_k}\}\subset \Sing_a\FF^k(\A)$ are called the {\it
marked elements}. The relations \Ref{rls w}, \Ref{relsV}
are called the {\it marked relations}.

\subsection{Combinatorial connection, I}
\label{sec CCI}

Consider a deformation $\A(s)$ of the  arrangement $\A$, which preserves the combinatorics of $\A$.
Assume that  the edges of $\A(s)$ can be identified with the edges of $\A$ so that
the elements in formula  \Ref{rlF} and the differential in formula \Ref{d in F}
do not depend on $s$. 
Then for every $s$, the elements
$\{w_{\al_0,\dots,\al_k}(s)\}$ span $\mc O(C_{\A(s),a})$ as a vector space with linear relations
\Ref{rls w}  not depending on $s$. This  allows us to identify all the vector spaces $\mc O(C_{\A(s),a})$.
In particular, if an element $w(s)\in \mc O(C_{\A(s),a})$ is given, then the derivative $\frac{dw}{ds}$ is well-defined.
This construction is called the {\it combinatorial connection} on the family of algebras $\mc O(C_{\A(s),a})$, see \cite{V6}.
All the elements $\{w_{\al_0,\dots,\al_k}(s)\}$ are flat sections of the combinatorial connection.

Similarly we can define the combinatorial connection on the family of vector spaces
\linebreak
 $\Sing_{a}\FF^k(\A(s))$.

\subsection{Arrangement with normal crossings}

\label{NoCro}

 An essential arrangement $\A$ is {\it with normal crossings},
if exactly $k$ hyperplanes meet at every vertex of $\A$.
Assume that $\A$ is an essential arrangement with normal crossings.

A basis of $\OS^p(\A)$ is formed by
$(H_{j_1},\dots,H_{j_p})$, where
$\{{j_1} <\dots <{j_p}\}$  are independent ordered $p$-element subsets of
$J$. The dual basis of $\FF^p(\A)$ is formed by the corresponding vectors
$F(H_{j_1},\dots,H_{j_p})$.
These bases of $\OS^p(\A)$ and $\FF^p(\A)$  are called {\it standard}.
We  have
\bean
\label{skew}
F(H_{j_1},\dots,H_{j_p}) = (-1)^{|\sigma|}
F(H_{j_{\sigma(1)}},\dots,H_{j_{\sigma(p)}}), \qquad \on{for}\ \sigma \in \Si_p.
\eean
For an independent subset $\{j_1,\dots,j_p\}$, we have
$S^{(a)}(F(H_{j_1},\dots,H_{j_p}) , F(H_{j_1},\dots,H_{j_p})) = a_{j_1}\cdots a_{j_p}$
and
$S^{(a)}(F(H_{j_1},\dots,H_{j_p}) , F(H_{i_1},\dots,H_{i_k})) = 0$
for distinct elements of the standard basis.
If $a$ is unbalanced, then the marked elements in $\OC$ are
\bean
\label{markeD}
w_{i_1,\dots,i_k} = d_{i_1,\dots,i_k} \frac{a_{i_1}}{[f_{i_1}]}\dots \frac{a_{i_k}}{[f_{i_k}]}\,,
\eean
where  $\{i_1,\dots,i_k\}$ runs through the set of all independent $k$-element subsets of $J$. We have
$w_{i_{\sigma(1)},\dots,i_{\sigma(k)}} =
 (-1)^\sigma w_{i_1,\dots,i_k}$ for $\sigma \in \Si_k$.
We put $w_{i_1,\dots,i_k}=0$ if the set $\{i_1,\dots,i_k\}$ is dependent. The marked relations are labeled by independent subsets
$\{i_2,\dots,i_k\}$ and have the form
${\sum}_{j\in J} w_{j,i_2,\dots,i_k} = 0$.
The marked elements $v_{i_1,\dots,i_k}$ in $\Sing_a\FF^k(\A)$ are orthogonal projections to $\Sing_a\FF^k(\A)$ of
the elements $F(H_{i_1},\dots, H_{i_k})$ with the skew-symmetry property
$v_{i_{\sigma(1)},\dots,i_{\sigma(k)}} = (-1)^\sigma v_{i_1,\dots,i_k}$
for $\sigma \in \Si_k$
and the marked relations
${\sum}_{j\in J} v_{j,i_2,\dots,i_k} = 0$
labeled by independent subsets  $\{i_2,\dots,i_k\}$.

For any independent ordered subset $j_1,\dots,j_p \in J$ we denote
$F_{j_1,\dots,j_p}=F(H_{j_1},\dots,H_{j_p}) \in \FF^p(\A)$
and set $F_{j_1,\dots,j_p}=0$  if $j_1,\dots,j_p$ is a dependent subset. 

\begin{cor}
\label{corORT}
The orthogonal complement $\Sing_a\FF^k(\A)$ is generated by the elements
\\
$\sum_{j\in J}F_{j,i_1,\dots,i_{k-1}}$ labeled by independent subsets $\{i_1,\dots,i_{k-1}\}\in J$,
and an element of $ \FF^k(\A)$ lies in $\Sing_a\FF^k(\A)$ if and only if it is orthogonal to all 
the elements $\sum_{j\in J}F_{j,i_1,\dots,i_{k-1}}$.
\qed
\end{cor}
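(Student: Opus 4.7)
The plan is to identify each sum $\sum_{j\in J}F_{j,i_1,\dots,i_{k-1}}$ with the image under the flag differential of a standard basis vector of $\FF^{k-1}(\A)$, and then to use the chain map property of the contravariant map $\mc S^{(a)}$ to translate the defining condition for $\Sing_a\FF^k(\A)$ into orthogonality with respect to $S^{(a)}$.

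First I would verify the identity
\[
d F_{i_1,\dots,i_{k-1}} \;=\; (-1)^{k-1}\sum_{j\in J} F_{j,i_1,\dots,i_{k-1}}
\]
for every independent $(k-1)$-subset $\{i_1,\dots,i_{k-1}\}$. By \Ref{d in F}, the left side is a sum over codimension-$k$ edges $X_\beta$ contained in $H_{i_1}\cap\dots\cap H_{i_{k-1}}$; in the normal-crossings case these correspond bijectively to hyperplanes $H_j$ such that $\{j,i_1,\dots,i_{k-1}\}$ is independent, while the convention $F_{j,i_1,\dots,i_{k-1}}=0$ for dependent subsets lets one extend the sum over all $j\in J$. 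The sign $(-1)^{k-1}$ comes from moving $j$ to the front via the skew-symmetry \Ref{skew}. In particular, the family $\{\sum_j F_{j,i_1,\dots,i_{k-1}}\}$ spans $d\FF^{k-1}(\A)$.

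Next I would apply $\mc S^{(a)}$. In normal crossings the only hyperplanes containing $H_{i_1}\cap\dots\cap H_{i_{k-1}}$ are $H_{i_1},\dots,H_{i_{k-1}}$ themselves, so the sum defining $\mc S^{(a)}(F_{i_1,\dots,i_{k-1}})$ collapses to the single term $a_{i_1}\cdots a_{i_{k-1}}(H_{i_1},\dots,H_{i_{k-1}})$. Combining this with $\mc S^{(a)}\circ d = d^{(a)}\circ \mc S^{(a)}$ (the chain map property cited after \Ref{S map}) yields
\[
\mc S^{(a)}\Bigl(\sum_{j\in J} F_{j,i_1,\dots,i_{k-1}}\Bigr) = (-1)^{k-1}\,a_{i_1}\cdots a_{i_{k-1}}\,d^{(a)}(H_{i_1},\dots,H_{i_{k-1}}).
\]

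Finally, since $(H_{i_1},\dots,H_{i_{k-1}})$ ranges over a basis of $\OS^{k-1}(\A)$ as $\{i_1,\dots,i_{k-1}\}$ runs over independent ordered $(k-1)$-subsets, these images span $d^{(a)}\OS^{k-1}(\A)$. By definition $F\in\Sing_a\FF^k(\A)$ iff $\langle F,d^{(a)}\omega\rangle=0$ for every $\omega\in\OS^{k-1}(\A)$; using the identity $\langle F,\mc S^{(a)}(G)\rangle=S^{(a)}(F,G)$ and the nonvanishing of each $a_{i_l}$, this is equivalent to $S^{(a)}(F,\sum_j F_{j,i_1,\dots,i_{k-1}})=0$ for every independent $(k-1)$-subset. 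This proves both claims of the corollary. The main obstacle is only combinatorial bookkeeping---signs from skew-symmetry, the collapse of $\mc S^{(a)}$ on standard basis vectors in normal crossings, and the handling of dependent index sets by the convention $F_{j_1,\dots,j_p}=0$---none of which require anything beyond the results already recalled in the excerpt.
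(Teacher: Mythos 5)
Your proposal is correct and follows essentially the route the paper intends: the paper leaves this corollary unproved because it is the combination of the recalled fact $d\,\FF^{k-1}(\A)=\on{Sing}_a\FF^k(\A)^{\perp}$ from Section \ref{OProj} with the explicit form of the differential \Ref{d in F} on the standard basis in the normal-crossings case, which is exactly your first paragraph; your second and third paragraphs simply rederive the orthogonality characterization directly from the definition of singular vectors via the chain-map property, which is a slightly more self-contained way of getting the ``if and only if'' clause. The only point to tighten is your closing sentence: what you have actually shown is that the span $W$ of the elements $\sum_{j}F_{j,i_1,\dots,i_{k-1}}$ satisfies $W^{\perp}=\on{Sing}_a\FF^k(\A)$, and passing from this to the first claim, $W=\on{Sing}_a\FF^k(\A)^{\perp}$, requires one more word --- either cite $d\,\FF^{k-1}(\A)=\on{Sing}_a\FF^k(\A)^{\perp}$, or note that $S^{(a)}$ is nondegenerate on $\FF^k(\A)$ because its Gram matrix in the standard basis is diagonal with nonzero entries $a_{j_1}\cdots a_{j_k}$ (recorded just before the corollary), so that $W^{\perp\perp}=W$.
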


\section{Family of parallelly transported hyperplanes}
\label{secT Tr}

\subsection{Arrangement in  $\C^n\times\C^k$}

Consider $\C^k$ with coordinates $t_1,\dots,t_k$,\
$\C^n$ with coordinates $z_1,\dots,z_n$, the projection
$\tau :\C^n\times\C^k \to \C^n$.
Fix $n$ nonzero linear functions on $\C^k$,
$g_j=b_j^1t_1+\dots + b_j^kt_k,$\ $ j\in J,$
where $b_j^i\in \C$. Assume that the functions $\{g_j\}_{ j\in J}$,
span the dual space $(\C^k)^*$.

Define $n$ linear functions on $\C^n\times\C^k$,
$f_j = g_j + z_j,$ $ j\in J.$
Consider
 the arrangement of hyperplanes $\A  = \{ H_j\}_{j\in J}$  in $\C^n\times \C^k$, where $ H_j$ is the zero set of $f_j$,
and denote by $ U(\A ) = \C^n\times \C^k - \cup_{j\in J} H_j$ the complement.
For every $x=\in \C^n$, the arrangement $\A$
induces an arrangement $\A(x)$ in the fiber $\tau^{-1}(x)\cong \C^k$. 
 Then $\A(x)$ consists of
hyperplanes  $\{H_j(x)\}_{j\in J}$, defined in $\C^k$ by the equations
$g_j+x_j=0$. Thus $\{\A(x)\}_{x\in \C^n}$ is a family of arrangements in $\C^k$,
whose hyperplanes are transported parallelly to themselves as $x$ changes.
 Denote by $ U(\A(x)) = \C^k - \cup_{j\in J} H_j(x)$  the complement.
For almost all $x\in\C^k$ the arrangement $\A(x)$ is  
with normal crossings. The subset $\Delta\subset\C^n$ where
this does not hold, is a hypersurface  called the discriminant. On the discriminant see,
for example, \cite{BB, V5}.

\subsection{Combinatorial connection, II}
\label{sec Good}

For any
$x^1, x^2\in \C^n-\Delta$, the spaces $\FF^p(\A(x^1))$, $\FF^p(\A(x^2))$
 are canonically identified if a vector $F(H_{j_1}(x^1),\dots,H_{j_p}(x^1))$ of the first space
is identified  with the vector $F(H_{j_1}(x^2),\dots,H_{j_p}(x^2))$ of the second, in other words,
if we identify the standard bases of
these spaces.

Assume that a weight $a\in(\C^\times)^n$ is given. Then each arrangement $\A(x)$  is weighted.
The identification of spaces $\FF^p(\A(x^1))$,
$\FF^p(\A(x^2))$ for $x^1,x^2\in\C^n-\Delta$ identifies the corresponding subspaces
$\on{Sing}_a\FF^k(\A(x^1))$, $\on{Sing}_a\FF^k(\A(x^2))$ and contravariant forms.

Assume that the weighted arrangement  $(\A(x),a)$ is unbalanced  for some $x\in\C^n-\Delta$,
then $(\A(x),a)$ is unbalanced for all $x\in\C^n-\Delta$.
The identification of $\on{Sing}_a\FF^k(\A(x^1))$ and $\on{Sing}_a\FF^k(\A(x^2))$
also identifies the marked elements $v_{j_1,\dots,j_k}(x^1)$ and $v_{j_1,\dots,j_k}(x^2)$,
see Section \ref{NoCro}.
For  $x\in\C^n-\Delta$, denote $V=\FF^k(\A(x))$, $\on{Sing}_a V=\on{Sing}_a\FF^k(\A(x))$,  $v_{j_1,\dots,j_k} = v_{j_1,\dots,j_k}(x)$.
The triple $(V, \on{Sing}_a V, S^{(a)})$, with marked elements $v_{j_1,\dots,j_k}$,
 does not depend on  $x$ under the identification.

As a result of this reasoning we obtain the canonically trivialized
vector bundle
\bean
\label{cmbbndl}
\sqcup_{x\in \C^n-\Delta}\,\FF^k(\A(x))\to \C^n-\Delta,
\eean
with the canonically trivialized subbundle $\sqcup_{x\in \C^n-\Delta}\, \on{Sing}_a\FF^k(\A(x))\to \C^n-\Delta$
and the constant contravariant form on the fibers.
This trivialization identifies the bundle in \Ref{cmbbndl} with the bundle
$(\cd)\times V\to \cd$
and identifies 
 the subbundle  $\sqcup_{x\in \C^n-\Delta}\,\on{Sing}_a\FF^k(\A(x))\to \C^n-\Delta$ with the subbbundle
\bean
\label{cmbBundl}
(\cd)\times(\sv)\to \cd.
\eean
The bundle in \Ref{cmbBundl} is
 called the {\it combinatorial bundle}, the flat connection on it is called {\it combinatorial}, see Section
\ref{sec CCI} and
 \cite{V5, V6}.

\subsection{Gauss-Manin connection on  $(\cd)\times (\sv)\to \cd$}
\label{Constrn}

The {\it master function}  is
$\Phi_{\A,a} = {\sum}_{j\in J}\,a_j \log f_j$,
a multivalued function on $U( {\A})$. Let $\kappa\in\C^\times$.
The function $e^{\Phi_{\A,a}/\kappa}$
defines a rank one local system $\mc L_\kappa$
on $U(\A)$ whose horizontal sections
over open subsets of $U(\A)$
 are univalued branches of $e^{\Phi_{\A,a}/\kappa}$ multiplied by complex numbers,
see, for example, \cite{SV, V2}.
The vector bundle
\bea
\sqcup_{x\in \C^n-\Delta}\,H_k(U(\A(x)), \mc L_\kappa\vert_{U(\A(x))})
 \to  \C^n-\Delta
\eea
is called the {\it homology bundle}. The homology bundle has a canonical  flat Gauss-Manin connection.

For a fixed $x\in\C^n-\Delta$, choose  $\gamma\in H_k(U(\A(x)), \mc L_\kappa\vert_{U(\A(x))})$.
The linear map $
\{\gamma\} : \OS^k(\A(x)) \to \C$,  $\om \mapsto \int_{\gamma} e^{\Phi_{\A,a}/\kappa} \om$,
is an element of $\Sing_a\FF^k(\A(x))$ by Stokes' theorem.
It is known that for generic  $\kappa$
any element of $\Sing_a\FF^k(\A(x))$ corresponds to a certain
$\gamma$ and  in that case this construction  gives the {\it integration isomorphism}
\bean
\label{iSO}
H_k(U(\A(x)), \mc L_\kappa\vert_{U(\A(x))}) \to \on{Sing}_a\FF^k(\A(x)),
\eean
 see \cite{SV}. The precise values of $\kappa$, such that \Ref{iSO} is an isomorphism, can be deduced
 from the determinant formula in \cite{V1}.

For generic $\kappa$ the fiber isomorphisms \Ref{iSO} define an isomorphism of the homology bundle and
the combinatorial bundle \Ref{cmbBundl}. The  Gauss-Manin connection induces a  connection on the combinatorial bundle.
That connection on the combinatorial bundle is  also called the {\it Gauss-Manin connection}.

Thus, there are two connections on the combinatorial bundle: the combinatorial connection and the Gauss-Manin
connection depending on  $\kappa$. In this situation we  consider the differential equations
for flat sections of the Gauss-Manin connection with respect to the combinatorially flat standard basis. Namely,
let $\gamma(x) \in H_k(U(\A(x)), \mc L_\kappa\vert_{U(\A(x))})$ be a flat section of the Gauss-Manin connection.
Let us write the corresponding section $I_\gamma(x)$ of the bundle $(\cd)\times \Sing_a V\to \cd$
in the combinatorially flat standard basis,
$I_\gamma(x) =\!\!\!
\sum_{{\rm independent } \atop \{j_1 < \dots < j_k\} \subset J }\!\!\!
I_\gamma^{j_1,\dots,j_k}(x)
 F(H_{j_1}, \dots , H_{j_k})$,
$
 I_\gamma^{j_1,\dots,j_k}(x)
=
\int_{\gamma(x)} e^{\Phi_{\A,a}/\kappa}
 \omega_{j_1} \wedge \dots \wedge \omega_{j_k}.
$
We may rewrite it as $ I_\gamma(x) =
{\sum}_{{\rm independent } \atop \{j_1 < \dots < j_k\} \subset J }
I_\gamma^{j_1,\dots,j_k}(x)
v_{j_1, \dots , j_k}$ since  $ I_\gamma(x) \in \Sing_aV$. 
For $I=\sum I^{j_1,\dots,j_k}v_{j_1, \dots , j_k}$ and $j\in J$, we denote
$\frac{\der I}{\der z_j} = \sum \frac{\der I^{j_1,\dots,j_k}}{\der z_j}v_{j_1, \dots , j_k}$.
This formula defines the combinatorial connection on the combinatorial bundle.

 The  section $I_\ga$ satisfies the Gauss-Manin differential equations
\bean
\label{dif eqn}
\kappa \frac{\der I}{\der z_j}(x) = K_j(x)I(x),
\qquad
j\in J,
\eean
where  $K_j(x) \in \End(\Sing_aV)$. See a description of the operators $K_j(x)$, for example, in
\cite{OT2,V2,V5}.

\subsection{Critical set}
\label{Cr set}

Denote by $C_{\A,a}$ the critical set of $\Phi_{\A,a}$ in the $\C^k$-direction,
\bean
\label{crit C n}
C_{\A,a} =\Big\{ (x,u)\in U(\A)\subset \C^n\times\C^k\ \big|\ \frac{\der\Phi_{\A,a}}{\der t_i}(x,u)=0\ \on{for}\ i=1,\dots,k\Big\}.
\eean

Let 
$\mc O(C_{\A(x),a})$ be the algebra of regular functions on $C_{\A(x),a} = C_{\A,a}\cap \tau^{-1}(x)$. Namely,
for $x\in\C^n$, let $I_{\A(x),a}$ be the ideal in $\mc O(U(\A(x)))$ generated by
$\frac{\der\Phi_{\A,a}}{\der t_i}, i=1,\dots,k$. We set
$\mc O(C_{\A(x),a}) = \mc O(U(\A(x)))/I_{\A(x),a}$.
Assume that the weight $a$ is such that the pair $(\A(x),a)$ is unbalanced for some $x\in\cd$. Then
 we obtain the  vector {\it bundle of algebras}
$\sqcup_{x\in\C^n-\Delta} \OCx \to \C^n-\Delta$.
For $x\in\cd$, recall  the isomorphism
\bean
\label{fiber iso}  \nu(x):=
\nu_{\A(x)}\Big|_{\Sing_a\FF^k(\A(x)}  : \Sing_a\FF^k(\A(x) \to \OCx 
\eean
of Theorem \ref{1Mn thm}.
This \lq{fiber}\rq{} isomorphism establishes an isomorphism  of the bundle
\\
$\sqcup_{x\in\C^n-\Delta} \OCx \to \C^n-\Delta$ and the bundle
$(\cd)\times(\sv)\to \cd$.
This isomorphism together with  the combinatorial and Gauss-Manin connections on the bundle
$(\cd)\times(\sv)\to \cd$
induces two connections on the bundle of algebras
$\sqcup_{x\in\C^n-\Delta} \OCx \to \C^n-\Delta$,
 which also are called  the {\it combinatorial and Gauss-Manin connections}, respectively.

\smallskip

\begin{thm}[\cite{V5}]
\label{K/f}
If  the pair $(\A(x),a)$ is unbalanced for $x\in\cd$, then for all $j\in J$, we have
\bean
\left( \Big[\frac{a_j}{f_j}\Big] *_x \right) \circ 
\nu(x) = \nu(x)
 \circ K_j(x),
\eean
where $\big[\frac{a_j}{f_j}\big] *_x$ is the operator of multiplication by
$\big[\frac{a_j}{f_j}\big]$ on $\OCx$ and $ K_j(x)\in \End(\sing_aV)$ is the operator defined
in \Ref{dif eqn}.
\qed
\end{thm}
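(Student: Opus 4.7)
The plan is to pass through the integration isomorphism \Ref{iSO} and verify the identity by direct differentiation under the integral. For generic $\kappa$, every $v\in\Sing_aV$ is realized as $I_\gamma(x_0)$ for some Gauss-Manin flat family of cycles $\gamma(x)$; by the defining equation \Ref{dif eqn}, $K_j(x_0)v = \kappa\der_{z_j}I_\gamma(x)|_{x=x_0}$. Writing $I_\gamma(x) = \sum_\beta I^\beta_\gamma(x)\>F(H_{\beta_1},\dots,H_{\beta_k})$ in the combinatorially flat basis, with $I^\beta_\gamma(x) = \int_{\gamma(x)}e^{\Phi/\kappa}\omega_{\beta_1}\wedge\dots\wedge\omega_{\beta_k}$, I will differentiate under the integral. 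The identity $\der_{z_j}\Phi = a_j/f_j$ produces a factor $a_j/f_j$ from differentiating $e^{\Phi/\kappa}$, while $\omega_{\beta_i} = df_{\beta_i}/f_{\beta_i}$ contributes $\der_{z_j}\omega_{\beta_i} = -\delta_{j,\beta_i}\omega_{\beta_i}/f_{\beta_i}$ from its own $z_j$-dependence. Combining, $\kappa\der_{z_j}I^\beta_\gamma = \int_\gamma e^{\Phi/\kappa}\eta_{j,\beta}$, where $\eta_{j,\beta} = (a_j/f_j)\omega_\beta$ if $j\notin\beta$, and $\eta_{j,\beta} = ((a_j-\kappa)/f_j)\omega_\beta$ if $j\in\beta$.

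The next step is to translate back via $\nu(x)$. The explicit formula from Section \ref{NoCro}, $\nu(F(H_{\beta_1},\dots,H_{\beta_k})) = w_{\beta_1,\dots,\beta_k} = d_{\beta_1,\dots,\beta_k}\prod_i a_{\beta_i}/[f_{\beta_i}]$, identifies $\nu$ as the composition of taking the top-form coefficient of $\mc S^{(a)}$ and passing to the class in $\OCx$. Under this dictionary, multiplication of the integrand by $a_j/f_j$ at the form level corresponds to multiplication by $[a_j/f_j]$ in $\OCx$. To handle the $\kappa$-dependent correction when $j\in\beta$, I use the observation that the twisted Stokes relation $((a_j-\kappa)/f_j)\omega_\beta \equiv -\sum_{l\neq j}(a_l/f_j)\omega_l\wedge\omega_{\beta_2}\wedge\dots\wedge\omega_{\beta_k}$ (modulo $\nabla$-exact forms with $\nabla = d + \frac{1}{\kappa}d\Phi\wedge$) has the same right-hand side as the critical-ideal relation $(a_j/f_j)\omega_\beta \equiv -\sum_{l\neq j}(a_l/f_j)\omega_l\wedge\omega_{\beta_2}\wedge\dots\wedge\omega_{\beta_k}$ in $\OCx$, which is a direct consequence of $\der\Phi/\der t_i \equiv 0$ on the critical set. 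Thus the reductions of $\eta_{j,\beta}$ (via Stokes) and of $(a_j/f_j)\omega_\beta$ (via the critical equations) project to the same element of $\OCx$ under the chain $\Sing_aV\cong H_k(\mc L_\kappa) \to$ (coefficient data) $\xrightarrow{\nu}\OCx$.

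The main obstacle I anticipate is reconciling the $\kappa$-dependent reductions in twisted cohomology with the $\kappa$-independent multiplication on $\OCx$. In the test case $k=1$, $n=2$, a direct calculation confirms both operators equal the scalar $(a_1+a_2)/(z_1-z_2)$; the explicit $(a_j-\kappa)$ factor from $\der_{z_j}\omega_j$ combines with the Stokes identity $(a_j-\kappa)/f_1^2\,dt \equiv -a_2/(f_1 f_2)\,dt$ to yield the $\kappa$-independent answer. The general case proceeds by organizing the calculation on the marked spanning set $\{v_{\beta_1,\dots,\beta_k}\}$ of Corollary \ref{m-span}, where each Stokes reduction is carried out via $\nabla((-1/f_j)\omega_{\beta_2}\wedge\dots\wedge\omega_{\beta_k})$ and matched against the critical-equation reduction via $(1/f_j)d\Phi\wedge(\text{rest})$. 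The $\kappa$-independence of both $\nu(x)$ and $\OCx$ forces all $\kappa$-dependent correction terms to cancel, yielding the claimed identity $\nu(x)\circ K_j(x) = ([a_j/f_j]*_x)\circ\nu(x)$.
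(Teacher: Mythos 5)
The paper itself offers no proof of this theorem --- it is imported from [V5] with a bare citation --- so your attempt has to be judged on its own terms. Your strategy (realize $v\in\on{Sing}_aV$ as a period vector $I_\gamma$, differentiate under the integral, and match the twisted--Stokes reduction of $\frac{a_j-\kappa}{f_j}\omega_\beta$ against the critical--ideal reduction of $\frac{a_j}{f_j}\omega_\beta$) is the right one, and your two key identities check out: both reductions land on the common expression $-\sum_{l\neq j}\frac{a_l}{f_j}\,\omega_l\wedge\omega_{\beta_2}\wedge\dots\wedge\omega_{\beta_k}$, which then decomposes into the standard forms $\omega_{\beta'}$ by the exact rational identity of Lemma \ref{lem rel}, valid simultaneously at the level of forms and in $\OCx$.

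There is, however, a gap at the last step, where you pass from ``the two reductions project to the same element of $\OCx$'' to the operator identity. Write $\eta_{j,\beta}\equiv\sum_{\beta'}M_{\beta\beta'}\,\omega_{\beta'}$ modulo $\nabla$-exact forms. Then $\kappa\,\der I^{\beta}/\der z_j=\sum_{\beta'}M_{\beta\beta'}I^{\beta'}$, so $K_j F_{\beta'}=\sum_{\beta}M_{\beta\beta'}F_{\beta}$, and the claimed intertwining $\nu\circ K_j=\big(\big[\frac{a_j}{f_j}\big]*\big)\circ\nu$ amounts to the relations $\big[\frac{a_j}{f_j}\big]\,w_{\beta'}=\sum_{\beta}M_{\beta\beta'}\,w_{\beta}$. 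What your ``same projection'' argument actually yields is $\big[\frac{a_j}{f_j}\big]\,\frac{w_{\beta}}{a_{\beta_1}\cdots a_{\beta_k}}=\sum_{\beta'}M_{\beta\beta'}\,\frac{w_{\beta'}}{a_{\beta_1'}\cdots a_{\beta_k'}}$, because $\nu(F_\beta)=w_\beta$ differs from the class of $\omega_\beta$ in $\OCx$ by the factor $a_{\beta_1}\cdots a_{\beta_k}$. These two families of relations are transposes of one another with respect to the Gram matrix $\on{diag}(a_{\beta_1}\cdots a_{\beta_k})$ of $S^{(a)}$ in the standard basis; they coincide precisely when $S^{(a)}K_j$ is symmetric. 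So, as written, you have proved the theorem with $K_j$ replaced by its $S^{(a)}$-adjoint. The gap is closable with material already in the paper: $\big[\frac{a_j}{f_j}\big]*$ is self-adjoint for the residue form (the Frobenius property of $\OCx$), and $\nu$ identifies $(-1)^kS^{(a)}$ with that form (Theorem \ref{1Mn thm}); pairing your relation against the test elements $w_{\beta'}$ and moving $\big[\frac{a_j}{f_j}\big]$ to the other slot converts it into the required one. But either this duality step, or an explicit appeal to the $S^{(a)}$-symmetry of $K_j(x)$, must be supplied --- without it the chain of equalities does not reach the stated conclusion.
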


\begin{rem}
Recall that ${a_j/f_j} = {\der\Phi_{\A,a}/\der z_j}$ and the elements $ [{a_j/f_j}]$, $j\in J$, generate the algebra $\OCx$.
Theorem \ref{K/f} says that under the isomorphism $\nu(x)$ the operators of multiplication $[{a_j/f_j}]*_x$
on $\OCx$ are identified with the operators
$K_j(x)$ in the Gauss-Manin differential equations \Ref{dif eqn}. 
The correspondence of Theorem \ref{K/f}
defines
 a commutative algebra structure on $\sing_aV$, the structure depending on $x$. The multiplication in this commutative algebra is generated by
 the operators $K_j(x), j\in J$.  The correspondence of Theorem \ref{K/f}
  also defines the Gauss-Manin  differential equations on the bundle of algebras
in terms of the multiplication in the fiber algebras, see these differential equations in
 \cite[Theorem 3.9]{V8}.

\end{rem}

\subsection{Formulas for multiplication}
\label{Form multi} Recall that for $j\in J$, we denote
$p_j = \Big[ \frac{\der \Phi}{\der z_j}\Big]=\Big[\frac{a_j}{f_j}\Big]\,\in\, \OCx$.
 Then $w_{i_1,\dots,i_k} = d_{i_1,\dots,i_k} p_{i_1}\dots p_{i_k}$,
and for any $i_1,\dots,i_{k-1}\in J$ we have
\bean
\label{pREL}
{\sum}_{j\in J} d_{j,i_1,\dots,i_{k-1}}p_j =0.
\eean
For a subset $I=\{i_1,\dots,i_{k+1}\}\subset J$ denote
\bean
\label{f_I}
f_I(z)={\sum}_{j=1}^{k+1}(-1)^{j+1}z_{i_j}d_{i_1,\dots,\widehat{i_j}, \dots,i_{k+1}}.
\eean

\begin{lem}
\label{lem rel}
We have
\bean
\label{rEl}
f_I(z) {\prod}_{j=1}^{k+1} \frac{a_{i_j}}{f_{i_j}} = {\sum}_{j=1}^{k+1}(-1)^{j+1}
a_{i_j}d_{i_1,\dots,\widehat{i_j}, \dots,i_{k+1}}{\prod}_{m=1,\ m\ne j}^{k+1} \frac{a_{i_m}}{f_{i_m}}.
\eean

\end{lem}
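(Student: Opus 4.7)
\textbf{Proof plan for Lemma \ref{lem rel}.}

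The plan is to reduce the identity to an algebraic manipulation involving a single ``linear dependence'' relation among $f_{i_1},\dots,f_{i_{k+1}}$. Specifically, I would first establish the preliminary identity
\[
f_I(z) \;=\; \sum_{j=1}^{k+1}(-1)^{j+1} d_{i_1,\dots,\widehat{i_j},\dots,i_{k+1}}\, f_{i_j},
\]
and then simply multiply both sides by $\prod_{m=1}^{k+1}(a_{i_m}/f_{i_m})$, observing that in the $j$-th summand on the right the factor $f_{i_j}$ cancels against the $f_{i_j}$ in the denominator, producing precisely the desired right-hand side of \Ref{rEl}.

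To prove the preliminary identity, I would substitute $f_{i_j}=g_{i_j}+z_{i_j}$ and split the sum into a $z$-part and a $g$-part. The $z$-part is, by the definition \Ref{f_I}, exactly $f_I(z)$. So the claim reduces to showing that the $g$-part vanishes, i.e.
\[
\sum_{j=1}^{k+1}(-1)^{j+1} d_{i_1,\dots,\widehat{i_j},\dots,i_{k+1}}\, g_{i_j} \;=\; 0
\]
as a linear function on $\C^k$. This is a classical fact: it expresses the linear dependence of the $k+1$ covectors $g_{i_1},\dots,g_{i_{k+1}}\in(\C^k)^*$ with coefficients given by the signed $k\times k$ minors of the $k\times(k+1)$ matrix $(b_{i_j}^l)$. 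I would verify it by evaluating the left-hand side on each standard basis vector $\partial/\partial t_l$: the result is the cofactor expansion along the first row of the $(k+1)\times(k+1)$ matrix obtained by prepending the row $(b_{i_1}^l,\dots,b_{i_{k+1}}^l)$ to the matrix $(b_{i_j}^m)_{m,j}$, which has two equal rows and thus vanishes.

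No serious obstacle is anticipated; the entire argument is a short application of Laplace expansion followed by a term-by-term cancellation. The only care needed is in bookkeeping the signs and the hatted indices, which I would handle by writing out the expansion explicitly in one or two lines.
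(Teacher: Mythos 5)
Your proposal is correct and follows the same route as the paper: the paper's one-line proof rests on exactly the identity $\sum_{j=1}^{k+1}(-1)^{j+1}(f_{i_j}-z_{i_j})\,d_{i_1,\dots,\widehat{i_j},\dots,i_{k+1}}=0$, which is your vanishing $g$-part (a cofactor expansion of a determinant with a repeated row), after which multiplication by $\prod_m a_{i_m}/f_{i_m}$ and cancellation gives \Ref{rEl}. You have merely written out the details the paper leaves implicit.
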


\begin{proof}  We have $\sum_{j=1}^{k+1}(-1)^{j+1}(f_{i_j}-z_{i_j})d_{i_1,\dots,\widehat{i_j}, \dots,i_{k+1}}=0$. That implies the lemma.
\end{proof}

\begin{cor}
\label{cor prod}  Assume that a subset $\{i_1,\dots,i_{k+1}\}\subset J$ consists of distinct elements and contains a $k$-element independent subset. Then we have an identity in $\OCx$:
\bean
\label{propF}
f_{i_1,\dots,i_{k+1}}(z)\, p_{i_1}\dots p_{i_{k+1}} =  {\sum}_{j=1}^{k+1}(-1)^{j+1}
a_{i_j} w_{i_1,\dots,\widehat{i_j}, \dots,i_{k+1}}.
\eean

\end{cor}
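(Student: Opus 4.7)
The plan is to deduce the identity in $\OCx$ directly from Lemma \ref{lem rel} by applying the projection $[\,\cdot\,]:\mc O(U(\A(x)))\to\OCx$ to both sides of \Ref{rEl}, and then to rewrite each resulting term using the explicit description \Ref{markeD} of the marked elements.

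First I would observe that $f_{i_1,\dots,i_{k+1}}(z)$ depends only on $z$, so when we fix the basepoint $x\in\C^n-\Delta$, it is a scalar; it therefore passes through the projection $[\,\cdot\,]$. The left-hand side of \Ref{rEl} thus projects to
\be
\Big[\,f_{i_1,\dots,i_{k+1}}(z)\prod_{j=1}^{k+1}\frac{a_{i_j}}{f_{i_j}}\,\Big] \;=\; f_{i_1,\dots,i_{k+1}}(z)\,p_{i_1}\cdots p_{i_{k+1}},
\ee
which is the left-hand side of \Ref{propF}. For the right-hand side, the $j$-th summand of \Ref{rEl} equals
\be
(-1)^{j+1}a_{i_j}\,d_{i_1,\dots,\widehat{i_j},\dots,i_{k+1}}\prod_{m\ne j}\frac{a_{i_m}}{f_{i_m}},
\ee
and comparing with \Ref{markeD} we recognize the product $d_{i_1,\dots,\widehat{i_j},\dots,i_{k+1}}\prod_{m\ne j}\,a_{i_m}/[f_{i_m}]$ as the marked element $w_{i_1,\dots,\widehat{i_j},\dots,i_{k+1}}\in\OCx$. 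Summing over $j$ yields the right-hand side of \Ref{propF}.

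The only bookkeeping point is to verify that the identification of terms is consistent even when some $k$-element subset $\{i_1,\dots,\widehat{i_j},\dots,i_{k+1}\}$ is dependent. In that case the determinant $d_{i_1,\dots,\widehat{i_j},\dots,i_{k+1}}$ is automatically zero (it is the determinant of linearly dependent vectors), and by the convention recorded just after \Ref{markeD} we also have $w_{i_1,\dots,\widehat{i_j},\dots,i_{k+1}}=0$, so the two vanishings match and the $j$-th term contributes zero on both sides of the comparison. The hypothesis that $\{i_1,\dots,i_{k+1}\}$ contains at least one independent $k$-subset ensures that the relation is nontrivial but is not needed for the algebraic derivation itself. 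No genuine obstacle appears: the proof is essentially the observation that Lemma \ref{lem rel} is already the desired relation once one factors out the scalar $f_I(z)$ on the left and names the surviving products on the right as marked elements.
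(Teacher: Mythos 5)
Your proposal is correct and is precisely the argument the paper intends: the paper's proof consists of the single line ``The corollary follows from Lemma \ref{lem rel},'' and you have supplied exactly the intended details --- project identity \Ref{rEl} to $\OCx$, pull the scalar $f_I(z)$ through, and recognize each surviving product as a marked element via \Ref{markeD}, with the dependent-subset case handled by the convention $w_{i_1,\dots,i_k}=0$. Nothing further is needed.
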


\begin{proof}
 The corollary follows from Lemma \ref{lem rel}.
\end{proof}

\section{Elementary arrangements}
\label{EA}

\subsection{Definition}
\label{sdef}
Consider a $k$-dimensional vector space $X$ with coordinates $t_{1},t_{2},\dots,t_{k}$. 
Let $\lambda=(\lambda_{1},\lambda_{2},\dots,\lambda_{m})$ be a collection of positive integers
such that $\sum_{i=1}^{m}\lambda_{i}=k$. Assume that we have a collection of non-intersecting
 index sets $J_{h}$, $h=1,\dots,m$, each with $|J_{h}|=\lambda_{h}+1$ elements.
Denote $J_{\lambda}:=\cup_{h=1}^m J_h$,  $\la^h=\la_1+\dots+\la_h$, $\la^0=0$.

Let $\A_{J_\la} = \{H_i\}_{i\in J_\la} $ be a weighted 
arrangement of affine hyperplanes in $X$
with normal crossings.
 Each hyperplane $H_{i}$ has weight $a_i$ and is defined by an equation 
$g_{i}(t_{1},\dots,t_{k})+z_{i}=0$,
where $g_{i}=\sum_{l=1}^k b^l_it_l$ is an element of the dual space $X^{*}$ and $z_{i}\in\C$.

Define a subspace $X_{h}^{*}(J_{\lambda})=\mbox{span}\left\{ g_{i}\right\} _{i\in\bigcup_{j=1}^{h}J_{j}}$ of $X^*$. We have a filtration
\begin{equation}
\label{filt}
X_{1}^{\ast}(J_{\lambda})\subset X_{2}^{\ast}(J_{\lambda})\subset\cdots\subset X_{m}^{*}(J_{\lambda})\subset X^*.
\end{equation}

The arrangement $\mathcal{C}_{J_{\lambda}}$ is called \textit{elementary
of type $\lambda$} if 
\begin{enumerate}
\item[(i)] For any $h\in\{1,\dots,m\}$ we have $\dim X_{h}^{*}(J_{\lambda})=\la^h$.
\item[(ii)] For any $h\in\{1,\dots,m\}$, any $j\in J_{h}$ and
 $A:=\{ \cup_{i=1}^{h}J_{i}\} -\{ j\} $, we have
\\
$\dim(\on{span}\{ g_{l}\} _{l\in A})=\la^h$.
\end{enumerate}

\begin{figure}
  \begin{tikzpicture}
 
\draw[thick] (1,2) -- (-1,0) node[anchor=north] {$H_{2}$};
\draw[thick] (-0.5,2) -- (2,0) node[anchor=north] {$H_{3}$};
\draw[thick] (-1.2,.75) -- (2.5,.75) node[anchor=west] {$H_{1}$};

\draw[thick] (8.5,2) -- (7,0) node[anchor=north] {$H_{3}$};
\draw[thick] (7.5,2) -- (9.5,0) node[anchor=north] {$H_{4}$};
\draw[thick] (6.5,0.5) -- (9.5,0.5) node[anchor=west] {$H_{2}$};
\draw[thick] (6.5,1) -- (9.5,1) node[anchor=west] {$H_{1}$};
\end{tikzpicture}
 \caption{ 
Elementary Arrangements in Dimension 2.}
 
\end{figure}
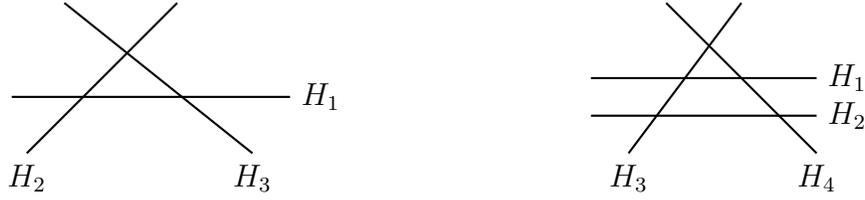

In Figure 2 the first elementary arrangement is of type $\la=(2)$ and the second is of type $\la=(1,1)$ with
$J_1=\{1,2\}, J_2=\{3,4\}$.
\smallskip
In this section we always assume that $\A_{J_\la}$ is an elementary arrangement.

\subsection{Distinguished Elements}
\label{DEl}

For $h=1,\dots,m$, let $K_{h}=\{j_1,\dots,j_{\la_h}\}$ be an ordered $\lambda_{h}$-element
subset of $J_{h}$. Recall the notation $F_{K_h} = F(H_{j_1},\dots,H_{j_{\la_h}})\in \F^{\la_h}(\A_{J_\la})$.
The elements of the flag space $\F^k(\A_{J_{\lambda}})$ of the form
\[
F_{K_{1},\dots,K_{m}}=F_{K_{1}}\wedge F_{K_{2}}\wedge\cdots\wedge F_{K_{m}}\in\mathcal{F}^{k}\left(\mathcal{C}_{J_{\lambda}}\right),
\]
are called the \textit{distinguished elements} of the elementary arrangement  $\mathcal{C}_{J_{\lambda}}$.

\begin{lem}
Counted up to permutation of indices potentially changing sign, there
are exactly $\prod_{h=1}^{m}(\lambda_{h}+1)$ distinguished elements
of $\mathcal{C}_{J_{\lambda}}$.
\qed
\end{lem}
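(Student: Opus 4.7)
The statement reduces to a clean combinatorial count once it is verified that every choice of $K_h\subset J_h$ yields a well-defined, nonzero distinguished element, and that distinct choices are actually distinct in $\F^k(\A_{J_\la})$. The plan is to separate these two aspects.

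\textbf{Step 1 (Ordered vs.\ unordered choices).} By the skew-symmetry relation \Ref{skew} in the flag space, permuting the indices of $K_h$ changes $F_{K_h}$ only by a sign, hence changes $F_{K_1,\dots,K_m}$ only by a sign. Up to the equivalence described in the statement I may therefore replace each ordered tuple $K_h$ by the underlying unordered $\la_h$-element subset of the $(\la_h+1)$-element set $J_h$. The number of such subsets is $\binom{\la_h+1}{\la_h}=\la_h+1$, so the total number of candidate equivalence classes is at most $\prod_{h=1}^m(\la_h+1)$.

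\textbf{Step 2 (Each candidate is nonzero).} For a fixed collection of subsets $K_h\subset J_h$, let $j_h^{\star}$ be the unique element of $J_h\setminus K_h$ and set $A_h:=\bigl(\bigcup_{i=1}^h J_i\bigr)\setminus\{j_h^{\star}\}$. Condition (ii) gives $\dim \on{span}\{g_l\}_{l\in A_h}=\la^h=\dim X_h^*(J_\la)$, so $\{g_l\}_{l\in A_h}$ is a basis of $X_h^*(J_\la)$. Passing to the quotient $X_h^*(J_\la)/X_{h-1}^*(J_\la)$, which is $\la_h$-dimensional by condition (i), the classes of $\{g_l\}_{l\in A_h\cap J_h}=\{g_l\}_{l\in K_h}$ must span the quotient, and since $|K_h|=\la_h$ they form a basis of it. Building up the filtration \Ref{filt} one layer at a time shows by induction on $h$ that $\{g_l\}_{l\in K_1\cup\cdots\cup K_h}$ is a basis of $X_h^*(J_\la)$; at $h=m$ we obtain a basis of $X_m^*(J_\la)=X^*$. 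Consequently $\{H_l\}_{l\in K_1\cup\cdots\cup K_m}$ is a collection of $k$ hyperplanes in general position, and
\[
F_{K_1,\dots,K_m}=F(H_{j_1^{(1)}},\dots,H_{j_{\la_1}^{(1)}},\dots,H_{j_1^{(m)}},\dots,H_{j_{\la_m}^{(m)}})
\]
is a nonzero standard basis element of $\F^k(\A_{J_\la})$.

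\textbf{Step 3 (Different choices give different elements).} Since the blocks $J_h$ are pairwise disjoint and the elements of $K_1\cup\cdots\cup K_m$ encode the unordered subsets $K_h\subset J_h$ (one simply intersects with $J_h$), two different collections of unordered $K_h$'s give different $k$-element subsets of $J_\la$, hence different standard basis vectors of $\F^k(\A_{J_\la})$ up to sign. Combined with Step~1 and Step~2, this produces exactly $\prod_{h=1}^m(\la_h+1)$ equivalence classes.

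The only substantive step is Step~2: the conclusion that every $\la_h$-subset of $J_h$ gives an independent set rests squarely on condition (ii), applied with the correct choice $j_h^\star\in J_h\setminus K_h$ at each layer. The remainder of the argument is bookkeeping.
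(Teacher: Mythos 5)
Your proof is correct. The paper states this lemma with no proof at all (it is asserted with an immediate \qed), so there is nothing to compare against; your argument supplies the justification the authors considered obvious, and it is the natural one: conditions (i) and (ii) of the definition guarantee that each choice of $\la_h$-subsets $K_h\subset J_h$ yields an independent $k$-set, hence a nonzero standard basis vector, and disjointness of the $J_h$ makes distinct choices give distinct basis vectors. One small wording slip in Step 2: for $h>1$ the set $A_h$ has $\la^h+h-1>\la^h$ elements, so $\{g_l\}_{l\in A_h}$ is a spanning set of $X_h^*(J_\la)$ but not a basis; since your subsequent quotient argument only uses the spanning property, this does not affect the validity of the proof.
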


For example, in Figure 2 the distinguished elements of the first arrangement are $\pm F_{1,2}, \pm F_{2,3},$ 
$\pm F_{1,3}$ 
and  distinguished elements of the second are $\pm F_{1,3}, \pm F_{1,4}, \pm F_{2,3}, \pm F_{2,4}$.

Let $J_{h}=\{j_{1}^{h},j_{2}^{h},\dots,j_{\lambda_{h}+1}^{h}\}$,
and let $K_{\widehat{j_{i}^{h}}}=\{j_{1}^{h},\dots,\widehat{j_{i}^{h}},\dots, j_{\lambda_{h}+1}^{h}\}$
be an ordered $\lambda_{h}$-element subset of $J_{h}.$ 
The element 
\bean
\label{sing}
s(\A_{J_{\lambda}})
&=&
\left({\sum}_{i=1}^{\lambda_{1}+1}(-1)^{i+1}a_{j_{i}^{1}}
F_{K_{\widehat{j_{i}^{1}}}}\right)\wedge\left({\sum}_{i=1}^{\lambda_{2}+1}
(-1)^{i+1}a_{j_{i}^{2}}F_{K_{\widehat{j_{i}^{2}}}}\right)
\wedge\dots
\\
\notag
&&
\phantom{aa} \dots
\wedge
\left({\sum}_{i=1}^{\lambda_{m}+1}(-1)^{i+1}a_{j_{i}^{m}}F_{K_{\widehat{j_{i}^{m}}}}\right)
\in\mathcal{F}^{k}(\mathcal{C}_{J_{\lambda}})
\eean
is called the \textit{singular element} of $\mathcal{C}_{J_{\lambda}}$.

\begin{lem}
\label{ssing}
We have $s(\A_{J_{\lambda}})\in \Sing_a \FF^k(\A_{J_\la})$.

\end{lem}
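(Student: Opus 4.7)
By Corollary \ref{corORT}, it suffices to show $S^{(a)}(s(\A_{J_\la}),\sum_{j\in J_\la}F_{j,I})=0$ for every independent $(k-1)$-subset $I\subset J_\la$. My plan is to first classify such $I$, then identify the (very few) terms in the double expansion that can actually contribute to this pairing, and finally to see that those contributions cancel in pairs, via the same alternating-sum identity which trivially handles the base case $m=1$ (an arrangement of $k+1$ hyperplanes in $\C^k$ with any $k$ in general position).

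For the classification: conditions (i) and (ii) in the definition of an elementary arrangement imply that the $\la_h+1$ forms $\{g_j\}_{j\in J_h}$ project onto $X_h^*/X_{h-1}^*$ (of dimension $\la_h$) as $\la_h+1$ vectors with any $\la_h$ of them linearly independent. Hence $I$ is independent iff $|I\cap J_h|\le\la_h$ for every $h$, and since $|I|=\sum_h\la_h-1$ there must be a unique ``special'' index $h^*=h^*(I)$ with $|I\cap J_{h^*}|=\la_{h^*}-1$ and $|I\cap J_h|=\la_h$ otherwise. The complement $M=J_\la\setminus I$ therefore consists of one element $\ell_h$ from each $J_h$ with $h\ne h^*$, together with two elements $\ell',\ell''\in J_{h^*}$. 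Next, expanding $s(\A_{J_\la})=s_1\wedge\cdots\wedge s_m$ in the standard basis of $\FF^k(\A_{J_\la})$, each summand has the form $\prod_h\epsilon_h(k_h)\,a_{k_h}\cdot F_{J_1\setminus\{k_1\},\dots,J_m\setminus\{k_m\}}$ indexed by a tuple $(k_1,\dots,k_m)\in\prod_h J_h$, with $\epsilon_h(j_i^h)=(-1)^{i+1}$. Since $S^{(a)}$ is diagonal in the standard basis of $\FF^k$, this summand pairs non-trivially with $F_{j,I}$ only when $\{j\}\cup I=\bigsqcup_h(J_h\setminus\{k_h\})$ as sets, which forces $j\in\{\ell',\ell''\}$, $k_h=\ell_h$ for $h\ne h^*$, and $k_{h^*}$ equal to the element of $\{\ell',\ell''\}$ other than $j$.

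Thus only two summands survive, corresponding to $(j,k_{h^*})=(\ell',\ell'')$ and $(\ell'',\ell')$. Writing $\ell'=j_p^{h^*}$, $\ell''=j_q^{h^*}$ with $p<q$, all factors coming from $h\ne h^*$ -- namely the common coefficient $C=\prod_{h\ne h^*}\epsilon_h(\ell_h)a_{\ell_h}$ and the sign $(-1)^{\la^{h^*-1}}$ produced when moving $j$ past the $h<h^*$ blocks of the concatenated wedge $F_{J_1\setminus\{\ell_1\},\dots,J_{h^*}\setminus\{k_{h^*}\},\dots,J_m\setminus\{\ell_m\}}$ into the tuple $F_{j,I}$ -- are shared between the two contributions and factor out. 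The remaining $J_{h^*}$-dependent signs, produced by the alternation $\epsilon_{h^*}$ of $s_{h^*}$ together with moving $j$ to the front of its own block, simplify in the two cases to $(-1)^{p+q}$ and $(-1)^{p+q+1}$ respectively, so the two contributions sum to $C\,(-1)^{\la^{h^*-1}}\prod_{l\in J_\la}a_l\cdot\bigl((-1)^{p+q}+(-1)^{p+q+1}\bigr)=0$.

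The main obstacle is the careful sign bookkeeping across these three permutations layered together: the intrinsic alternation $\epsilon_h$ inside each $s_h$; the reordering of the concatenated wedge into a single ordered $k$-tuple; and the transposition cost of moving $j$ to the front inside $F_{j,I}$. The key simplification that makes the calculation tractable is that all sign and weight contributions from the $h\ne h^*$ blocks are identical in the two surviving terms and can be factored out, reducing the cancellation to the elementary identity governing the $m=1$ Gale-type base case for an arrangement of $\la_{h^*}+1$ hyperplanes in general position.
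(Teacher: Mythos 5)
Your proof is correct and follows essentially the same route as the paper's: reduce via Corollary \ref{corORT} to orthogonality against the elements $\sum_{j\in J_\la}F_{j,I}$, locate the unique deficient block $h^*$, and observe that exactly two terms of the expanded wedge survive and cancel in pairs; you merely supply the sign bookkeeping that the paper dismisses with ``it is clear that \dots the sum of two opposite terms coming from the $l$-th factor.'' One caveat, which the paper's own proof shares: the assertion that $I$ is independent \emph{if and only if} $|I\cap J_h|\le\la_h$ for all $h$ fails in the ``only if'' direction --- for instance, for type $\la=(2,1)$ with $g_1=t_1$, $g_2=t_2$, $g_3=t_1+t_2$, $g_4=t_3$, $g_5=t_1+t_3$, the set $I=\{4,5\}$ is independent yet contains all of $J_2$. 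This is harmless but should be patched with one line: if $|I\cap J_{h_0}|=\la_{h_0}+1$ for some $h_0$, then every monomial $F_{j,I}$ carries more than $\la_{h_0}$ indices from $J_{h_0}$, while every monomial in the expansion of $s(\A_{J_\la})$ carries exactly $\la_{h_0}$, so the pairing vanishes for support reasons and no cancellation argument is needed there.
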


\begin{proof}
By Corollary \ref{corORT}, the element $s(\A_{J_{\lambda}})$ lies in $ \Sing_A \FF^k(\A_{J_\la})$
if and only if it is orthogonal to each of the elements 
${\sum}_{j\in J}F_{j,i_1,\dots,i_{k-1}}$ labeled by independent subsets $I=\{i_1,\dots,i_{k-1}\}\subset J_\la$.
If  $\{i_1,\dots,i_{k-1}\}\subset J_\la$ is independent, then there is $l\in\{1,\dots,m\}$ such that
$|I\cap J_l|=\la_l-1$  and $|I\cap J_h|=\la_h$ for $h\ne l$. Then it is clear that
$S^{(a)}(s(\A_{J_\la}), \sum_{j\in J}F_{j,i_1,\dots,i_{k-1}})=0$ as the sum of two opposite terms
coming from the $l$-th factor in formula \Ref{sing}.
\end{proof}

The singular element $s(\A_{J_{\lambda}})$ has the following properties. 
It is defined uniquely up to multiplication by $\pm1$, and this sign
depends on the ordering put on each subset $J_{h}$.  Each destinguished element of $\A_{J_\la}$, considered up to  sign, enters the singular element exactly once.

\begin{example}
 In Figure 2 the singular element of the first elementary arrangement is $\pm (a_3 F_{1,2}+a_2 F_{3,1} + a_1 F_{2,3}) $ 
and the singular element of the second is $\pm (a_2F_{1}- a_1F_2)\wedge(a_3F_4-a_4F_3)$.

\end{example}

\subsection{Decomposing Determinants}
\label{dec}

Recall the elements $g_j=\sum_{i=1}^{k}b_j^{i}t_i\in X^{*}$, used in defining the hyperplanes
$H_j$, and the notation $d_{j_{1},\dots,j_{k}}=\det\left(b_{j_{\ell}}^i\right)_{i,\ell=1}^{k}$.

Let $ s_{1},\dots,s_{k}$ be a basis of $X^*$  adjacent
to the filtration $\{X_{h}^{\ast}(J_{\lambda})\}_{h=1}^{m}$ 
in \Ref{filt}, i.e., let $ s_{1},\dots,s_{k}$ be such that for any
$h=1,\dots,m$, the elements
$s_{1},\dots,s_{\lambda^{h}} $ form a basis of $X_{h}^{\ast}(J_{\lambda})$.
Additionally, we select $ s_{1},\dots,s_{k} $ such
that the change of basis matrix from $ t_{1},\dots,t_{k} $
to $s_{1},\dots,s_{k}$ has determinant one.

Let $g_j=\sum_{i=1}^{k}c_j^is_i,$ be the expansion of $g_j$ with respect to the new basis, then
for any $j_1,\dots,j_k\in J_\la$ we have $\det(c_{j_{\ell}}^i)_{i,\ell=1}^{k} = d_{j_1,\dots,j_k}$ and
 $c_{j}^i=0$  for all $j\in J_{h}$,   $i>\lambda^{h}$.

For $h=1,\dots,m$, let $K_{h}=\{ k_{1}^{h},\dots,k_{\lambda_{h}}^{h}\} $
be an ordered $\lambda_{h}$-element subset of $J_{h}$. Consider $K=\left(K_{1},K_{2},\dots,K_{m}\right)$ as
an ordered $k$-element subset of $J_\la$. 
The matrix 
$(c_j^i)^{i=1,\dots,k}_{ j\in K}$
is lower block-triangular. Define the diagonal $\la_h\times\la_h$-blocks as the matrices
$C_{h}=(c_{k_{j}^{h}}^{i})_{j=1,\dots,\la_h}^{ i=\lambda^{h-1}+1,\dots, \lambda^{h}}$.

\begin{lem}
\label{ldec}
Let $K$ be constructed as above. Then, 
$d_{K}=\prod_{h=1}^{m}\det C_{h}$.
\qed
\end{lem}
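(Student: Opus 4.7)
The plan is to reduce the statement to the classical formula for determinants of block-triangular matrices, since everything required has essentially been set up in the paragraph preceding the lemma. Because the change-of-basis matrix from $(t_1,\dots,t_k)$ to $(s_1,\dots,s_k)$ has determinant one, I would first note that $d_K = \det(b_{k_\ell^h}^i)$ equals $\det(c_{k_\ell^h}^i)$, so it suffices to compute the latter determinant.

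Next I would display the matrix $M = (c_j^i)_{i=1,\dots,k}^{\,j\in K}$ as an $m\times m$ grid of blocks. The rows split naturally into consecutive groups $R_h = \{\lambda^{h-1}+1,\dots,\lambda^h\}$ of size $\lambda_h$, and the columns are already ordered by $K = (K_1,K_2,\dots,K_m)$ into groups of sizes $\lambda_1,\dots,\lambda_m$. The key input is the vanishing statement recalled just before the lemma: for $j\in J_{h'}$ we have $c_j^i = 0$ whenever $i > \lambda^{h'}$, because $g_j\in X_{h'}^*(J_\lambda) = \on{span}(s_1,\dots,s_{\lambda^{h'}})$.

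From this I would read off the shape of $M$. For $h > h'$, every row index in $R_h$ is at least $\lambda^{h-1}+1 > \lambda^{h'}$, and every column comes from $K_{h'}\subset J_{h'}$, so the $(h,h')$-block is identically zero; and the $(h,h)$-block is $C_h$ by definition. Hence $M$ is block upper-triangular with diagonal blocks $C_1,\dots,C_m$, and the standard block-triangular determinant formula yields $\det M = \prod_{h=1}^m \det C_h$. Combining with the first step gives $d_K = \prod_{h=1}^m \det C_h$.

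There is no real obstacle here; the entire content is the block-triangularity, which is immediate from the filtration. The only thing to watch is bookkeeping: one must verify that no extra sign appears when passing to the block form. This is automatic because the row ordering $R_1, R_2, \dots, R_m$ is the original ordering $1, 2, \dots, k$, and the column ordering is the chosen ordering of $K = (K_1,\dots,K_m)$, so the block partition is a pure grouping rather than a permutation.
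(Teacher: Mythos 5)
Your proof is correct and matches what the paper intends: the paper states the lemma without proof, having already set up the block-triangular structure of $(c_j^i)_{j\in K}$ in the preceding paragraph, and your argument simply makes the block-triangular determinant computation and the sign bookkeeping explicit. (The paper calls the matrix "lower" block-triangular while you call it "upper"; this is only a transposition convention and is immaterial for the determinant.)
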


\subsection{Auxiliary Arrangements}
\label{sAA}

For $h=1,\dots,m$, let $Y_{h}$ be a vector space of dimension $\lambda_{h}$
with coordinates $s_{1,h},\dots,s_{\lambda_{h},h}$. 
For $j\in J_h$, let
 $g_{j,h}$
be elements of $Y_{h}^{\ast}$ given by the formula $g_{j,h}=\sum_{i=1}^{\lambda_{h}}c_{j,h}^{i}s_{i,h}$
where $c_{j,h}^i:=c_{j}^{\lambda^{h-1}+i}$ and the  $c_{j}^{\lambda^{h-1}+i}$ are
the coefficients introduced in Section \ref{dec}.

 Define $\mathcal{C}_{J_{\lambda},h}=\{H_{j,h}\}_{j\in J_{h}}$
to be the following weighted arrangement of affine hyperplanes in $Y_{h}$. Each
hyperplane $H_{j,h}$ has weight $a_j$ and is defined by the equation $g_{j,h}(s_{1,h},\dots,s_{\lambda_{h},h})+z_j=0$,
where $z_j$ and $a_j$ are the same as in Section  \ref{sdef}.
We call $\mathcal{C}_{J_{\lambda},h}$ the \textit{auxiliary
arrangement of type ${h}$} associated with the elementary arrangement $\A_{J_\la}$.

For an ordered $\la_h$-element subset $I=\{j_1,\dots,j_{\la_h}\}\subset J_h$ denote
$d_{I;h} =\det(c_{j_l,h}^i)_{i,l=1}^{\la_h}$. Let $J_h=\{j_1^h,\dots,j_{\la_h+1}^h\}$.  
By the construction, the linear combination 
\\
${\sum}_{i=1}^{\la_h+1}(-1)^{i+1} d_{j_1,\dots,\widehat{j_i},\dots,j_{\la_h+1};h} g_{j_i^h}$ lies
in $X^*_{h-1}$. 
Choose some numbers $e_j, j\in \cup_{i=1}^{h-1}J_i$,  such that the linear combination
\bean
\label{lce}
{\sum}_{i=1}^{\la_h+1}(-1)^{i+1} d_{j_1,\dots,\widehat{j_i},\dots,j_{\la_h+1};h} g_{j_i^h}
+{\sum}_{ j\in \cup_{i=1}^{h-1}J_i}e_j g_j =0
\eean
as an element of the space $X^*$. Such numbers exist since $e_j, j\in \cup_{i=1}^{h-1}J_i$, span
$X^*_{h-1}$. Denote
\bean
\label{fh}
f_{\A_{J_\la,h}} = {\sum}_{i=1}^{\la_h+1}(-1)^{i+1} d_{j_1,\dots,\widehat{j_i},\dots,j_{\la_h+1};h} z_{j_i^h}
+{\sum}_{ j\in \cup_{i=1}^{h-1}J_i}e_j z_j.
\eean

We call the function
\bean
\label{1aux}
P_{\A_{J_{\lambda},h}}
=
\frac{\prod_{j\in J_{h}}a_{j}}{(2\lambda_{h})!}
\frac{(f_{\A_{J_{\lambda},h}})^{2\lambda_{h}}}
{(\prod_{i=1}^{\lambda_{h}+1}d_{j_1^h,\dots,\widehat{j_i^h},\dots,j_{\la_h+1}^h;h})^{2}}
\eean
 a {\it prepotential  of first kind} of the auxiliary arrangement $\A_{J_h}$.
We call the function
$P_{\A_{J_\la}} = \prod_{h=1}^m P_{\A_{J_{\lambda},h}}$
 a {\it prepotential  of first kind} of the elementary arrangement $\A_{J_\la}$.
We call the function
\bean
\label{2ell}
Q_{\A_{J_\la}} = \ln (f_{\A_{J_{\lambda},1}}) {\prod}_{h=1}^m P_{\A_{J_{\lambda},h}}
\eean
a {\it prepotential  of second kind} of the elementary arrangement $\A_{J_\la}$.
The prepotentials are not unique due to the choice of the numbers $e_j$ above.

\subsection{Elementary subarrangements} 
Let us return to the situation of Section \ref{secT Tr}. For $x\in \C^n-\Delta$ consider the weighted arrangement
$\A(x)$ with normal crossings.

Let $\A_{J_\la}(x)=\{H_i(x)\}_{i\in J_\la}$ be an elementary 
subarrangement of the arrangement $\A(x)$ of type $\la=(\la_1,\dots,\la_m)$. 
Recall that $J_\la=\cup_{h=1}^mJ_h \subset J$ with subsets $J_h$ satisfying properties described in Section 
\ref{sdef}. According to those properties if a subarrangement $\A_{J_\la}(x)=\{H_i(x)\}_{i\in J_\la}$ is an elementary 
subarrangement of $\A(x)$ for some
$x\in \C^n-\Delta$, then the subarrangement $\A_{J_\la}(x')=\{H_i(x')\}_{i\in J_\la}$, associated with the same $J_\la$,
 is an elementary 
subarrangement of $\A(x')$ for every $x'\in \C^n-\Delta$.

\begin{example}
If $\A(x)$ is a generic arrangement, then all elementary subarrangements are of type $\la=(k)$, they are given by $k+1$-element subsets of $J$.

\end{example}

\smallskip

Since $\A(x)$ is with normal crossings we have a natural embeddings of graded exterior algebras
$\FF(\A_{J_\la}(x))\subset \FF(\A(x))$ and an embedding of spaces
 $\Sing_A \FF^k(\A_{J_\la}(x)) \subset \Sing_A \FF^k(\A(x))$. In particular,
the singular element $s(\A_{J_\la}(x))$ of $\A_{J_\la}(x)$ can be considered as an element of $\Sing_A \FF^k(\A(x))$.

 Recall that for $h=1,\dots,m$, there are auxiliary
arrangements $\mathcal{C}_{J_{\lambda},h}(x)$ associated with $\mc C_{J_{\lambda}}(x)$.
For $h=1,\dots, m-1$, we define the {\it  weight of the auxiliary arrangement} $\mathcal{C}_{J_{\lambda},h}(x)$ with respect to
$\A(x)$ as the sum
$a(J_{\lambda}, J ,h)=\sum_{i\in J\, \on{such\,that}\,g_{i}\not\in X_{h}^{\ast}(J_{\lambda})}a_{i}$
and  the {\it weight of the elementary subarrangement} $\mc C_{J_{\lambda}}(x)$ with respect to
$\A(x)$ as  the product
$a(J_{\lambda},J)=a_J \cdot\prod_{h=1}^{m-1}a\left(J_{\lambda},J,h\right)$.
We define the {\it potentials of first and second kind} of the family of arrangements $\A(x)$, $x\in \C^n-\Delta$, to be respectively the following functions on $\C^n-\Delta$:
\bean
\label{P1}
P(x_1,\dots,x_n) &=&\sum
 \frac {1}{a(J_\la,J)}P_{\A_{J_\la}}(x_1,\dots,x_n),
\\
\label{P2}
Q(x_1,\dots,x_n) &=&\sum
\frac {a_J}{a(J_\la,J)} Q_{\A_{J_\la}}(x_1,\dots,x_n),
\eean
where the sums are over all elementary subarrangements $\A_{J_\la}(x)$ of $\mathcal{C}(x)$ and $P_{J_\la}(x), Q_{J_\la}(x)$ 
are the prepotentials of first and second kind, respectively,  of the elementary subarrangements $\A_{J_\la}(x)$ of the arrangement $\A(x)$.  The potentials are not uniques, since the prepotentials are not unique, see
Section \ref{sAA}.

\begin{example} The second arrangement in Figure 2  has three elementary subarrangements with $J_\la$ being
$\{1,3,4\}$ or $ \{2,3,4\}$ or $\{1,2\}\cup\{3,4\}$ of types $\la = (2), (2), (1,1)$, respectively.
The potential of second kind for that arrangement is
\bea
&&
Q(z_1,z_2,z_3, z_4) = a_1a_3a_4 \ln(f_{\{1,3,4\},1}(z_1,z_2,z_3))\frac{(f_{\{1,3,4\},1}(z_1,z_3,z_4))^4}{4!\,(d_{1,3}d_{3,4}d_{4,1})^2}
\\
&&
\phantom{a}
+ a_2a_3a_4 \ln(f_{\{2,3,4\},1}(z_2,z_3,z_4))\frac{(f_{\{2,3,4\},1}(z_2,z_3,z_4))^4}{4!\,(d_{2,3}d_{3,4}d_{4,2})^2}
\\
\notag
&&
\phantom{aa}
+ \frac{a_1a_2a_3a_4}{a_3+a_4} \ln(f_{\{1,2\}\cup\{3,4\},1}(z_1,z_2)) \frac {(f_{\{1,2\}\cup\{3,4\},1}(z_1,z_2))^2}{2!(d_{1;1}d_{2;1})^2}
 \frac {(f_{\{1,2\}\cup\{3,4\},2}(z_1,z_2,z_3,z_4))^2}{2!(d_{3;2}d_{4;2})^2} ,
\eea
c.f. this formula with the formula in the example of Section \ref{Sr}.

\end{example}

The potentials $P, Q$ for families of generic arrangements were constructed in \cite{V6}, c.f. \cite{V9}.

\begin{lem}
\label{lemINV}
For any independent subset $\{s_1,\dots,s_{k-1}\}\subset J$, we have
\bean
\label{INV}
{\sum}_{j\in J} d_{j,s_1,\dots,s_{k-1}}\frac{\der P}{\der z_j} = {\sum}_{j\in J} d_{j,s_1,\dots,s_{k-1}}\frac{\der Q}{\der z_j} =0.
\eean
\end{lem}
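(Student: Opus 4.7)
The plan is to introduce the first-order differential operator
\begin{equation*}
D_{\bs s} \;:=\; {\sum}_{j\in J} d_{j,s_1,\dots,s_{k-1}}\,\frac{\der}{\der z_j}
\end{equation*}
associated with the independent tuple $(s_1,\dots,s_{k-1})$, and to show that $D_{\bs s}$ annihilates each prepotential $P_{\A_{J_\la}}$ and $Q_{\A_{J_\la}}$ individually. Since the coefficients $1/a(J_\la,J)$ and $a_J/a(J_\la,J)$ in \Ref{P1}, \Ref{P2} are independent of $z$, summing over elementary subarrangements will then yield $D_{\bs s}P = D_{\bs s}Q = 0$.

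The key step is the identity $D_{\bs s}f_{\A_{J_\la,h}} = 0$ for every elementary subarrangement $\A_{J_\la}$ and every $h=1,\dots,m$. The proof exploits the linearity of the map
\begin{equation*}
\phi_{\bs s}:X^{\ast}\to\C,\qquad g\mapsto\det(g,g_{s_1},\dots,g_{s_{k-1}}),
\end{equation*}
where the determinant is computed in any fixed basis of $X^{\ast}$; this map is linear because it is a multilinear form with the last $k-1$ arguments fixed, and it satisfies $\phi_{\bs s}(g_j)=d_{j,s_1,\dots,s_{k-1}}$. Applying $\phi_{\bs s}$ to the defining vanishing relation \Ref{lce} of $f_{\A_{J_\la,h}}$ in $X^{\ast}$ converts it into a numerical identity whose left-hand side, in view of \Ref{fh}, is precisely $\sum_{j\in J}(\der f_{\A_{J_\la,h}}/\der z_j)\,d_{j,s_1,\dots,s_{k-1}}=D_{\bs s}f_{\A_{J_\la,h}}$. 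The terms with $j\notin J_h\cup(\cup_{l<h}J_l)$ do not contribute because $f_{\A_{J_\la,h}}$ is independent of those $z_j$'s.

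Once this is established, the chain rule finishes everything in one line: each $P_{\A_{J_\la}}$ is a polynomial in the linear forms $f_{\A_{J_\la,1}},\dots,f_{\A_{J_\la,m}}$ (with coefficients constant in $z$), and each $Q_{\A_{J_\la}}=\ln(f_{\A_{J_\la,1}})\,P_{\A_{J_\la}}$ is a smooth function of these same linear forms on $\C^n-\Delta$. Hence
\begin{equation*}
D_{\bs s}P_{\A_{J_\la}} \;=\; {\sum}_{h=1}^{m}\frac{\der P_{\A_{J_\la}}}{\der f_{\A_{J_\la,h}}}\,D_{\bs s}f_{\A_{J_\la,h}}\;=\;0,
\end{equation*}
and the same for $Q_{\A_{J_\la}}$. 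The only genuine subtlety, and the point I would make sure to state cleanly, is the interpretation of $\phi_{\bs s}$ as a linear functional on $X^{\ast}$; there is no serious obstacle beyond that, because the elementary-subarrangement decomposition \Ref{P1}, \Ref{P2} reduces the lemma to a single linear relation in $X^{\ast}$ that holds by construction.
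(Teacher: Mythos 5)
Your proof is correct and follows essentially the same route as the paper: reduce to each elementary subarrangement, then to the identity $\sum_{j\in J} d_{j,s_1,\dots,s_{k-1}}\,\der f_{\A_{J_\la,h}}/\der z_j=0$ for each $h$. The only difference is that the paper declares this last identity ``clear,'' whereas you supply the justification explicitly (linearity of $g\mapsto\det(g,g_{s_1},\dots,g_{s_{k-1}})$ applied to relation \Ref{lce}), which is exactly the intended argument.
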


\begin{proof} It is enough to prove that $\sum_{j\in J} d_{j,s_1,\dots,s_{k-1}}\frac{\der P_{\A_{J_\la}(z)}}{\der z_j}
=\sum_{j\in J} d_{j,s_1,\dots,s_{k-1}}\frac{\der Q_{\A_{J_\la}(z)}}{\der z_j}=0$ for every elementary 
subarrangement $\A_{J_\la}(z)$ of $\A(x)$. To prove that, it is enough to prove that 
$\sum_{j\in J} d_{j,s_1,\dots,s_{k-1}}\frac{\der 
f_{\A_{J_\la,h}}}{\der z_j}=0$
 for any $h$, see formula  \Ref{fh}, but that is clear.
\end{proof}

\section{Orthogonal projection}
\label{OPR}

\subsection{Formula for orthogonal projection} 
\label{FOP}
Recall the objects  of Section \ref{secT Tr}.
For  $x\in\C^n-\Delta$, we denoted $V=\FF^k(\A(x))$, $\on{Sing}_a V=\on{Sing}_a\FF^k(\A(x))$, $F_{j_1,\dots,j_k} = F_{j_1,\dots,j_k}(x)$.
Let $\pi : V\to \Sing_a V$ be the orthogonal projection with respect to $S^{(a)}$.

For an ordered independent subset $I=\{i_1,\dots,i_k\}\subset J$, let $E_I$ be the set of all elementary subarrangements $\A_{J_\la}(x)$ of $\A(x)$ which have $F_I$ as a distinguished element. Let $\A_{J_\la}(x) \in E_I$ be such a subarrangement. Let $s(\A_{J_\la}(x))$ be the singular element of $\A_{J_\la}(x)$ considered as
an element of $\Sing_aV$. The singular element 
is defined up to multiplication by $\pm1$. We fix the sign so that the distinguished element $F_I$ enters
$s(\A_{J_\la}(x))$ with coefficient 1. 

\begin{thm}
\label{thmORT}
For an independent ordered subset $I=\{i_1,\dots,i_k\} \subset J$ we have
\bean
\label{FORT}
\pi(F_I) = {\sum}_{C_{J_{\lambda}}(x) \in E_I}\frac{1}{a(J_{\lambda},J)} s(\A_{J_{\lambda}}(x))\ \in \Sing_aV.
\eean
\end{thm}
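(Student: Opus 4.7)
The strategy is to verify the two defining properties of the orthogonal projection: the right-hand side lies in $\Sing_a V$, and $F_I$ minus the right-hand side is $S^{(a)}$-orthogonal to every element of $\Sing_a V$.

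Membership is immediate. By Lemma~\ref{ssing} each singular element $s(\A_{J_\la}(x))$ lies in $\Sing_a \F^k(\A_{J_\la}(x))$, and the natural inclusion $\F(\A_{J_\la}(x)) \hookrightarrow \F(\A(x))$, available under the normal-crossings assumption, embeds this into $\Sing_a V$; hence the right-hand side lies in $\Sing_a V$.

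For the orthogonality, I would pair $F_I - \text{RHS}$ against a convenient spanning family of $\Sing_a V$. Using the marked elements $v_J = \pi(F_J)$ indexed by independent ordered $k$-subsets $J$, together with the identity $S^{(a)}(u, \pi(F_J)) = S^{(a)}(u, F_J)$ valid for any $u \in \Sing_a V$, the task reduces to verifying
\[
S^{(a)}(F_I, v_J) = S^{(a)}(\text{RHS}, F_J)
\]
for every independent ordered $J$. Since $S^{(a)}$ is diagonal up to sign on the standard basis of $V$, the pairing $S^{(a)}(s(\A_{J_\la}(x)), F_J)$ is nonzero precisely when $J$ is a distinguished element of $\A_{J_\la}(x)$; expanding the wedge in~\Ref{sing} one finds that in this case the pairing equals $\pm a_{J_\la}$ (the full product of weights in $J_\la$), with the sign determined by the orderings. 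The right-hand side of the displayed equation then becomes a sum of terms $\pm a_{J_\la}/a(J_\la, J)$ over those elementary subarrangements in $E_I$ for which $J$ is also distinguished.

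The core step is then a combinatorial identity matching this weighted sum to $S^{(a)}(F_I, v_J)$. The product form $a(J_\la, J) = a_J \prod_{h=1}^{m-1} a(J_\la, J, h)$ is tailored precisely so that the sum telescopes along the filtration $X_1^*(J_\la) \subset \cdots \subset X_m^*(J_\la)$: after fixing the type $\la$ and the assignment of the common elements of $I$ and $J$ to blocks, one sums successively over the free choice of the extra hyperplane in each block, using partition-of-unity identities supplied by the definition of $a(J_\la, J, h)$. The base case $k = 1$ is immediate: $E_I = \{\{i, j\} : j \neq i\}$, $a(\{i,j\}, J) = a_J$, and the formula reproduces the standard projection $\pi(F_i) = F_i - (a_i/a_J)\sum_j F_j$. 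The main obstacle is the sign bookkeeping: the signs from the wedge expansion of $s(\A_{J_\la})$, the skew-symmetry $F_{\sigma(i_1),\dots,\sigma(i_k)} = (-1)^\sigma F_{i_1,\dots,i_k}$, and the alternating sums in~\Ref{sing} must align across the various subarrangements in $E_I$ so that contributions where $J \neq I$ as unordered sets cancel pairwise, while those with $J = I$ sum to give the correct value $a_I = a_{i_1}\cdots a_{i_k}$.
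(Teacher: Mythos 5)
Your membership argument (Lemma \ref{ssing} plus the embedding $\F^k(\A_{J_\la}(x))\subset\F^k(\A(x))$ for normal crossings) is correct and is the same as the paper's. The orthogonality argument, however, is circular as designed. You propose to show $F_I-\mathrm{RHS}\perp\Sing_aV$ by pairing against the spanning family $v_J=\pi(F_J)$, and you correctly replace $S^{(a)}(\mathrm{RHS},v_J)$ by $S^{(a)}(\mathrm{RHS},F_J)$ because $\mathrm{RHS}\in\Sing_aV$. But the other term, $S^{(a)}(F_I,v_J)$, cannot be simplified the same way, since $F_I\notin\Sing_aV$; it equals $S^{(a)}(v_I,v_J)$, the Gram matrix of the very projections the theorem is meant to determine. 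No independent expression for this quantity is available at this point --- it is precisely what Theorem \ref{thmP1} later computes \emph{from} Theorem \ref{thmORT}. So your ``core step,'' matching the weighted sum of terms $\pm\,a_{J_\la}/a(J_\la,J)$ to $S^{(a)}(F_I,v_J)$, has nothing on the other side of the equation to match against, and the verification presupposes its own conclusion.

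The repair, which is the paper's actual route, is to test orthogonality against the \emph{known} explicit description of $\Sing_aV^{\perp}$ rather than against an unknown spanning set of $\Sing_aV$: for unbalanced weights $\Sing_aV^{\perp}=d\,\FF^{k-1}(\A(x))$ is spanned by the elements $\sum_{j\in J}F_{j,l_1,\dots,l_{k-1}}$, so it suffices to exhibit $F_I-\mathrm{RHS}$ explicitly as a linear combination of these. The paper does this constructively in $m$ stages: at stage $h$ one adds to the running expression a linear combination of elements $\sum_{j\in J}F_{j,l_1,\dots,l_{k-1}}$ so as to peel off the singular elements of the elementary subarrangements in $E_I$ with at most $h$ blocks, leaving a remainder $R_h$ that vanishes after $m$ steps. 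Your ``telescoping along the filtration using partition-of-unity identities supplied by $a(J_\la,J,h)$'' is in substance this same block-by-block construction, and your $k=1$ base case is correct; the combinatorial content of your plan is salvageable once it is re-aimed at producing an explicit element of $d\,\FF^{k-1}(\A(x))$ instead of an identity involving the unknown pairing $S^{(a)}(F_I,v_J)$.
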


\begin{cor}
\label{cGen} The space $\Sing _aV$ is generated by singular elements of elementary subarrangements.
\qed
\end{cor}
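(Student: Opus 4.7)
The strategy reduces the claim to a computation in the Frobenius algebra $\OCx$ via the isomorphism $\nu(x)\colon\Sing_a V\to\OCx$ of Theorem \ref{1Mn thm}. Viewed on all of $V$, the map $\nu_{\A(x)}$ has kernel $(\Sing_a V)^\perp$, so once we verify that $R:=\sum_{\A_{J_\la}(x)\in E_I}\frac{1}{a(J_\la,J)}s(\A_{J_\la}(x))$ lies in $\Sing_a V$, the identity $R=\pi(F_I)$ will follow from $\nu_{\A(x)}(R)=\nu_{\A(x)}(F_I)=w_I$.

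For the first point, each $s(\A_{J_\la}(x))$ belongs to $\Sing_a\FF^k(\A_{J_\la})$ by Lemma \ref{ssing}, and the natural embedding $\FF^k(\A_{J_\la})\hookrightarrow V$ carries singular elements to singular ones: for an independent $(k-1)$-subset $I'\subset J$, the pairing $S^{(a)}(F,\sum_{j\in J}F_{j,I'})$ of an element $F\in\FF^k(\A_{J_\la})$ with the marked relation is nonzero only when $|I'\cap J_h|=\la_h$ for every block except one $h_0$ with $|I'\cap J_{h_0}|=\la_{h_0}-1$; in that case the sum over $j\in J$ collapses to the sum over $j\in J_{h_0}$, which vanishes because $F$ is singular in $\FF^k(\A_{J_\la})$.

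For the key identity $\nu_{\A(x)}(R)=w_I$, one uses that $\mc S^{(a)}$ is a graded algebra homomorphism in the normal-crossings case, giving $\mc S^{(a)}(s(\A_{J_\la}))=\prod_{j\in J_\la}a_j\cdot\prod_{h=1}^m\Psi_h$ with $\Psi_h=\sum_{i=1}^{\la_h+1}(-1)^{i+1}\om_{K_{\hat{j_i^h}}}$. Passing to the adjacent basis $s_1,\ldots,s_k$ of Section \ref{dec} and applying the linear relation \Ref{lce} modulo $X_{h-1}^*$ rewrites the top-degree part of $\Psi_h$ as $\frac{f_{\A_{J_\la,h}}-\sum_{j\in\cup_{l<h}J_l}e_jf_j}{\prod_{j\in J_h}f_j}\,ds_{\la^{h-1}+1}\wedge\cdots\wedge ds_{\la^h}$. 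Expanding $\prod_h\Psi_h$, dividing by $dt_1\wedge\cdots\wedge dt_k$, and projecting to $\OCx$ expresses $\nu_{\A(x)}(s(\A_{J_\la}))$ as a combination of the marked elements $w_L$ for $L$ ranging over distinguished subsets of $\A_{J_\la}$.

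Finally, one sums over $\A_{J_\la}\in E_I$ and telescopes level by level using the marked relation \Ref{pREL}: for each $h$ from $m$ down to $1$ and each fixed choice of lower blocks $J_1,\ldots,J_{h-1}$, the inner sum over admissible $J_h$, weighted by $1/a(J_\la,J,h)$, is absorbed into a single application of \Ref{pREL}, with the factor $a(J_\la,J,h)=\sum_{i\,:\,g_i\notin X_h^*}a_i$ canceling the weight exactly. After this telescoping only $w_I$ normalized by $a_J=a(J_\la,J,0)$ survives, consistent with $a(J_\la,J)=a_J\prod_{h=1}^{m-1}a(J_\la,J,h)$. The main obstacle is this telescoping: for every distinguished $L\neq I$ the contributions from different $\A_{J_\la}\in E_I$ must cancel through \Ref{pREL}, while one simultaneously controls the alternating signs from the flag differential, the $e_j$-corrections generated by \Ref{lce} at levels $h>1$, and the appearance of the factors $a(J_\la,J,h)$.
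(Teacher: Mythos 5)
Your argument is really a proof of Theorem \ref{thmORT}, from which the corollary is immediate (the projections $\pi(F_I)=v_I$ span $\Sing_aV$); that is also how the paper obtains it, so the comparison to make is with the paper's proof of Theorem \ref{thmORT}. There the identity \Ref{FORT} is established entirely inside $V$: starting from $F_I$ one adds, in $m$ successive steps, explicit combinations of the elements $\sum_{j\in J}F_{j,l_1,\dots,l_{k-1}}\in(\Sing_aV)^{\perp}$ until the remainder vanishes and the weighted sum of singular elements is left. You instead push everything through $\nu_{\A(x)}$: since $\ker\nu_{\A(x)}=(\Sing_aV)^{\perp}$ and $\nu_{\A(x)}$ is injective on $\Sing_aV$, it suffices to check (a) that $R$ is singular and (b) that $\nu_{\A(x)}(R)=w_I=\nu_{\A(x)}(F_I)$. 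Part (a) is done correctly, and your verification that the embedding $\FF^k(\A_{J_\la})\hookrightarrow V$ carries $\Sing_a\FF^k(\A_{J_\la})$ into $\Sing_aV$ supplies a detail the paper only asserts. The frame of (b) is also sound, and the computation of $\mc S^{(a)}(s(\A_{J_\la}))$ as $\prod_{j\in J_\la}a_j\cdot\prod_h\Psi_h$ is right (for normal crossings $\mc S^{(a)}$ is indeed multiplicative on standard basis elements). What this buys is a cleaner target: a single identity among the marked elements $w_L$ in the quotient algebra, where the relations \Ref{pREL} may be used freely, instead of having to exhibit the correction terms in $V$ explicitly.

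The caveat is that the identity $\nu_{\A(x)}(R)=w_I$ --- your ``telescoping'' --- is exactly where all the combinatorial content lives, and you describe it without carrying it out; you yourself label it the main obstacle. Note that the relations \Ref{pREL} are precisely the $\nu$-images of the elements $\sum_{j\in J}F_{j,l_1,\dots,l_{k-1}}$ that the paper adds at each of its $m$ steps, so your level-by-level cancellation is the mirror image of the paper's level-by-level construction, with the same signs, the same weights $a(J_\la,J,h)$, and the same corrections from \Ref{lce} to control. The scheme does close up in examples: for the second arrangement of Figure 2 one finds $\nu(s(\A_{\{1,3,4\}}))+\nu(s(\A_{\{2,3,4\}}))=a_J\,w_{3,4}$ using the relations $p_1+p_2=p_3$ and $p_3+p_4=0$ coming from \Ref{pREL}, and similarly for $\pi(F_{2,3})$ the type-$(1,1)$ contribution collapses against the type-$(2)$ one. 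So the route is viable and genuinely different in packaging, but as written the proof is incomplete at its central step: you have relocated the paper's $m$-step induction into $\OCx$ rather than avoided it, and that induction --- with its sign bookkeeping and the cancellation of the factors $a(J_\la,J,h)$ --- still has to be written down before the argument is a proof.
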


Notice that the singular element of an elementary subarrangement in $\C^k$ is a linear combination of at most $2^k$ basis vectors
$F_{l_1,\dots,l_k}\in V$, while the dimension of $V$ could be arbitrarily big  and grow with $n$.

\begin{example}  For the second arrangement in Figure 2,  we have
\bea
&&
\pi(F_{3,4})= \frac 1{a_1+a_2+a_3+a_4}\Big( (a_1F_{3,4}+ a_3F_{4,1}+a_4F_{1,3})
+ (a_2F_{3,4}+ a_3F_{4,2}+a_4F_{2,3})\Big),
\\
&&
\pi(F_{2,3})= \frac 1{a_1+a_2+a_3+a_4}\Big(
(a_4F_{2,3}+ a_2F_{3,4}+ a_3F_{4,2})
\\
&&
\phantom{aaaaaaaaaaaaaaaaaaaaaaaa}
+\frac 1{a_3+a_4} (a_1F_2-a_2F_1)\wedge (a_4F_3-a_3F_4)\Big),
\eea
where $(a_1F_2-a_2F_1)\wedge (a_4F_3-a_3F_4) = a_1a_4F_{2,3} - a_1a_3F_{2,4} -a_2a_4F_{1,3}+ a_1a_3F_{1,4}$.

\end{example} 

\subsection{Proof of Theorem \ref{thmORT}}
Recall that every element of the form $\sum_{j\in J} F_{j,l_1,\dots,l_{k-1}}$ is orthogonal to $\Sing_aV$.
 In order to construct $\pi(F_I)$ from $F_I$ we add to $F_I$ a linear combination of  elements of the form $\sum_{j\in J} F_{j,l_1,\dots,l_{k-1}}$
so that the result is a linear combination of the singular elements of elementary subarrangements  $\A_{J_\la}(x)$.
That means that the result lies in $\Sing_aV$ by Lemma \ref{ssing}.

The transition from  $F_I$ to $\pi(F_I)$ is done in $k$ steps and this  reasoning is
by induction on the number $m$ appearing in the presentation $\la=(\la_1,\dots,\la_m)$. As the first step
 we add to $F_I$ a linear combination of elements of the form $\sum_{j\in J} F_{j,l_1,\dots,l_{k-1}}$ and transform $F_I$ to the sum
\\
$ \sum_{C_{J_{\lambda}}(x) \in E_I \,\on{with}\, m=1}\frac{1}{a(J_{\lambda},J)} s(\A_{J_{\lambda}}(x)) + R_1$, where
$R_1$ is a remainder. Then we add a new linear combination of elements of the form $\sum_{j\in J} F_{j,l_1,\dots,l_{k-1}}$ and transform the result to the sum $ \sum_{C_{J_{\lambda}}(x) \in E_I \,\on{with}\, m\leq 2}\frac{1}{a(J_{\lambda},J)} s(\A_{J_{\lambda}}(x)) + R_2$, and
so on.  After $m$ steps the result will be the right-hand side in \Ref{FORT} and there will be no remainder.

We illustrate that reasoning by considering the case $k=3$.  We construct the orthogonal projection of the element $F_{1,2,3}$,
 which could be an arbitrary basis vector of $V$ after reordering  hyperplanes.  Formula \Ref{FORT} says  
\bean
\label{3d orth}
\pi(F_{1,2,3}) &=& \Si_{1,2,3}\ +\ \Si_{1,2;3} + \Si_{1,3;2} + \Si_{2,3;1}\  +\ \Si_{1;2,3}+ \Si_{2;1,3}+\Si_{3;1,2}
+
\\
&+&
 \Si_{1;2;3} + \Si_{1;3;2} + \Si_{2;1;3}+ \Si_{2;3;1} + \Si_{3;1;2} + \Si_{3;2;1},
\notag
\eean
where
\bea
\Si_{1,2,3} := \frac 1{a_J}\sum{}^{1,2,3} (a_jF_{1,2,3}-a_1F_{j,2,3} + a_2F_{1,j,3}-a_3F_{1,2,j}),
\eea
\bea
\Si_{1,2;3} := \frac {1}{a_Ja(1,2)}\sum{}^{1,2}(a_jF_{1,2}-a_1F_{j,2} + a_2F_{j,1})\wedge \sum{}^{1,2;3}(a_hF_3 - a_3F_h),
\eea
\bea
\Si_{1,3;2} := \frac {-1}{a_Ja(1,3)}\sum{}^{1,3}(a_jF_{1,3}-a_1F_{j,3} + a_2F_{j,1})\wedge \sum{}^{1,3;2}(a_hF_2 - a_2F_h),
\eea
\bea
\Si_{2,3;1} := \frac {1}{a_Ja(2,3)}\sum{}^{2,3}(a_jF_{2,3}-a_2F_{j,3} + a_3F_{j,2})\wedge \sum{}^{2,3;1}(a_hF_1 - a_1F_h),
\eea
\bea
\Si_{1;2,3} := \frac {1}{a_Ja(1)}\sum{}^{1}
(a_jF_{1}-a_1F_{j})\wedge \sum{}^{1;2,3} (a_hF_{2,3}  - a_2F_{h,3} + a_3 F_{h,2}),
\eea
\bea
\Si_{2;1,3} := \frac {-1}{a_Ja(2)}
\sum{}^{2}
(a_jF_{2}-a_2F_{j})\wedge \sum{}^{2;1,3} (a_hF_{1,3}  - a_1F_{h,3} + a_3 F_{h,1}),
\eea
\bea
\Si_{3;1,2} :=  \frac {1}{a_Ja(3)}
\sum{}^{3}
(a_jF_{3}-a_3F_{j})\wedge \sum{}^{3;1,2} (a_hF_{1,2}  - a_1F_{h,2} + a_2 F_{h,1}),
\eea
\bea
\Si_{1;2;3}:
=
 \frac {1}{a_Ja(1)a(1,2)}
\sum{}^{1}
(a_jF_{1}-a_1F_{j})\wedge \sum{}^{1;2} (a_hF_{2}  - a_2F_{h})\wedge \sum{}^{1,2;3}(a_iF_3-a_3F_i),
\eea
\bea
\Si_{1;3;2}:
=
 \frac {-1}{a_Ja(1)a(1,3)}
\sum{}^{1}
(a_jF_{1}-a_1F_{j})\wedge \sum{}^{1;3} (a_hF_3  - a_3F_h)\wedge \sum{}^{1,3;2}(a_iF_2-a_2F_i),
\eea
\bea
\Si_{2;3;1}: =
 \frac {1}{a_Ja(2)a(2,3)}
\sum{}^{2}
(a_jF_{2}-a_2F_{j})\wedge \sum{}^{2;3} (a_hF_3  - a_3F_h)\wedge \sum{}^{2,3;1}(a_iF_1-a_1F_i),
\eea
\bea
\Si_{2;1;3}:
=
 \frac {-1}{a_Ja(2)a(2,1)}
\sum{}^{2}
(a_jF_{2}-a_2F_{j})\wedge \sum{}^{2;1} (a_hF_1  - a_1F_h)\wedge \sum{}^{2,1;3}(a_iF_3-a_3F_i),
\eea
\bea
\Si_{3;2;1}:
=
 \frac {-1}{a_Ja(3)a(2,3)}
\sum{}^{3}
(a_jF_{3}-a_3F_{j})\wedge \sum{}^{3;2} (a_hF_2  - a_2F_h)\wedge \sum{}^{3,2;1}(a_iF_1-a_1F_i),
\eea
\bea
\Si_{3;1;2}:
=
 \frac {1}{a_Ja(3)a(1,3)}
\sum{}^{3}
(a_jF_{3}-a_3F_{j})\wedge \sum{}^{3;1} (a_hF_1  - a_1F_h)\wedge \sum{}^{3,1;2}(a_iF_2-a_2F_i).
\eea
In these  formulas we use the following notations.

We denote
$a(h,l) = \sum a_j$, where the sum is over all $j\in J$ such that $g_j\notin \on{span}(g_h,g_l)$.  We
denote
$a(h) = \sum a_j$, where the sum is over all $j\in J$ such that $g_j\notin \on{span}(g_h)$.

 The sum $\sum{}^{1,2,3}$ is over all $j\in J$ such that the subset $\{j,1,2,3\}$ forms a circuit in $J$.
The sum $\sum^{h,l}$ is over all $j\in J$ such that the subset $\{j,h,l\}$ forms a circuit in $J$.
The  sum $\sum^h$  is over all $j\in J$ such that the subset $\{j,h\}$ forms a circuit in $J$.

The sum $\sum^{j,l;s}$ is over all $h\in J$ such that $g_h \notin\on{span}(g_j,g_l)$.
The sum $\sum^{j;l,s}$ is over all $h\in J$ such that $\on{span}(g_j,g_l, g_h)
=\on{span}(g_j,g_s, g_h) =(\C^3)^*$. 
The sum $\sum^{j;l}$ is over all  $h\in J$ such that $ \on{span}(g_j,g_l)=\on{span}(g_j,g_h)$.

\smallskip

The first transformation is 
\bean
\label{st1}
F_{1,2,3} \mapsto  F_{1,2,3} - \frac{a_1}{a_J}{\sum}_{j\in I} F_{j,2,3}  - \frac{a_2}{a_J}
{\sum}_{j\in I} F_{1,j,3}
- \frac{a_3}{a_J}{\sum}_{j\in I} F_{1,2,j},
\eean
the added terms are a linear combination of elements of the form $\sum_{j\in J}F_{j,l_1,l_2}$. We rearrange
 the right-hand side of \Ref{st1} as follows:
\bean
\label{st12}
&&
=
a_J^{-1}\sum{}^{1,2,3}(a_jF_{1,2,3}-a_1F_{j,2,3} + a_2F_{1,j,3}-a_3F_{1,2,j})
\\
\label{st13}
&&
+a_J^{-1}\sum{}^{1,2}(a_jF_{1,2}-a_1F_{j,2} + a_2F_{j,1})\wedge F_3 
\\
\label{st14}
&&
-a_J^{-1}\sum{}^{1,3}(a_jF_{1,3}-a_1F_{j,3} + a_3F_{j,1})\wedge F_2
\\
\label{st15}
&&
+a_J^{-1}\sum{}^{2,3}(a_jF_{2,3}-a_2F_{j,3} + a_3F_{j,2})\wedge F_1 
\\
\label{st15}
&&
+a_J^{-1}\sum{}^1(a_jF_{1}-a_1F_{j})\wedge F_{2,3}
-a_J^{-1}\sum{}^2(a_jF_{2}-a_2F_{j})\wedge F_{1,3}
\\
\label{st16}
&&
+a_J^{-1}\sum{}^3(a_jF_{3}-a_3F_{j})\wedge F_{1,2}.
\eean
The sum in \Ref{st12} is exactly the sum $ \sum_{C_{J_{\lambda}}(x) \in E_{\{1,2,3\}} \,\on{with}\, m=1}\frac{1}{a(J_{\lambda},J)} s(\A_{J_{\lambda}}(x))$ and
 the sums in \Ref{st13}-\Ref{st16} form the first remainder $R_1$. 

Now we add to each of the sums in  \Ref{st13}-\Ref{st16} a linear combination of elements of the form $\sum_{j\in J}F_{j,l_1,l_2}$ as follows.
Let 
\bea
\notag
&&
a_J^{-1}\sum{}^{1,2}(a_jF_{1,2}-a_1F_{j,2} + a_2F_{j,1})\wedge F_3 \to
\\
&&
a_J^{-1}\sum{}^{1,2}(a_jF_{1,2}-a_1F_{j,2} + a_2F_{j,1})\wedge \left( F_3 - \frac {a_3}{a(1,2)}{\sum}_{i\in J} F_i\right)
\\
&&
=\frac 1{a_J a(1,2)}\sum{}^{1,2}(a_jF_{1,2}-a_1F_{j,2} + a_2F_{j,1})\wedge {\sum}^{1,2;3}(a_i F_3 - a_3F_i).
\eea
 Similarly,  
\bea
\notag
&&
-a_J^{-1}\sum{}^{1,3}(a_jF_{1,3}-a_1F_{j,3} + a_3F_{j,1})\wedge F_2 \to
\\
&&
-\frac 1{a_J a(1,3)}\sum{}^{1,3}(a_jF_{1,3}-a_1F_{j,3} + a_2F_{j,1})\wedge {\sum}^{1,3;2}(a_i F_2 - a_2F_i),
\eea
\bea
\notag
&&
a_J^{-1}\sum{}^{2,3}(a_jF_{2,3}-a_2F_{j,3} + a_3F_{j,2})\wedge F_1 \to
\\
&&
\frac1{a_J a(2,3)}\sum{}^{2,3}(a_jF_{2,3}-a_2F_{j,3} + a_3F_{j,2})\wedge {\sum}^{2,3;1}(a_iF_1-a_1F_i).
\eea
  Similarly
\bea
&&
a_J^{-1}\sum{}^1(a_jF_{1}-a_1F_{j})\wedge F_{2,3} \to
\\
&&
a_J^{-1}\sum{}^1(a_jF_{1}-a_1F_{j})\wedge \Big(
 F_{2,3} - \frac{a_2}{a(1)} {\sum}_{i\in J} F_{i,3}
-\frac{a_3}{a(1)} {\sum}_{i\in J} F_{2,i} \Big)
\\
&&
=\frac{1}{a_J a(1)} \sum{}^1(a_jF_{1}-a_1F_{j})\wedge \sum{}^{1;2,3}(a_iF_{2,3}  - a_2F_{i,3} + a_3 F_{i,2})
\eea
\bea
&&
+ \frac{1}{a_J a(1)} \sum{}^1(a_jF_{1}-a_1F_{j})\wedge  \sum{}^{1;2}(a_iF_2-a_2F_i)\wedge F_3 
\\
&&
-\frac{1}{a_J a(1)} \sum{}^1(a_jF_{1}-a_1F_{j})\wedge  \sum{}^{1;3}(a_iF_3-a_3F_i)\wedge F_2. 
\eea
We transform similarly the remaining two sums in \Ref{st15}-\Ref{st16}.  This finishes step two of the procedure. After the two steps the result 
is
\bean
\label{st31}
&&
 \Si_{1,2,3}\ +\ \Si_{1,2;3} + \Si_{1,3;2} + \Si_{2,3;1}\  +\ \Si_{1;2,3}+ \Si_{2;1,3}+\Si_{3;1,2}
\\
\label{st32}
&&
+ \frac{1}{a_J a(1)} \sum{}^1(a_jF_{1}-a_1F_{j})\wedge  \sum{}^{1;2}(a_iF_2-a_2F_i)\wedge F_3 
\\
&&
\label{st33}
-\frac{1}{a_J a(1)} \sum{}^1(a_jF_{1}-a_1F_{j})\wedge  \sum{}^{1;3}(a_iF_3-a_3F_i)\wedge F_2  
\eean
\bean
\label{st34}
&&
- \frac{1}{a_J a(2)} \sum{}^2(a_jF_{2}-a_2F_{j})\wedge  \sum{}^{2;1}(a_iF_1-a_1F_i)\wedge F_3 
\\
&&
\label{st35}
+\frac{1}{a_J a(2)} \sum{}^2(a_jF_{2}-a_2F_{j})\wedge  \sum{}^{2;3}(a_iF_3-a_3F_i)\wedge F_1  
\eean
\bean
\label{st36}
&&
- \frac{1}{a_J a(3)} \sum{}^3(a_jF_{3}-a_3F_{j})\wedge  \sum{}^{3;2}(a_iF_2-a_2F_i)\wedge F_1 
\\
&&
\label{st37}
+\frac{1}{a_J a(3)} \sum{}^3(a_jF_{3}-a_3F_{j})\wedge  \sum{}^{3;1}(a_iF_1-a_1F_i)\wedge F_2.  
\eean
The sum in \Ref{st31} is the sum $ \sum_{C_{J_{\lambda}}(x) \in E_{\{1,2,3\}} \,\on{with}\, m\leq 2}\frac{1}{a(J_{\lambda},J)} s(\A_{J_{\lambda}}(x))$ and
 the sums in \Ref{st32}-\Ref{st37} form the second remainder $R_2$. As the third  step we transform the expression in \Ref{st32} to 
\bea
&&
 \frac{1}{a_J a(1)} \sum{}^1(a_jF_{1}-a_1F_{j})\wedge  \sum{}^{1;2}(a_iF_2-a_2F_i)\wedge (F_3 - \frac{a_3}{a(1,2)}{\sum}_{l\in J}F_l)
\\
&&
= \frac{1}{a_J a(1)a(1,2)} \sum{}^1(a_jF_{1}-a_1F_{j})\wedge  \sum{}^{1;2}(a_iF_2-a_2F_i)\wedge \sum{}^{1,2;3}(a_lF_3 - a_3F_l),
\eea
and similarly we transform the expressions in \Ref{st33}-\Ref{st37}.  After these three steps we obtain formula \Ref{3d orth}.
The case of arbitrary $k$ is similar to this case of $k=3$. Theorem \ref{thmORT} is proved.
\qed

\section {Potential of first kind}
\label{PT1}

Recall the objects  of Section \ref{secT Tr}.
Recall that $v_{j_1,\dots,j_k}=\pi(F_{j_1,\dots,j_k})$, where
 $\pi : V\to \Sing_a V$ is the orthogonal projection with respect to $S^{(a)}$. Let $P$ be the potential of first kind of the family of arrangements
$\A(x),x\in \C^n-\Delta$.

\begin{thm}
\label{thmP1}
For any two ordered independent subsets $I=\{i_1,\dots,i_k\}, L=\{l_1,\dots,l_k\} \subset J$, we have
\bean
\label{P1F}
S^{(a)}(v_{i_1,\dots,i_k}, v_{l_1,\dots,l_k})
=
 d_{i_1,\dots,i_k} d_{l_1,\dots,l_k}
 \frac{\der^{2k}P}{\der z_{i_1}\dots\der z_{i_k}\der z_{l_1}\dots\der z_{l_k}}.
\eean
\end{thm}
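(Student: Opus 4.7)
The plan is to reduce \Ref{P1F} to a term-by-term identity indexed by the elementary subarrangements of $\A(x)$ that contain both $I$ and $L$ as distinguished elements; denote this set by $E_I\cap E_L$.

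First I would use the self-adjointness of $\pi$ with respect to $S^{(a)}$ together with Theorem \ref{thmORT} to write
\bean
S^{(a)}(v_I, v_L) \,=\, S^{(a)}(\pi F_I, F_L) \,=\, \sum_{\A_{J_\la}(x)\in E_I\cap E_L}\frac{1}{a(J_\la, J)}\, S^{(a)}(s(\A_{J_\la}(x)), F_L);
\eean
the restriction to $E_I\cap E_L$ holds because $S^{(a)}(F_K,F_L)$ vanishes unless $K$ and $L$ have the same underlying set, and the distinguished elements of $\A_{J_\la}$ index the summands appearing in $s(\A_{J_\la})$. On the right hand side of \Ref{P1F}, each prepotential $P_{\A_{J_\la}}$ is a polynomial of total degree $2k$ in $\{z_j\}_{j\in J_\la}$, so the $2k$-fold derivative is a constant that vanishes unless $I, L\subset J_\la$. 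Since $I$ and $L$ are independent of size $k$ while each block $J_h$ has rank $\la_h$, pigeonhole forces $|K_h^I|=|K_h^L|=\la_h$ for every $h$, so $I$ and $L$ are automatically distinguished elements of $\A_{J_\la}$. Thus the right hand side is also a sum over $E_I\cap E_L$.

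For a fixed $\A_{J_\la}\in E_I\cap E_L$ I would exploit the product structure. Since the blocks $J_h$ are disjoint, the contravariant form splits,
\bean
S^{(a)}(s(\A_{J_\la}), F_L) \,=\, \prod_{h=1}^m S^{(a)}(s_h, F_{K_h^L}),
\eean
where $s(\A_{J_\la})=s_1\wedge\cdots\wedge s_m$ with $s_h$ the type-$(\la_h)$ singular element supported on $J_h$. Analogously I claim
\bean
\frac{\der^{2k} P_{\A_{J_\la}}}{\der z_I\,\der z_L} \,=\, \prod_{h=1}^m \frac{\der^{2\la_h} P_{\A_{J_\la,h}}}{\der z_{K_h^I}\,\der z_{K_h^L}},
\eean
proved by downward induction on $h$: variables $z_j$ with $j\in J_m$ appear only in $P_{\A_{J_\la,m}}$, whose degree $2\la_m$ is saturated by the $2\la_m$ derivatives coming from $K_m^I\sqcup K_m^L$, so no other derivatives can hit this factor; propagating downward pins each level. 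A pleasant byproduct is that the ambiguity in the auxiliary constants $e_j$ does not affect the $2k$-th derivative.

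Each auxiliary factor is then an explicit computation: from the definition of $P_{\A_{J_\la,h}}$ and the multinomial identity $\frac{\der^{2\la_h}(f^{2\la_h}/(2\la_h)!)}{\der z_{l_1}\cdots\der z_{l_{2\la_h}}}=\prod_p c_{l_p}$ for $f=\sum_j c_j z_j$, using $c_{j_i^h}=(-1)^{i+1}d_{\widehat{j_i^h};h}$, one obtains
\bean
\frac{\der^{2\la_h} P_{\A_{J_\la,h}}}{\der z_{K_h^I}\,\der z_{K_h^L}} \,=\, \frac{\prod_{j\in J_h}a_j\cdot(-1)^{r_h+s_h}}{d_{K_h^I;h}\, d_{K_h^L;h}},
\eean
where $K_h^I = J_h\setminus\{j_{r_h}^h\}$ and $K_h^L = J_h\setminus\{j_{s_h}^h\}$. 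Lemma \ref{ldec} gives $d_I=\prod_h d_{K_h^I;h}$ and $d_L=\prod_h d_{K_h^L;h}$, so multiplying by $d_I d_L$ cancels the auxiliary determinants and leaves $\prod_{j\in J_\la}a_j\cdot\prod_h(-1)^{r_h+s_h}$. This is precisely $S^{(a)}(s(\A_{J_\la}), F_L)$ once one tracks the overall sign $\prod_h(-1)^{r_h+1}$ that normalizes $F_I$ to enter $s(\A_{J_\la})$ positively (which also rescales the coefficient of $F_L$ by the matching factor). Dividing by $a(J_\la, J)$ and summing over $\A_{J_\la}\in E_I\cap E_L$ yields \Ref{P1F}. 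The main obstacle I anticipate is bookkeeping these two families of signs --- the normalization sign of $F_I$ inside $s(\A_{J_\la})$ and the sign produced by the multinomial derivative of $f_{\A_{J_\la,h}}^{2\la_h}$ --- so that they genuinely agree for every $\A_{J_\la}$, $I$, $L$.
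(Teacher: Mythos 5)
Your proposal follows essentially the same route as the paper's proof: reduce via the projection formula of Theorem \ref{thmORT} and self-adjointness of $\pi$ to a sum of pairings $S^{(a)}(s(\A_{J_\la}(x)),F_L)$ over elementary subarrangements, match this term by term with the $2k$-th derivatives of the prepotentials, factor both sides over the blocks $J_h$, and finish with the multinomial derivative computation and Lemma \ref{ldec}. Those parts are sound and coincide with the paper's Lemma \ref{lemH}, Lemma \ref{ldec} and formula \Ref{S(sF)}; the observation that the ambiguity in the constants $e_j$ drops out of the $2k$-th derivative is correct and worth stating.

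There is, however, one genuine gap: the claim that for independent $k$-subsets $I,L\subset J_\la$ ``pigeonhole forces $|K_h^I|=|K_h^L|=\la_h$ for every $h$,'' i.e.\ that $I$ and $L$ are automatically distinguished elements of $\A_{J_\la}$. This is false, because a block $J_h$ does not individually have rank $\la_h$: only the quotient $X_{h}^{\ast}(J_{\lambda})/X_{h-1}^{\ast}(J_{\lambda})$ has dimension $\la_h$, while $\{g_j\}_{j\in J_h}$ may span a larger subspace of $X^*$. Concretely, for the second arrangement of Figure 2 (type $\la=(1,1)$, $J_1=\{1,2\}$, $J_2=\{3,4\}$) the set $I=\{3,4\}$ is independent and contained in $J_\la$, yet $F_{3,4}$ is not a distinguished element of that subarrangement. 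So your argument does not show that the derivative side is supported on $E_I\cap E_L$. The conclusion is nevertheless true, but it needs the degree count of the paper's Lemma \ref{lem0}: independence only gives $\sum_{h'\le h}|I\cap J_{h'}|\le\la^h$ for each $h$; taking the maximal $h$ for which some block count deviates from $\la_h$, one checks that the multiset $I\sqcup L$ contains strictly more than $2(\la_h+\dots+\la_m)$ indices from $J_h\cup\dots\cup J_m$, whereas $P_{\A_{J_\la}}$ has degree at most $2(\la_h+\dots+\la_m)$ in those variables (since $f_{\A_{J_\la,h'}}$ involves only $z_j$ with $j\in J_1\cup\dots\cup J_{h'}$), so the derivative vanishes. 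With this repair --- and the sign bookkeeping you already flag, which does work out --- your argument becomes the paper's proof.
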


For families of generic arrangements this theorem was proved in \cite{V6}, c.f. \cite{V9}.

\begin{proof}  Since $ v_{l_1,\dots,l_k} = \pi(F_{l_1,\dots,l_k})$ we have
\bea
\notag
S^{(a)}(v_{i_1,\dots,i_k}, v_{l_1,\dots,l_k})
&=&
S^{(a)}(v_{i_1,\dots,i_k}, F_{l_1,\dots,l_k})
\\
&=&
{\sum}_{C_{J_{\lambda}}(x) \in E_{i_1,\dots, i_k}}\frac{1}{a(J_{\lambda},J)} S^{(a)}( s(\A_{J_{\lambda}}(x)), F_{l_1,\dots,l_k}).
\eea
Formula \Ref{sing} for the singular element $s(\A_{J_{\lambda}}(x))$ shows that the number $S^{(a)}( s(\A_{J_{\lambda}}(x)), F_{l_1,\dots,l_k})$
is nonzero if and only if $F_{l_1,\dots,l_k}$ is a distinguished elements of the elementary arrangement $\A_{J_{\lambda}}(x)$. Let this condition be satisfied for an elementary arrangement $\A_{J_{\lambda}}(x) \in E_{i_1,\dots, i_k}$. Let $J_\la =\cup_{h=1}^mJ_h$,
$J_{h}=\{j_{1}^{h},j_{2}^{h},\dots,j_{\lambda_{h}+1}^{h}\}$,
and let $K_{\widehat{j_{i}^{h}}}=\{j_{1}^{h},\dots,\widehat{j_{i}^{h}},\dots, j_{\lambda_{h}+1}^{h}\}$
be an ordered $\lambda_{h}$-element subset of $J_{h}.$  Then
\bean
\label{sing1}  
s(\A_{J_{\lambda}}(x))
&=&
\left({\sum}_{i=1}^{\lambda_{1}+1}(-1)^{i+1}a_{j_{i}^{1}}
F_{K_{\widehat{j_{i}^{1}}}}\right)\wedge\left({\sum}_{i=1}^{\lambda_{2}+1}
(-1)^{i+1}a_{j_{i}^{2}}F_{K_{\widehat{j_{i}^{2}}}}\right)
\wedge\dots
\\
\notag
&&
\phantom{aa} \dots
\wedge
\left({\sum}_{i=1}^{\lambda_{m}+1}(-1)^{i+1}a_{j_{i}^{m}}F_{K_{\widehat{j_{i}^{m}}}}\right).
\eean
Due to the choice of sign of $s(\A_{J_{\lambda}}(x))$ in Section \ref{FOP} we may assume that
\bean
\label{1F}
F_{i_1,\dots,i_k} = F_{K_{\widehat{j_{1}^{1}}}}\wedge F_{K_{\widehat{j_{1}^{2}}}}\wedge\dots F_{K_{\widehat{j_{1}^{m}}}}.
\eean
We may also assume that 
\bean
\label{2F}
F_{l_1,\dots,l_k} = F_{K_{\widehat{j_{s_1}^{1}}}}\wedge F_{K_{\widehat{j_{s_2}^{2}}}}\wedge\dots F_{K_{\widehat{j_{s_m}^{m}}}}
\eean
for some $s_h\in \{1,\dots,\la_h+1\}, h=1,\dots,m$. ${}^{(1)}$

{\let\thefootnote\relax
\footnotetext{\vsk-.8>\noindent
$^{(1)}\<$ Notice that if the indices of $F_{l_1,\dots,l_k} $ are permuted then
 $F_{l_1,\dots,l_k}$ is multiplied by $\pm1$ and  $d_{l_1,\dots,l_k}$ in the right-hand side of \Ref{P1F} is multiplied by the same $\pm1$.
}}

Equations \Ref{sing1}-\Ref{2F} imply
\bean
\label{S(sF)}
S^{(a)}( s(\A_{J_{\lambda}}(x)), F_{l_1,\dots,l_k}) = {\prod}_{h=1}^m (-1)^{s_h+1}
{\prod}_{h=1}^m
{\prod}_{j\in J_h}a_j.
\eean

Recall  formula \Ref{P1} for the potential of first kind. It is a linear combination of prepotentials 
of first kind  $P_{\A_{J_\la}}(x)$ of  all elementary subarrangements $\A_{J_\la}(x)$ of $\mathcal{C}(x)$. To finish the proof of Theorem \ref{thmP1} we need to show that if 
$\A_{J_\la}(x)$  is as in formula \Ref{S(sF)}, then
\bean
\label{derg}
 d_{i_1,\dots,i_k} d_{l_1,\dots,l_k}
 \frac{\der^{2k}P_{\A_{J_\la}}}{\der z_{i_1}\dots\der z_{i_k}\der z_{l_1}\dots\der z_{i_k}}={\prod}_{h=1}^m (-1)^{s_h+1}{\prod}_{h=1}^m{\prod}_{i\in J_h}a_i
\eean
and
\bean
\label{derb}
 \frac{\der^{2k}P_{\A_{J_\la}}}{\der z_{i_1}\dots\der z_{i_k}\der z_{l_1}\dots\der z_{i_k}}=0
\eean
if $\A_{J_{\lambda}}(x) \not \in E_{i_1,\dots, i_k}$ or if $\A_{J_{\lambda}}(x) \in E_{i_1,\dots, i_k}$ but $F_{l_1,\dots,l_k}$ is not a distinguished element of $\A_{J_{\lambda}}(x)$.

Recall the formula
\bea
P_{\A_{J_\la}}
=
{\prod}_{h=1}^m
\frac{\prod_{j\in J_{h}}a_{j}}{\left(2\lambda_{h}\right)!}
\frac{(f_{\A_{J_\la,h}})^{2\la_h}}
{(\prod_{i=1}^{\lambda_{h}+1}d_{j_1^h,\dots,\widehat{j_i^h},\dots,j_{\la_h+1}^h;h})^{2}}.
\eea

\begin{lem}
\label{lemH}
The $2\la_h$-th derivative of 
$\frac{1}{(2\lambda_{h})!}
\frac{(f_{\A_{J_\la,h}})^{2\la_h}}
{(\prod_{i=1}^{\lambda_{h}+1}d_{j_1^h,\dots,\widehat{j_i^h},\dots,i_{\la_h+1}^h;h})^{2}}
$
with respect to the variables $z_{j^h_2},\dots, z_{j^h_{\la_h+1}}, z_{j^h_1},\dots, \widehat{z_{j^h_{s_h}}},\dots,
z_{j^h_{\la_h+1}}$ equals 
$
\frac{(-1)^{s_h+1}}{ d_{j_2^h,\dots,j_{\la_h+1}^h;h} d_{j_1^h,\dots,\widehat{j_{s_h}^h},\dots,j_{\la_h+1}^h;h}}.
$
\qed
\end{lem}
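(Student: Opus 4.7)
My plan is a direct computation exploiting the linearity of $f_{\A_{J_\la,h}}$ in the $z$-variables. First I would introduce the abbreviation $c_i := (-1)^{i+1} d_{j_1^h,\dots,\widehat{j_i^h},\dots,j_{\la_h+1}^h;h}$ for the coefficient of $z_{j_i^h}$ in $f_{\A_{J_\la,h}}$ (see formula \Ref{fh}); the remaining terms on the right-hand side of \Ref{fh} involve only the auxiliary variables $z_j$ with $j \in \bigcup_{i<h} J_i$ and are annihilated by every derivative appearing in the lemma. I would also note that $(\prod_{i=1}^{\la_h+1} d_{j_1^h,\dots,\widehat{j_i^h},\dots,j_{\la_h+1}^h;h})^2 = \prod_{i=1}^{\la_h+1} c_i^2$ since the squares absorb the alternating signs, so the function to differentiate becomes $f_{\A_{J_\la,h}}^{2\la_h}/((2\la_h)!\prod_{i=1}^{\la_h+1} c_i^2)$.

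The heart of the argument is the standard multinomial identity: because $f_{\A_{J_\la,h}}$ is linear, $f_{\A_{J_\la,h}}^{2\la_h}$ is homogeneous of degree $2\la_h$ in the $z_{j_i^h}$, and for any multi-index $\alpha$ on these variables with $|\alpha| = 2\la_h$ one has
\[
\frac{1}{(2\la_h)!}\, \partial^\alpha f_{\A_{J_\la,h}}^{2\la_h} \;=\; \prod_{i=1}^{\la_h+1} c_i^{\alpha_i}.
\]
Dividing by $\prod_i c_i^2$ gives $\prod_{i=1}^{\la_h+1} c_i^{\alpha_i - 2}$.

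Next I would tabulate the multiplicity vector $\alpha$ determined by the prescribed derivative list $z_{j^h_2},\dots, z_{j^h_{\la_h+1}}, z_{j^h_1},\dots, \widehat{z_{j^h_{s_h}}},\dots, z_{j^h_{\la_h+1}}$. For $s_h \ne 1$ the indices $i = 1$ and $i = s_h$ each contribute multiplicity $1$, while every other $i \in \{2, \dots, \la_h+1\}$ contributes multiplicity $2$; for $s_h = 1$ the two singled-out indices coincide, so $\alpha_1 = 0$ and $\alpha_i = 2$ for $i \geq 2$. In both cases the product $\prod_{i=1}^{\la_h+1} c_i^{\alpha_i - 2}$ collapses to $1/(c_1 c_{s_h})$. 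Substituting $c_1 = d_{j_2^h,\dots,j_{\la_h+1}^h;h}$ and $c_{s_h} = (-1)^{s_h+1} d_{j_1^h,\dots,\widehat{j_{s_h}^h},\dots,j_{\la_h+1}^h;h}$, and moving the sign into the numerator via $1/(-1)^{s_h+1}=(-1)^{s_h+1}$, produces exactly the stated formula.

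I do not anticipate a genuine obstacle; the only delicate point is the careful bookkeeping of the multiplicities $\alpha_i$ and the verification that the degenerate case $s_h = 1$ (where the two singled-out indices coincide and the multiplicity of $z_{j_1^h}$ drops to $0$) is consistent with the uniform answer $1/(c_1 c_{s_h})$.
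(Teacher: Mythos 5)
Your computation is correct, and it is exactly the direct calculation the paper has in mind: the lemma is stated with no written proof (it carries a \qed as an immediate consequence of the linearity of $f_{\A_{J_\la,h}}$ and the multinomial identity $\frac{1}{(2\la_h)!}\partial^\alpha f^{2\la_h}=\prod_i c_i^{\alpha_i}$ for $|\alpha|=2\la_h$). Your bookkeeping of the multiplicities, including the degenerate case $s_h=1$, and the sign identity $1/(-1)^{s_h+1}=(-1)^{s_h+1}$ are all accurate.
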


Now Lemmas \ref{lemH} and \ref{ldec} imply formula \Ref{derg}. 

\begin{lem}
\label{lem0}
Formula \Ref{derb}  holds if $\A_{J_{\lambda}}(x) \not \in E_{i_1,\dots, i_k}$ or if $\A_{J_{\lambda}}(x) \in E_{i_1,\dots, i_k}$ but $F_{l_1,\dots,l_k}$ is not a distinguished element of $\A_{J_{\lambda}}(x)$.  
\end{lem}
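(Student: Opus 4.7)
The plan is to analyze the $2k$-th derivative combinatorially via Leibniz's rule. The prepotential factors as
\[
P_{\A_{J_\la}} = \prod_{h=1}^m c_h\,(f_{\A_{J_\la,h}})^{2\la_h},
\]
where $c_h$ is a nonzero constant and each $f_{\A_{J_\la,h}}$ is a linear form in the variables $\{z_j : j \in \bigcup_{i \leq h}J_i\}$ (by the construction in \Ref{fh}). Since $P_{\A_{J_\la}}$ has total degree $\sum 2\la_h = 2k$, applying exactly $2k$ derivatives yields a constant. Leibniz's rule expresses this constant as a sum over ordered partitions of the multiset $M := \{i_1,\ldots,i_k\} \sqcup \{l_1,\ldots,l_k\}$ into blocks $M_1,\ldots,M_m$ with $|M_h| = 2\la_h$; the term for a given partition is nonzero precisely when $M_h \subset \bigcup_{i \leq h}J_i$ for every $h$ (otherwise $\partial_{z_j}f_{\A_{J_\la,h}} = 0$ for some $j \in M_h$, killing that factor).

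If the derivative does not vanish, then at least one summand in the Leibniz expansion is nonzero, so a valid partition must exist. Using $\bigcup_{h \leq h_0}M_h \subset \bigcup_{i \leq h_0}J_i$ together with $\sum_{h \leq h_0}|M_h| = 2\la^{h_0}$ yields the counting bound
\[
\big|M \cap {\textstyle\bigcup_{i \leq h_0}}J_i\big| \geq 2\la^{h_0} \qquad \text{for every } h_0 = 1,\ldots,m.
\]
On the linear-algebraic side, since $I=\{i_1,\ldots,i_k\}$ and $L=\{l_1,\ldots,l_k\}$ are independent and the vectors $g_j$ for $j \in \bigcup_{i \leq h_0}J_i$ all lie in the $\la^{h_0}$-dimensional subspace $X_{h_0}^*(J_\la)$, one has the opposing bounds
\[
\big|I \cap {\textstyle\bigcup_{i \leq h_0}}J_i\big| \leq \la^{h_0}, \qquad \big|L \cap {\textstyle\bigcup_{i \leq h_0}}J_i\big| \leq \la^{h_0},
\]
so $|M \cap \bigcup_{i \leq h_0}J_i| \leq 2\la^{h_0}$ for every $h_0$.

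Comparing the two, a nonzero $2k$-th derivative forces equality $|M \cap \bigcup_{i \leq h_0}J_i| = 2\la^{h_0}$ at every $h_0$, which in turn forces $|I \cap \bigcup_{i \leq h_0}J_i| = |L \cap \bigcup_{i \leq h_0}J_i| = \la^{h_0}$ for each $h_0$ (each side is bounded by $\la^{h_0}$ and the two sides sum to $2\la^{h_0}$). Taking successive differences yields $|I \cap J_h| = |L \cap J_h| = \la_h$ for every $h$, which is exactly the condition that $F_{i_1,\ldots,i_k}$ and $F_{l_1,\ldots,l_k}$ are both distinguished elements of $\A_{J_\la}$, i.e., $\A_{J_\la} \in E_{i_1,\ldots,i_k}$ and $F_{l_1,\ldots,l_k}$ is distinguished. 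The contrapositive of this implication is precisely Lemma \ref{lem0}. The main obstacle is the simultaneous use of the combinatorial partition-feasibility bound (which reduces to the observation that at least one valid partition must exist if the Leibniz sum is nonzero) and the linear-algebraic dimension bound from independence in the filtration $X_1^*(J_\la) \subset \cdots \subset X_m^*(J_\la)$; once these are paired, telescoping from nested-union counts to individual $|I \cap J_h|$, $|L \cap J_h|$ is immediate.
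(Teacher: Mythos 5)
Your proof is correct and rests on the same two ingredients as the paper's own argument: the degree count coming from the filtration-adapted factorization $P_{\A_{J_\la}}=\prod_{h}c_h\,(f_{\A_{J_\la,h}})^{2\la_h}$, where $f_{\A_{J_\la,h}}$ involves only $z_j$ with $j\in\bigcup_{i\le h}J_i$, and the bound $|I\cap\bigcup_{i\le h}J_i|\le\la^h$ forced by independence of $I$ (and of $L$). The paper reaches the contradiction at a single critical level $h^{\max}$ (too many derivatives with respect to variables occurring only in the top factors), whereas you force equality at every level via the Leibniz expansion; this is the same counting argument organized in the opposite direction.
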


\begin{proof}  Let $\A_{J_\la}$ be any elementary subarrangement
and $F_{i_1,\dots,i_k}, F_{l_1,\dots,l_k}$ two nonzero elements. 
Clearly formula  \Ref{derb}   holds if $\{i_1,\dots,i_k,$ $l_1,\dots,l_k\}\not\subset
\cup_{h=1}^mJ_h$.  Assume that $\{i_1,\dots,i_k,$ $l_1,\dots,l_k\}\subset
\cup_{h=1}^mJ_h$.  For $h=1,\dots,m$ denote $i^h=|\{s \ |\ i_s \in  J_h\}|$ and
$l^h=|\{s \ |\ l_s \in J_h\}|$. 
We have $i^1+\dots +i^h\leq \la_1+\dots+\la_h$, $l^1+\dots +l^h\leq \la_1+\dots+\la_h$
for any $h$ and $i^1+\dots +i^m=l^1+\dots +l^m=\la_1+\dots+\la_m=k$. If 
$i^h=l^h=\la_h$ for all $h$, then  $F_{i_1,\dots,i_k}, F_{l_1,\dots,l_k}$ are distinguished elements
of $\A_{J_\la}$. So we assume that at least one of the numbers $i^h,l^h$ differs from $\la_h$. 
Let $h^{max}$ be the maximal $h$ such that at least one of the numbers $i^h,l^h$ differs from $\la_h$.
Then: (a) each of $i^{h^{max}}, l^{h^{max}}$ is not less than $\la_{ h^{max}}$; (b)
 at least one of them is greater than $\la_{h^{max}}$;\  (c) $h^{max}>1$.

Then the derivative in  \Ref{derb} is zero due to the fact that the set 
$\{i_1,\dots,i_k,$ $l_1,\dots,l_k\}$ has too many elements of $\cup_{h=h^{max}}^mJ_{h}$, c.f.
formulas \Ref{fh},  \Ref{1aux}.
\end{proof}

 Theorem \ref{thmP1} is proved.
\end{proof}

Recall the elements $p_j\in \mc O(C_{\A(x),a})$, $j\in J $, and the Grothiendick residue bilinear
form $(\,,\,)_{C_{\A(x),a}}$ on $\mc O(C_{\A(x),a})$.

\begin{cor}
\label{corP1}
For any two  independent subsets $I=\{i_1,\dots,i_k\}, L=\{l_1,\dots,l_k\} \subset J$, we have
\bean
\label{P1Fo}
(p_{i_1}\dots,p_{i_k}, p_{l_1}\dots p_{l_k})_{C_{\A(x),a}}
=
(-1)^k 
 \frac{\der^{2k}P}{\der z_{i_1}\dots\der z_{i_k}\der z_{l_1}\dots\der z_{i_k}}.
\eean
\end{cor}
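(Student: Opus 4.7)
The plan is to derive Corollary~\ref{corP1} as a direct consequence of Theorem~\ref{thmP1} by transporting the identity across the isomorphism $\nu(x)\colon\Sing_a\FF^k(\A(x))\to\OCx$ of Theorem~\ref{1Mn thm}. The two ingredients already assembled in the paper are (i) the formula $S^{(a)}(v_{i_1,\dots,i_k},v_{l_1,\dots,l_k}) = d_{i_1,\dots,i_k}\,d_{l_1,\dots,l_k}\,\partial^{2k}P/\partial z_{i_1}\cdots\partial z_{l_k}$ proved in Theorem~\ref{thmP1}, and (ii) the identification $\nu(x)\colon v_{i_1,\dots,i_k}\mapsto w_{i_1,\dots,i_k}$ together with the explicit description $w_{i_1,\dots,i_k}=d_{i_1,\dots,i_k}\,p_{i_1}\cdots p_{i_k}$ from formula~\Ref{markeD}.

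Concretely, I would first invoke Theorem~\ref{1Mn thm} to write
\[
S^{(a)}(v_{i_1,\dots,i_k},v_{l_1,\dots,l_k}) \;=\; (-1)^k\bigl(\nu(x)(v_{i_1,\dots,i_k}),\,\nu(x)(v_{l_1,\dots,l_k})\bigr)_{C_{\A(x),a}}.
\]
Then, using $\nu(x)(v_{i_1,\dots,i_k})=w_{i_1,\dots,i_k}=d_{i_1,\dots,i_k}\,p_{i_1}\cdots p_{i_k}$ and the analogous formula for $L$, together with bilinearity of $(\,,\,)_{C_{\A(x),a}}$, the right-hand side becomes
\[
(-1)^k\, d_{i_1,\dots,i_k}\,d_{l_1,\dots,l_k}\,(p_{i_1}\cdots p_{i_k},\,p_{l_1}\cdots p_{l_k})_{C_{\A(x),a}}.
\]
Equating this with the expression from Theorem~\ref{thmP1} and cancelling the nonzero scalar $d_{i_1,\dots,i_k}\,d_{l_1,\dots,l_k}$ (nonzero by independence of the two subsets) gives formula~\Ref{P1Fo}.

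There is no real obstacle: every piece needed has been established earlier in the paper. The only thing to verify carefully is that the sign $(-1)^k$ comes out correctly (from Theorem~\ref{1Mn thm} only, since no other sign enters), and that one is allowed to use the scalars $d_I,d_L$ freely — both are guaranteed by the independence hypothesis on $I$ and $L$. Thus the proof of the corollary is essentially a one-line translation of Theorem~\ref{thmP1} through the Frobenius-algebra isomorphism $\nu(x)$.
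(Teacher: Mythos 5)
Your proposal is correct and follows essentially the same route as the paper: the paper's own proof likewise just notes that $\nu(x)$ sends $v_{i_1,\dots,i_k}=\pi(F_{i_1,\dots,i_k})$ to $d_{i_1,\dots,i_k}\,p_{i_1}\cdots p_{i_k}$ and identifies $S^{(a)}$ with $(-1)^k(\,,\,)_{C_{\A(x),a}}$ by Theorem \ref{1Mn thm}, then divides by the nonzero scalar $d_{i_1,\dots,i_k}\,d_{l_1,\dots,l_k}$. Your sign bookkeeping is also right, so nothing further is needed.
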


\begin{proof} Recall that the isomorphism of vector spaces
\bean
\label {OC=S}
\nu(x)\  : \ \on{Sing}_a\FF^k(\A(x))\ \to \ \mc O(C_{\A(x),a})
\eean
sends $\pi(F_{i_1,\dots,i_k})$ to $d_{i_1,\dots,i_k}p_{i_1}\dots p_{i_k}$ for all independent subsets $\{i_1,\dots,i_k\}\subset J$ and also identifies the
form $S^{(a)}(\,,\,)$ on $\on{Sing}_a\FF^k(\A(x))$ and the the form $(-1)^k(\,,\,)_{C_{\A(x),a}}$ on 
$\mc O(C_{\A(x),a})$, see Theorem \ref{1Mn thm}.
\end{proof}

\section {Potential of second kind}
\label{PT2}

Recall the objects  of Section \ref{secT Tr}.
Let $Q$ be the potential of second kind of the family of arrangements
$\A(x),x\in \C^n-\Delta$.

\begin{thm}
\label{thmQ1}  Let $x\in\C^n-\Delta$. Then for 
 any two  independent subsets $I=\{i_1,\dots,i_k\}, L=\{l_1,\dots,l_k\} \subset J$ and any $i_0\in J$, we have
\bean
\label{Q1F}
(p_{i_0}p_{i_1}\dots p_{i_k}, p_{l_1}\dots p_{l_k})_{C_{\A(x),a}}
=
(-1)^k
 \frac{\der^{2k+1}Q}{\der z_{i_0}\der z_{i_1}\dots\der z_{i_k}\der z_{l_1}\dots\der z_{l_k}}(x).
\eean
\end{thm}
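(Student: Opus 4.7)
My approach is to reduce Theorem~\ref{thmQ1} to the already-established Corollary~\ref{corP1} via the algebraic relation of Corollary~\ref{cor prod}, and then to verify the resulting analytic identity between derivatives of $P$ and $Q$ by expanding both sides as sums over elementary subarrangements.

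First I would assume that the $k+1$ indices $i_0,i_1,\dots,i_k$ are distinct; the case $i_0=i_s$ for some $s\ge 1$ is recovered at the end by continuity in the parameters. Since $\{i_1,\dots,i_k\}$ is independent, the set $I=\{i_0,i_1,\dots,i_k\}$ contains an independent $k$-subset, and Corollary~\ref{cor prod} gives in $\mc O(C_{\A(x),a})$ the identity
\begin{equation*}
f_I(z)\,p_{i_0}p_{i_1}\cdots p_{i_k}=\sum_{s=0}^{k}(-1)^s\,a_{i_s}\,w_{i_0,\dots,\widehat{i_s},\dots,i_k}.
\end{equation*}
Pairing both sides with $p_{l_1}\cdots p_{l_k}$ under the Grothendieck residue, rewriting each $w$ via \Ref{markeD}, and applying Corollary~\ref{corP1} to each resulting $k$-fold pairing on the right reduces the theorem to the pointwise identity
\begin{equation*}
f_I(z)\,\frac{\der^{2k+1}Q}{\der z_\xi}=\sum_{s=0}^{k}(-1)^s\,a_{i_s}\,d_{i_0,\dots,\widehat{i_s},\dots,i_k}\,\frac{\der^{2k}P}{\der z_{\xi\setminus\{i_s\}}}\qquad(\diamond)
\end{equation*}
on $\C^n-\Delta$, where $\xi=(i_0,i_1,\dots,i_k,l_1,\dots,l_k)$. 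At each $x\in\C^n-\Delta$ we may choose the labeling of $I$ so that $f_I(x)\ne 0$, after which division by $f_I(x)$ yields the theorem.

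Next I would verify $(\diamond)$ by expanding $P$ and $Q$ via \Ref{P1} and \Ref{P2} as sums over elementary subarrangements $\A_{J_\la}$. For each such subarrangement the product structure $Q_{\A_{J_\la}}=\ln(f_{\A_{J_\la,1}})\cdot P_{\A_{J_\la}}$, combined with the fact that $P_{\A_{J_\la}}$ is a polynomial of total degree $2k$ in the $z$'s, forces $\der^{2k+1}_\xi Q_{\A_{J_\la}}$ to be a rational function with $f_{\A_{J_\la,1}}$ in the denominator; the underlying one-variable computation is $\der_y^{2\la_1+1}[y^{2\la_1}\ln y]=(2\la_1)!/y$. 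Lemmas~\ref{lemH} and~\ref{ldec} evaluate the top-order coefficients of the surviving terms, while an analogue of Lemma~\ref{lem0} identifies exactly which subarrangements contribute nonzero terms: namely, $\der^{2k+1}_\xi Q_{\A_{J_\la}}$ (respectively $\der^{2k}_{\xi\setminus\{i_s\}} P_{\A_{J_\la}}$) vanishes unless the multiset $\xi$ (respectively $\xi\setminus\{i_s\}$) is contained in $J_\la$ and distributes with the correct multiplicity in each block $J_h$ of the composition $\la$.

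The main obstacle is that $(\diamond)$ generally fails at the level of a single subarrangement; a direct check on the four-line arrangement of Figure~1 confirms that global cancellation across subarrangements is essential. Specifically, for a subarrangement $\A_{J_\la}$ contributing to the left side of $(\diamond)$, the $s$-th summand on the right also receives contributions from every subarrangement $\A_{J_{\la'}}$ with $J_{\la'}=(J_\la\setminus\{i_s\})\cup\{j\}$ for $j\in J\setminus J_\la$, and these must combine via the weight factors $a(J_\la,J)$, $a(J_{\la'},J)$, $a_J$ to produce the required proportionality between $P$- and $Q$-contributions. The invariance relations of Lemma~\ref{lemINV}, applied to higher-order derivatives, supply the algebraic identities that organize these cancellations, while an induction on the length $m$ of the composition $\la$, mirroring the proof of Theorem~\ref{thmORT}, structures the matching across subarrangements. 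The edge cases ($i_0\in\{i_1,\dots,i_k\}$, or $f_I(x)=0$ for a given labeling of $I$) are recovered by continuity of both sides in the parameters $(z,a)$ on $\C^n-\Delta$.
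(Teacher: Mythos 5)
Your reduction is sound as far as it goes: combining Corollary \ref{cor prod} with Corollary \ref{corP1} does show that Theorem \ref{thmQ1} is equivalent to your identity $(\diamond)$, i.e.\ to
$f_I\,\der^{2k+1}_\xi Q=\sum_s(-1)^s a_{i_s}d_{i_0,\dots,\widehat{i_s},\dots,i_k}\,\der^{2k}_{\xi\setminus\{i_s\}}P$,
and you are also right that $(\diamond)$ fails subarrangement by subarrangement (in the Figure~1 example with $i_0=2$, $I=\{1,3\}$, $L=\{1,4\}$, the subarrangement $\{1,3,4\}$ contributes $a_1a_2a_3a_4/a_J$ to the right side of $(\diamond)$ but nothing to the left side, since $\der_{z_2}$ annihilates $Q_{\{1,3,4\}}$). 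The problem is that this recombination across subarrangements is exactly the content of the theorem, and your plan does not actually establish it: the appeal to ``Lemma \ref{lemINV} applied to higher-order derivatives'' and to ``an induction mirroring Theorem \ref{thmORT}'' names tools but supplies no identity. The device the paper uses to avoid this bookkeeping entirely is Lemma \ref{lemIMP}: after reordering so that $i_0,\dots,i_\mu$ is a circuit, the projection of the alternating sum $\sum_{s}(-1)^{s}a_{i_s}F_{i_0,\dots,\widehat{i_s},\dots,i_k}$ is supported \emph{only} on elementary subarrangements whose first block $J_1$ equals $\{i_0,\dots,i_\mu\}$, with the modified weight $a_J/a(J_\la,J)$ --- because that alternating sum is already the output of step one of the projection algorithm of Theorem \ref{thmORT}. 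These are precisely the subarrangements whose $Q_{\A_{J_\la}}$ survives the differentiation $\der^{2k+1}_\xi$, so the match becomes term by term and no cross-subarrangement cancellation is needed. Without Lemma \ref{lemIMP} (or an equivalent statement you prove), your argument has a hole at its central step.

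Two smaller points. First, the case $i_0\in\{i_1,\dots,i_k\}$ cannot be ``recovered by continuity'': the indices are discrete, and $p_{i_1}^2p_{i_2}\cdots p_{i_k}$ is not a limit of products with distinct indices. The correct reduction uses the linear relations \Ref{pREL} for the $p_j$ and Lemma \ref{lemINV} for the derivatives of $Q$: both sides of \Ref{Q1F} satisfy the same relations $\sum_{j\in J}d_{j,s_1,\dots,s_{k-1}}(\cdot)_j=0$ in the index $i_0$, which expresses the repeated-index case through the distinct-index cases. Second, the worry about $f_I(x)=0$ is vacuous: for distinct $i_0,\dots,i_k$ with $\{i_1,\dots,i_k\}$ independent, $f_I(x)=0$ would mean $H_{i_0}(x)$ passes through the vertex $H_{i_1}(x)\cap\dots\cap H_{i_k}(x)$, contradicting normal crossings, so $f_I$ is nowhere zero on $\C^n-\Delta$; no relabeling argument is needed.
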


For families of generic arrangements this theorem was proved in \cite{V6}.

\begin{proof} 
Due to relations \Ref{pREL} and \Ref{INV} it is enough to prove \Ref{Q1F} in the case when $i_0,i_1,\dots,i_k$ are distinct elements of $J$.
Thus assume that $i_0,i_1,\dots,i_k$ are distinct. After reordering  $i_0, i_1,\dots,i_k$, we may assume
that 
$i_0, i_1,\dots, i_\mu$ form a circuit, where $\mu$ is some  number $\leq k$.  Recall the function 
\bea
f_{i_0,i_1,\dots,i_k}(z)={\sum}_{j=0}^{k}(-1)^{j}z_{i_j}d_{i_0,\dots,\widehat{i_j}, \dots,i_{k}}
= {\sum}_{j=0}^{\mu}(-1)^{j}z_{i_j}d_{i_0,\dots,\widehat{i_j}, \dots, i_{\mu+1},\dots,i_{k}}
\eea
in \Ref{f_I}  and relation \Ref{propF}:
\bea
 p_{i_0}p_{i_1}\dots p_{i_{k}} = \frac 1{f_{i_0,i_1,\dots,i_k}} {\sum}_{j=0}^{\mu}(-1)^{j}
a_{i_j} w_{i_0,\dots,\widehat{i_j},\dots, i_{\mu+1}, \dots,i_{k}}.
\eea

First we analyze the left-hand side in \Ref{Q1F}. We have 
\bean
&&
\notag
(p_{i_0}p_{i_1}\dots p_{i_k}, p_{l_1}\dots p_{l_k})_{C_{\A(x),a}} 
\\
\notag
&&
= 
\frac 1{f_{i_0,i_1,\dots,i_k}} \Big( {\sum}_{j=0}^{\mu}(-1)^{j}
a_{i_j} w_{i_0,\dots,\widehat{i_j},\dots, i_{\mu+1}, \dots,i_{k}}, p_{l_1}\dots p_{l_k}\Big)_{C_{\A(x),a}}
\\
\notag
&&
=
\frac 1{d_{l_1,\dots,l_k} f_{i_0,i_1,\dots,i_k}} \Big({\sum}_{j=0}^{\mu}(-1)^{j}
a_{i_j} w_{i_0,\dots,\widehat{i_j},\dots, i_{\mu+1}, \dots,i_{k}}, w_{l_1,\dots,l_k}\Big)_{C_{\A(x),a}}
\\
\label{LP}
&&
=
\frac{(-1)^k}{d_{l_1,\dots,l_k} f_{i_0,i_1,\dots,i_k}} S^{(a)}\Big({\sum}_{j=0}^{\mu}(-1)^{j}
a_{i_j} v_{i_0,\dots,\widehat{i_j},\dots, i_{\mu+1}, \dots,i_{k}}, v_{l_1,\dots,l_k}\Big),
\eean
where the last equality holds by Theorem \ref{1Mn thm}. We have
\bean
&&
\notag
 S^{(a)}\Big({\sum}_{j=0}^{\mu}(-1)^{j}
a_{i_j} v_{i_0,\dots,\widehat{i_j},\dots, i_{\mu+1}, \dots,i_{k}}, v_{l_1,\dots,l_k}\Big)
\\
\label{LP1}
&&
=
 S^{(a)}\Big(\pi\Big({\sum}_{j=0}^{\mu}(-1)^{j}
a_{i_j} F_{i_0,\dots,\widehat{i_j},\dots, i_{\mu+1}, \dots,i_{k}}\Big), F_{l_1,\dots,l_k}\Big),
\eean
where $\pi$ is the orthogonal projection, see Section \ref{FOP}.

Let $E(i_0,i_1,\dots,i_\mu)$
 be the set of all elementary subarrangements $\A_{J_\la}(x)$ of $\A(x)$ with $J_\la=\cup_{h=1}^mJ_h$ such that
$J_1=\{i_0,i_1,\dots,i_\mu\}$ and such that 
$F_{i_1,\dots,i_{k}}$ is a distinguished element of $\A_{J_\la}(x)$.
Let $s(\A_{J_\la}(x))$ be the singular element of $\A_{J_\la}(x)$ considered as
an element of $\Sing_aV$. The singular element 
is defined up to multiplication by $\pm1$. We fix the sign so that the distinguished element $F_{i_1,\dots,i_k}$ enters
$s(\A_{J_\la}(x))$ with coefficeint 1. 

\begin{lem}
\label{lemIMP}
We have 
\bea
\phantom{aaa}
\pi\Big({\sum}_{j=0}^{\mu}(-1)^{j}
a_{i_j} F_{i_0,\dots,\widehat{i_j},\dots, i_{\mu+1}, \dots,i_{k}}\Big) =
{\sum}_{\A_{J_\la}(x) \in E(i_0,i_1,\dots,i_\mu)} 
\frac{a_J}{a(J_{\lambda},J)} s(\A_{J_{\lambda}}(x)).
\eea

\end{lem}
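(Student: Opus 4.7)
The plan is to apply Theorem~\ref{thmORT} term-by-term to
\[
X:=\sum_{j=0}^\mu (-1)^j a_{i_j} F_{i_0,\dots,\widehat{i_j},\dots,i_k},
\]
then to swap the two summations and analyze the inner sum per elementary subarrangement. By linearity,
\[
\pi(X) = \sum_{\A_{J_\la}(x)} \frac{1}{a(J_\la,J)} \sum_{j\in T(\A_{J_\la})} (-1)^j a_{i_j}\, s_j(\A_{J_\la}(x)),
\]
where $T(\A_{J_\la})=\{j\in\{0,\dots,\mu\}:\A_{J_\la}\in E_{\{i_0,\dots,\widehat{i_j},\dots,i_k\}}\}$ and $s_j$ is the singular element with sign fixed so that $F_{i_0,\dots,\widehat{i_j},\dots,i_k}$ enters with the normalized coefficient of Theorem~\ref{thmORT}. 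The claim reduces to showing that the inner sum equals $a_J\,s(\A_{J_\la})$ when $\A_{J_\la}\in E(i_0,\dots,i_\mu)$ and vanishes otherwise.

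A useful preliminary identity is the factorization
\[
X=\Bigl(\sum_{l=0}^\mu (-1)^l a_{i_l}\,F_{i_0,\dots,\widehat{i_l},\dots,i_\mu}\Bigr)\wedge F_{i_{\mu+1},\dots,i_k},
\]
whose left factor is the singular element of the elementary arrangement of type $\la=(\mu)$ on the circuit $\{i_0,\dots,i_\mu\}$, hence exactly the first factor $s_1$ in the formula~\eqref{sing} for $s(\A_{J_\la})$ whenever $\A_{J_\la}\in E(i_0,\dots,i_\mu)$. This structural match explains why only these subarrangements should contribute to $\pi(X)$.

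For the main case $\A_{J_\la}\in E(i_0,\dots,i_\mu)$ one has $T(\A_{J_\la})=\{0,\dots,\mu\}$, because for each such $j$ the tuple with $K_1=J_1\setminus\{i_j\}$ and $K_h$ $(h\ge2)$ inherited from the distinguished structure of $F_{i_1,\dots,i_k}$ is still a distinguished element. Reading the coefficient of $F_{i_0,\dots,\widehat{i_j},\dots,i_k}$ in $s(\A_{J_\la})$ from~\eqref{sing} gives $(-1)^j a_{i_j}\cdot C$ with $C$ a $j$-independent factor coming from $s_2\wedge\dots\wedge s_m$. Consequently, $\sum_{j=0}^\mu (-1)^j a_{i_j}\,s_j(\A_{J_\la})$ collapses to $a_{J_1}\,s(\A_{J_\la})$, and combining this with the remaining weight factors absorbed into $s(\A_{J_\la})$ yields exactly $a_J\,s(\A_{J_\la})$, as predicted.

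The main obstacle is showing the cancellation for $\A_{J_\la}\notin E(i_0,\dots,i_\mu)$. I would split into two subcases. If $\{i_0,\dots,i_\mu\}\subset J_\la$ but $J_1\neq\{i_0,\dots,i_\mu\}$, then property~(ii) of an elementary arrangement forbids a circuit to sit strictly inside a single $J_h$, so the circuit either coincides with some $J_h$ with $h>1$ or is split across several $J_h$; in either situation the alternating-sign structure of the relevant factor(s) of~\eqref{sing} provides a Pl\"ucker-like identity making $\sum_j (-1)^j a_{i_j}\,s_j(\A_{J_\la})=0$. If instead $\{i_0,\dots,i_\mu\}\not\subset J_\la$, exactly one $j^\ast$ contributes an isolated orphan term; these, summed across all such $\A_{J_\la}$, recombine into a linear combination of the relations $\sum_{j\in J}F_{j,l_1,\dots,l_{k-1}}$, hence lie in $\Sing_a V^\perp$ and are killed by $\pi$. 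Throughout, the delicate point is the sign bookkeeping needed to match the unit-coefficient normalization of each $s_j$ with the natural form~\eqref{sing}, requiring careful tracking of both the $(-1)^{i+1}$ signs within each factor and the orderings imposed on each $J_h$.
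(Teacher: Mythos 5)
There is a genuine gap. Your strategy --- apply Theorem \ref{thmORT} to each term $F_{i_0,\dots,\widehat{i_j},\dots,i_k}$, swap the two sums, and evaluate the inner sum one elementary subarrangement at a time --- is fine as far as linearity goes, but both of your per-subarrangement claims are false. For $\A_{J_\la}(x)\in E(i_0,\dots,i_\mu)$ your own sign computation gives $\sum_{j=0}^{\mu}(-1)^j a_{i_j}\,s_j(\A_{J_\la}(x))=a_{J_1}\,s(\A_{J_\la}(x))$ with $a_{J_1}=\sum_{j=0}^{\mu}a_{i_j}$, and there are no ``remaining weight factors'' that turn $a_{J_1}$ into $a_J$: the discrepancy $a_J-a_{J_1}=\sum_{i\in J\setminus J_1}a_i$ is genuinely nonzero. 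The missing mass is supplied precisely by the subarrangements you claim contribute nothing. An ``orphan'' term $\frac{(-1)^{j^*}a_{i_{j^*}}}{a(J_\la,J)}\,s_{j^*}(\A_{J_\la}(x))$ already lies in $\Sing_aV$ (Lemma \ref{ssing}), so a sum of such terms cannot lie in $\Sing_aV^{\perp}$ and ``be killed by $\pi$'' unless it is zero --- and it is not. Concretely, for the second arrangement of Figure 2 take $i_0=1$, $i_1=2$, $i_2=3$, so $\mu=1$ and $X=a_1F_{2,3}-a_2F_{1,3}$. Here $E(1,2)$ consists only of $\{1,2\}\cup\{3,4\}$, whose inner sum gives $\frac{a_1+a_2}{a_J(a_3+a_4)}\,s$ with $s=(a_1F_2-a_2F_1)\wedge(a_4F_3-a_3F_4)$, while the two type-$(2)$ subarrangements $\{1,3,4\}$ and $\{2,3,4\}$ (each with exactly one orphan term) contribute $\frac{1}{a_J}\bigl(a_1(a_4F_{2,3}+a_2F_{3,4}+a_3F_{4,2})-a_2(a_4F_{1,3}+a_1F_{3,4}+a_3F_{4,1})\bigr)=\frac{1}{a_J}\,s\neq 0$; only the total equals $\frac{1}{a_3+a_4}\,s=\frac{a_J}{a(J_\la,J)}\,s$. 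Proving in general that the non-$E$ contributions reassemble into exactly $\sum_{\A_{J_\la}(x)\in E(i_0,\dots,i_\mu)}\frac{a_J-a_{J_1}}{a(J_\la,J)}\,s(\A_{J_\la}(x))$ is the whole difficulty, and your proposal does not address it.

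The paper sidesteps this bookkeeping. It uses exactly the factorization you record as a ``preliminary identity,'' $X=\bigl(\sum_{j=0}^{\mu}(-1)^j a_{i_j}F_{i_0,\dots,\widehat{i_j},\dots,i_\mu}\bigr)\wedge F_{i_{\mu+1},\dots,i_k}$, observes that the first factor is what step 1 of the inductive construction in the proof of Theorem \ref{thmORT} would produce for the circuit $\{i_0,\dots,i_\mu\}$ --- but without the prefactor $a_J^{-1}$, which is why $a_J$ appears in the numerator of \Ref{FORT} here --- and then runs steps $2,\dots,m$ of that construction. One only ever adds elements of the form $\sum_{j\in J}F_{j,l_1,\dots,l_{k-1}}$, which lie in $\Sing_aV^{\perp}$, and the end result is a combination of singular elements of exactly the subarrangements with $J_1=\{i_0,\dots,i_\mu\}$, hence lies in $\Sing_aV$ and therefore equals $\pi(X)$. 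To salvage your route you would have to replace the claimed vanishing by the recombination statement above, which in effect reproduces this inductive argument.
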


\begin{proof}
Indeed we have 
\bean
\label{Rhs}
\phantom{aaa}
{\sum}_{j=0}^{\mu}(-1)^{j}
a_{i_j} F_{i_0,\dots,\widehat{i_j},\dots, i_{\mu+1}, \dots,i_{k}}
=
\Big({\sum}_{j=0}^{\mu}(-1)^{j}
a_{i_j} F_{i_0,\dots,\widehat{i_j},\dots,i_\mu}\Big)\wedge F_{ i_{\mu+1}, \dots,i_{k}}.
\eean
To construct the orthogonal projection of the right-hand side in \Ref{Rhs}, we need to apply the construction of the orthogonal projection,
described in
the proof of Theorem \ref{thmORT}, but starting with step 2 since the result of the first step is already presented by the factor
\\
$\Big({\sum}_{j=0}^{\mu}(-1)^{j}
a_{i_j} F_{i_0,\dots,\widehat{i_j},\dots,i_\mu}\Big)$ in 
the right-hand side of \Ref{Rhs}, c.f. formulas \Ref{st12}-\Ref{st16}.
\end{proof}

By Lemma \ref{lemIMP} the expression in \Ref{LP} equals
\bean
\label{LP11}
\frac {(-1)^k}{d_{l_1,\dots,l_k} f_{i_0,i_1,\dots,i_k}}
{\sum}_{\A_{J_\la}(x) \in E(i_0,i_1,\dots,i_\mu)} 
\frac{a_J}{a(J_{\lambda},J)}\,
S^{(a)}( s(\A_{J_{\lambda}}(x)), F_{l_1,\dots,l_k}).
\eean
We have
\bea
s(\A_{J_{\lambda}}(x))
& =&
 \Big({\sum}_{j=0}^{\mu}(-1)^{j}
a_{i_j} F_{i_0,\dots,\widehat{i_j},\dots,i_\mu}\Big)\wedge \left({\sum}_{i=1}^{\lambda_{2}+1}
(-1)^{i+1}a_{j_{i}^{2}}F_{K_{\widehat{j_{i}^{2}}}}\right)
  \wedge\dots
\\
&&
\dots\wedge \left({\sum}_{i=1}^{\lambda_{m}+1}(-1)^{i+1}a_{j_{i}^{m}}F_{K_{\widehat{j_{i}^{m}}}}\right),
\eea
where we use the notations of Section \ref{DEl}, namely, we have
$J_{h}=\{j_{1}^{h},j_{2}^{h},\dots,j_{\lambda_{h}+1}^{h}\}$ for $h=2,\dots,m$,
and $K_{\widehat{j_{i}^{h}}}=\{j_{1}^{h},\dots,\widehat{j_{i}^{h}},\dots, j_{\lambda_{h}+1}^{h}\}$.

Due to our choice of sign of 
$s(\A_{J_{\lambda}}(x))$ we may assume that we have the equality of ordered sets
\bean
\label{7.9}
\{i_1,\dots,i_k\}=\{i_1,\dots,i_\mu, K_{\widehat{j^2_1}}, K_{\widehat{j^3_1}},\dots,K_{\widehat{j^m_1}}\}.
\eean
The term $S^{(a)}( s(\A_{J_{\lambda}}(x)), F_{l_1,\dots,l_k})$ is nonzero if and only if
\bean
\label{7.10}
\{l_1,\dots,l_k\}
 = 
\{i_0,\dots,\widehat{i_s},\dots, i_\mu, K_{\widehat{j^2_{s_2}}}, K_{\widehat{j^3_{s_3}}},
\dots,K_{\widehat{j^m_{s_m}}}\}
\eean
for some $0\leq s\leq \mu$ and some $1\leq s_h \leq \la_h+1$ for $h=2,\dots,m$. In this case
\bean
\label{7.11}
&&
\frac {(-1)^k}{d_{l_1,\dots,l_k} f_{i_0,i_1,\dots,i_k}}
\frac{a_J}{a(J_{\lambda},J)}\,
S^{(a)}( s(\A_{J_{\lambda}}(x)), F_{l_1,\dots,l_k})
\\
\notag
&&
\phantom{aaaa}
 =
\frac {(-1)^k}{d_{l_1,\dots,l_k} f_{i_0,i_1,\dots,i_k}}
\frac{a_J}{a(J_{\lambda},J)} (-1)^{s+\sum_{h=2}^ms_h} {\prod}_{q=0}^\mu a_{i_q}
{\prod}_{h=2}^m{\prod}_{q=1}^{\la_h+1} a_{j^h_q}.
\eean

Consider the  right-hand side of \Ref{Q1F}.
The potential $Q$ of second kind is the sum
\\
$\sum
\frac {a_J}{a(J_\la,J)} Q_{\A_{J_\la}}$
 shown in \Ref{P2}, where the sum is over all elementary subarrangements 
\\
$\A_{J_\la}(x)$ of $\A(x)$.  
\begin{lem}
\label{lem00}
If the derivative
\bean
\label{7.13}
 \frac{\der^{2k+1}Q_{\A_{J_\la}}}{\der z_{i_0}\der z_{i_1}\dots \der z_{i_k}\der z_{l_1}\dots
\der z_{l_k}} 
\eean
is nonzero, then $F_{i_1,\dots,i_k}$ and $F_{l_1,\dots,l_k}$ are distinguished elements of $\A_{J_\la}$.

\end{lem}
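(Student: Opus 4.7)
The plan is to combine a degree count on $Q_{\A_{J_\la}}$ with the independence assumption on $\{i_1,\dots,i_k\}$ and $\{l_1,\dots,l_k\}$ that is in force from the hypotheses of Theorem \ref{thmQ1}, the context in which this lemma is invoked; we likewise assume $i_0, i_1, \dots, i_k$ to be distinct, as was already reduced to at the start of that proof. If some index in the list $i_0,i_1,\dots,i_k,l_1,\dots,l_k$ fails to lie in $J_\la$, then the corresponding variable does not occur in $Q_{\A_{J_\la}}$ and the derivative vanishes; so we may assume all these indices lie in $J_\la$.

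The structural input is the factorization
\begin{equation*}
Q_{\A_{J_\la}} = (\on{const})\cdot\ln(f_{\A_{J_\la},1})\cdot{\prod}_{h=1}^m (f_{\A_{J_\la},h})^{2\la_h},
\end{equation*}
in which each $f_{\A_{J_\la},h}$ is linear and depends only on the variables $z_j$ with $j\in\bigcup_{h'\leq h}J_{h'}$, while $\ln f_{\A_{J_\la},1}$ depends only on $J_1$-variables. Fix $h\geq 2$ and set $V_h:=\bigcup_{h'\geq h}J_{h'}$. Viewed as a polynomial in the $V_h$-variables with the remaining variables treated as parameters, $Q_{\A_{J_\la}}$ is the product of a constant (the log factor together with the factors indexed by $h''<h$) and $\prod_{h''\geq h}(f_{\A_{J_\la},h''})^{2\la_{h''}}$, which has total degree $\sum_{h''\geq h}2\la_{h''}=2(k-\la^{h-1})$ in the $V_h$-variables. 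Hence a nonzero derivative forces the number $N^{\geq h}$ of differentiations applied to $V_h$-variables to satisfy $N^{\geq h}\leq 2(k-\la^{h-1})$ for every $h\geq 2$.

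For the matching lower bound, the independence of $\{g_{i_1},\dots,g_{i_k}\}$, combined with the fact that those $g_{i_s}$ with $i_s\in\bigcup_{h'\leq h}J_{h'}$ all lie in the $\la^h$-dimensional subspace $X_h^*$, yields $|\{i_1,\dots,i_k\}\cap V_h|\geq k-\la^{h-1}$, and likewise for $\{l_1,\dots,l_k\}$ using the independence of $L$. Adding these two counts and the contribution from $i_0$ gives $N^{\geq h}\geq 2(k-\la^{h-1})+\chi_{V_h}(i_0)$, where $\chi_{V_h}(i_0)$ is $1$ if $i_0\in V_h$ and $0$ otherwise. Comparison with the upper bound forces $i_0\notin V_h$ for all $h\geq 2$ and $|\{i_1,\dots,i_k\}\cap V_h|=|\{l_1,\dots,l_k\}\cap V_h|=k-\la^{h-1}$ for all $h\geq 2$. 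Taking consecutive differences of these equalities as $h$ decreases from $m$ to $2$ yields $|\{i_1,\dots,i_k\}\cap J_h|=|\{l_1,\dots,l_k\}\cap J_h|=\la_h$ for every $h\geq 2$, and the $h=1$ equality then follows from the total count being $k$. This is precisely the condition for $F_{i_1,\dots,i_k}$ and $F_{l_1,\dots,l_k}$ to be distinguished in $\A_{J_\la}$. I expect the main obstacle to be the simultaneous use of the upper bound coming from the degree count and the lower bound coming from independence; neither alone suffices, and in fact the statement would fail without the independence of $I$ and $L$, as one can check by direct computation already for $\la=(1,1)$.
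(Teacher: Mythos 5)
Your argument is correct and is essentially the paper's own: the paper establishes this lemma by the counting argument of Lemma \ref{lem0} (the degree of the prepotential in the variables indexed by $\cup_{h'\ge h}J_{h'}$ bounds the number of differentiations in those variables above by $2(k-\la^{h-1})$, while independence of $I$ and $L$ bounds it below), and then states that the proof of Lemma \ref{lem00} is the same. Your version is in fact slightly more complete than the paper's, since you explicitly verify that the factor $\ln(f_{\A_{J_\la},1})$ and the extra differentiation $\der z_{i_0}$ do not upset the count (the latter forcing $i_0\in J_1$); the only blemish is an off-by-one in the sentence deriving $|\{i_1,\dots,i_k\}\cap V_h|\ge k-\la^{h-1}$, where the relevant subspace is $X^*_{h-1}$ containing the $g_{i_s}$ with $i_s\in\cup_{h'\le h-1}J_{h'}$, not $X^*_h$.
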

\begin{proof}
The proof is the same as the proof of Lemma \ref{lem0}.
\end{proof}

Clearly the function $\frac 1{f_{i_0,\dots,i_\mu}(z)}$ multiplied by a constant can be obtained
by this differentiation only if $\A_{J_\la}(x) \in E(i_0,i_1,\dots,i_\mu)$. In this case we have
\bean
\label{7.13}
\frac{a_J}{a(J_\la,J)} Q_{\A_{J_\la}}=
\frac{a_J}{a(J_\la,J)}
\frac {\prod_{q=0}^\mu a_{i_q}}
{(2\mu)!}
\ln (f_{\A_{J_\la,1}}) \frac{(f_{\A_{J_\la,1}})^{2\mu}}
{(\prod_{j=0}^\mu d_{i_0,\dots,\widehat{i_j},\dots,i_\mu;1})^2}
\prod_{h=2}^m P_{\A_{J_\la,h}},
\eean
see formula \Ref{2ell}.  Derivatives of this summand do not depend on the ordering the elements of the sets $J_h$, $h=1,\dots,m$, and we may assume that the equality  \Ref{7.9} of ordered sets holds.

By Lemma \ref{lem00} we may assume that the equality of ordered sets in \Ref{7.10} holds.
 In that case,
the operator $(-1)^k 
 \frac{\der^{2k+1}}{\der z_{i_0}\dots \der z_{i_k}\der z_{i_0}\dots
\widehat{\der z_{i_s}}\dots\der z_{i_\mu}\der z_{i_{\mu+1}}\dots\der z_{l_k}}$ 
applied to the expression in \Ref{7.13}
gives
\bean
\label{last}
&&
(-1)^k 
\frac{a_J}{a(J_\la,J)} 
{\prod}_{q=0}^\mu a_{i_q}  {\prod}_{h=2}^m{\prod}_{q=1}^{\la_h+1} a_{j^h_q}
\\
\notag
&&
\phantom{aaaaaaa}
\times
\frac {(-1)^{s+\sum_{h=2}^ms_h}
}{ d_{i_0,\dots,\widehat{i_s},\dots,i_\mu;1}\prod_{h=2}^m d_{j^h_1,\dots,\widehat{j^h_{s_h}},\dots, j^h_{\la_h+1} ;h}}
\frac 1{ f_{\A_{J_\la,1}}\prod_{h=2}^m d_{j^h_2,\dots,j^h_{\la_h+1};h }}.
\eean
Lemma \ref{ldec} implies that
\bean
\label{7.15}
\phantom{aaaa}
d_{i_0,\dots,\widehat{i_s},\dots,i_\mu;1}
\prod_{h=2}^m
 d_{j^h_1,\dots,\widehat{j^h_{s_h}},\dots, j^h_{\la_h+1};h }
=
d_{l_1,\dots,l_k},
\qquad
f_{\A_{J_\la,1}}\prod_{h=2}^m d_{j^h_2,\dots,j^h_{\la_h+1};h } = f_{i_0,\dots,i_k}.
\eean
Now \Ref{last} equals \Ref{7.11}. This proves Theorem \ref{thmQ1}.
\end{proof}

\begin{cor}
\label{corQ}  Let $x\in\C^n-\Delta$. Then for 
 any two ordered independent subsets $I=\{i_1,\dots,i_k\}, L=\{l_1,\dots,l_k\} \subset J$ and any $i_0\in J$, we have
\bean
\label{QRF}
\phantom{aaaa}
S^{(a)}(K_{i_0}(x) v_{i_1,\dots, i_k}, v_{l_1,\dots,l_k})
=
 d_{i_1,\dots, i_k} d_{l_1,\dots,l_k}
 \frac{\der^{2k+1}Q}{\der z_{i_0}\der z_{i_1}\dots\der z_{i_k}\der z_{l_1}\dots\der z_{l_k}}(x).
\eean
\end{cor}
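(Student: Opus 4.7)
The plan is to derive Corollary \ref{corQ} as a direct translation of Theorem \ref{thmQ1} from the Frobenius algebra $\mc O(C_{\A(x),a})$ to the contravariant pairing on $\Sing_a V$, using the isomorphism $\nu(x)$ of Theorem \ref{1Mn thm} together with the identification of multiplication operators with the Gauss-Manin operators $K_j(x)$ provided by Theorem \ref{K/f}.

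First, I would use the isomorphism $\nu(x) : \Sing_a\FF^k(\A(x)) \to \mc O(C_{\A(x),a})$ together with its compatibility with the two bilinear forms: by Theorem \ref{1Mn thm},
\begin{equation*}
S^{(a)}(K_{i_0}(x)v_{i_1,\dots,i_k},\,v_{l_1,\dots,l_k}) = (-1)^k\bigl(\nu(x)(K_{i_0}(x)v_{i_1,\dots,i_k}),\,\nu(x)(v_{l_1,\dots,l_k})\bigr)_{C_{\A(x),a}}.
\end{equation*}
Next I invoke Theorem \ref{K/f}, which asserts $\nu(x)\circ K_{i_0}(x) = (p_{i_0}\ast_x)\circ \nu(x)$, so that $\nu(x)(K_{i_0}(x)v_{i_1,\dots,i_k}) = p_{i_0}\,\nu(x)(v_{i_1,\dots,i_k})$. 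Since the marked elements satisfy $\nu(x)(v_{j_1,\dots,j_k}) = w_{j_1,\dots,j_k} = d_{j_1,\dots,j_k}\,p_{j_1}\cdots p_{j_k}$ by formula \Ref{markeD}, the right-hand side becomes
\begin{equation*}
(-1)^k d_{i_1,\dots,i_k}\,d_{l_1,\dots,l_k}\,(p_{i_0}p_{i_1}\cdots p_{i_k},\,p_{l_1}\cdots p_{l_k})_{C_{\A(x),a}}.
\end{equation*}

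Finally, I apply Theorem \ref{thmQ1}, which converts the residue pairing into $(-1)^k$ times the mixed $(2k+1)$-st derivative of $Q$; the two signs $(-1)^k$ cancel, yielding the claimed identity \Ref{QRF}. There is no real obstacle here: everything is a formal consequence of Theorem \ref{thmQ1}, the ring homomorphism property of $\nu(x)$ inherited through $K_j(x)\leftrightarrow p_j\ast_x$, and the sign discrepancy between $S^{(a)}$ and $(\,,\,)_{C_{\A(x),a}}$. The only point requiring a brief remark is that the bilinearity and symmetry of both forms justify commuting the operator $K_{i_0}(x)$ past $\nu(x)$ exactly once, which is precisely what Theorem \ref{K/f} allows.
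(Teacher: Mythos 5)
Your proposal is correct and is essentially the paper's own argument: the paper's one-line proof cites exactly the same ingredients (formula \Ref{markeD}, Theorem \ref{1Mn thm}, Theorem \ref{K/f}) applied to Theorem \ref{thmQ1}, and your sign bookkeeping — the two factors of $(-1)^k$ cancelling — is right.
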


\begin{proof}
The corollary follows from formula \Ref{markeD} and Theorems \ref{1Mn thm}, \ref{K/f}.
\end{proof}

\bigskip

\end{document}